\documentclass[12pt]{article}
\usepackage{amsmath, amsthm}
\usepackage{amsfonts}
\usepackage{amssymb}
\usepackage[usenames,dvipsnames]{color}
\usepackage{graphicx}
\usepackage{hyperref}
\usepackage{bm}
\usepackage{listings}

\setlength{\oddsidemargin}{0pt}
\setlength{\textwidth}{470pt}%463
\setlength{\marginparsep}{0pt} \setlength{\marginparwidth}{60pt}
\setlength{\topmargin}{20pt} \setlength{\headheight}{0pt}
\setlength{\headsep}{0pt} \setlength{\textheight}{650pt}
\setlength{\footskip}{20pt}

\usepackage{color,calc}

\makeatletter
\definecolor{shade}{gray}{0.8}
        {
          \raggedright
        \setlength{\rightmargin}{\leftmargin}
        \setlength{\itemsep}{-12pt}
        \setlength{\parsep}{20pt}
        \begin{lrbox}{\@tempboxa}%
        \begin{minipage}{\linewidth-2\fboxsep}
        }
        {
        \end{minipage}%
        \end{lrbox}%
        \fcolorbox{black}{shade}{\usebox{\@tempboxa}}\newline
        }%
\makeatother

\newtheorem{proposition}{Proposition}
\newtheorem{theorem}{Theorem}
\newtheorem{lemma}{Lemma}

% processes

% complex numbers

 % imaginary unit

%%% Improvements on align/eqnarray based on the IEEEeqnarray package.

%\newenvironment{eqnarray*}{\begin{IEEEeqnarray*}{rCl}}{\end{IEEEeqnarray*}\ignorespacesafterend}
% accommodate a big LHS on the first line of eqnarr environment

% Link the parentheses around an equation ref.
\renewcommand{\eqref}[1]{\hyperref[#1]{(\ref*{#1})}}

% probability and expectation operators for the STABLE processes

% likewise for STABLE CONDITIONED TO AVOID ZERO

% likewise for the LEVY processes

% likewise for the MARKOV ADDITIVE processes

% MARKOV ADDITIVE repr of stable CONDITIONED TO AVOID ZERO

% WEIRD SPED-UP LAMPERTI REPR of the STABLE PROCESS

% WEIRD SPED-UP LAMPERTI REPR of stable CONDITIONED TO STAY POSITIVE

 % Mellin transform
 % MT of the auxiliary proc

% use for calculus

% shortcuts

% spacing in math mode

% text styling

% rescued

%\IEEEeqnarraydefcolsep{9}{4em}

\newtheorem{remark}{Remark}
\newtheorem{definition}{Definition}

%% turn all lns into logs. remove if we really need a distinct ln!

\newcommand*{\pref}[1]{\hyperref[#1]{(\ref*{#1})}}
\newcommand*{\refpref}[2]{\hyperref[#2]{\ref*{#1}(\ref*{#2})}}

% Requires etoolbox because the alternative is using some weird plain TeX \ifx thing.
% This is used in the bibliography.

% New command
\newcommand{\p}{\mathbb{P}}
\newcommand{\e}{\mathbb{E}}
\newcommand{\ud}{\mathrm{d}}

\newcommand{\R}{\mathbb{R}}%Reales
%F de (Omega,F,P)
%U 
\newcommand{\N}{\mathbb{N}}%Naturales
%Racionales
%enteros
%espacio de la funciones cadlag
%complejos
\newcommand{\Indi}[1]{\mathbf{1}_{\{#1\}}}%Indicadora con corchetes
\newcommand{\Ind}{\mathbf{1}}%Indicadora
%state space
\newcommand{\mbf}[1]{\mathbf{#1}}%bold
\newcommand{\mca}[1]{\mathcal{#1}}%caligrafic
\newcommand{\Exp}[1]{\exp\left\{#1\right\}}%exponencial
 %alertasroja
 %alertasazul

 %Sandra comments

% "Title of the paper"
\title{Extinction properties of multi-type continuous-state branching processes}

% indicate corresponding author with \corref{}
% \author{\fnms{John} \snm{Smith}\corref{}\ead[label=e1]{smith@foo.com}\thanksref{t1}}
% \thankstext{t1}{Thanks to somebody}
% \address{line 1\\ line 2\\ printead{e1}}
% \affiliation{Some University}

\author{  Andreas E. Kyprianou\thanks{Department of Mathematical Sciences, University of Bath, Claverton Down, Bath, BA2 7AY, UK. Email: \texttt{a.kyprianou@bath.ac.uk}} \ and Sandra Palau\thanks{Department of Mathematical Sciences, University of Bath, Claverton Down, Bath, BA2 7AY, UK. Email: \texttt{sp2236@bath.ac.uk}}
}  
\begin{document}

\maketitle

\begin{abstract}
\noindent Recently  in  \cite{BLP}, the notion of a multi-type continuous-state branching process (with immigration) having $d$-types was introduced as a solution to an $d$-dimensional vector-valued SDE. Preceding that, work on affine processes, originally motivated by mathematical finance, in  \cite{DFS} also showed the existence of such processes. See also more recent contributions in this direction due to  \cite{GT} and \cite{CGB}. Older work on multi-type continuous-state branching processes is more sparse but includes   \cite{W} and  \cite{M}, where  only two types are  considered.  In this paper we take a completely different approach and  consider multi-type continuous-state branching process, now allowing for up to a countable infinity of types, defined instead as a super Markov chain with both local and non-local branching mechanisms. In the spirit of \cite{EK} we explore their extinction properties and pose a number of open problems.

\medskip

\noindent {\bf Key words:} Continuous-state branching process, superprocess, multi-type process, local extinction.
\medskip

\noindent {\bf MSC 2000:}  60G99, 60J68, 60J80.
\end{abstract}
%\tableofcontents

\section{Introduction}

Continuous-state branching processes (CSBP) can be seen as high density limits of Bie\-nay\-m\'e--Galton--Watson (BGW) processes. Thanks to their importance as prototypical conti\-nuum (both in space and time) asexual population models, they have been the subject of intensive study since their introduction by Ji\v{r}ina \cite{J}. For a general background on CSBPs see Chapter 12 of \cite{K} or Chapter 3 of \cite{L}, see also the review article of Caballero et al. \cite{CLU}. 

By analogy with multi-type BGW processes,  a natural extension of the class of CSBPs would be to consider a multi-type Markov population model in continuous time which exhibits a branching property. Indeed, in whatever sense they can be defined, multi-type CSBPs (MCSBP) should have the property that the continuum mass of each type reproduces within its own population type in a way that is familiar to a CSBP, but also allows for the migration and/or seeding of mass  into other population types.

Recently  in  \cite{BLP}, the notion of a multi-type continuous-state branching process (with immigration) having $d$-types was introduced as a solution to an $d$-dimensional vector-valued stochastic differential equation (SDE) with both Gaussian and Poisson driving noise. Preceding that, work on affine processes, originally motivated by mathematical finance, in \cite{DFS} also showed the existence of such processes. See also more recent contributions in this direction due to  \cite{GT} and \cite{CGB}. Older work on multi-type continuous-state branching processes is more sparse but includes  \cite{W} and  \cite{M}, where  only two types are  considered.

In this article, we introduce MCSBPs through the medium of  super Markov chains. That is to say we defined MCSBPs as superprocesses whose associated underlying Markov movement generator is that of a Markov chain.
This allows us the possibility of working with a countably infinite number of types. We are interested in particular in the event of extinction and growth rates. Lessons learnt from the setting of super diffusions tells us that, in the case that the number of types is infinite, we should expect to see the possibility that the  total mass may grow arbitrarily large whilst the population of each type dies out; see for example the summary in  Chapter 2 of  \cite{E}. This type of behaviour can be attributed to the notion of transient `mass transfer' through the different types and is only possible with an infinite number of types. In the case that the number of types is finite, we know from the setting of multi-type Bienaym\'e--Galton--Watson  processes (MBGW) that all types grow at the same rate and we expect the same to be true of MCSBPs.

\section{Main results}

Our first main result is to identify the existence of MCSBPs, allowing for   up to a countable infinity of types. Denote by $\N=\{1,2,\cdots\}$ the natural numbers. Let
$\mca{B}(\N)$ be the space of bounded measurable functions on $\N$. Thinking of a member of $\mca{B}(\N)$, say ${f}$, as a vector we will write its entries by $f(i)$, $i\in\mathbb{N}$.   Write $\mathcal{M}(\N)$ the space of finite Borel measures on $\N$, let  $\mca{B}^+(\N)$ the subset of bounded positive functions.

\begin{theorem} \label{MCSBP}
Suppose that 

\begin{align}\label{defpsi}
\psi(i,z)=b(i)z+c(i)z^2+\int_0^{\infty}({\rm e}^{-zu}-1+zu)\ell(i,\ud u), \qquad i\in\N,\quad z\geq0,
\end{align}
where ${b}\in \mca{B}(\N)$, ${c}\in \mca{B}^+(\N)$ and, for each $i\in\N$, $(u\wedge u^2)\ell(i,\ud u)$ is a bounded kernel from $\N$ to $(0,\infty)$. 
Suppose further that 
\begin{align}\label{defzeta}
\phi(i,{f})=-\beta(i)\left[d(i)\langle {f},\, {\pi}_i\rangle+\int_0^{\infty} (1-{\rm e}^{-u\langle {f},\, {\pi}_i\rangle}){\rm n}(i,\ud u)\right], \qquad i\in\N, f\in \mathcal{B}^+(\N),
\end{align}
where $d, \beta\in \mca{B}^+(\N)$, ${\pi}_i$ is a probability distribution on $\N\backslash\{i\}$ (specifically $\pi_i(i) = 0$, $i\in\N$) and, for $i\in\mathbb{N}$, $u {\rm n}(i,\ud u)$ is a bounded kernel from $\N$ to $(0,\infty)$ with
$$d(i) +\int_0^{\infty} u{\rm n}(i,\ud u)\leq 1.$$
Then there exists an $[0,\infty)^\N$-valued strong Markov process ${X}: = ({X}_t,t\geq 0)$, where ${X}_t = (X_t{(1)}, X_t{(2)}, \cdots)$, $t\geq 0$, with probabilities $\{\mbf{P}_{\mu}, {\mu}\in  \mathcal{M}(\N)\}$ such that 
\begin{align}\label{semigroup v}
\mbf{E}_{\mu}[{\rm e}^{-\langle {f},{X}_t\rangle}]=\Exp{-\langle V_t{f},{\mu}\rangle},\quad {\mu}\in  \mathcal{M}(\N), \ {f}\in \mca{B}^+(\N),
\end{align}
where, for $i\in\N$,
\begin{align}\label{ecv}
V_tf(i)=f(i)-\int_0^t\Big[\psi(i,V_{s}{f}(i))+\phi(i,V_{s}{f})\Big] \ud s, \qquad t\geq 0.
\end{align}
\end{theorem}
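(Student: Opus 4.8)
The plan is to realise $X$ as a superprocess over the trivial (constant) spatial motion on $\mathbb{N}$ whose total branching mechanism is $\Phi(i,f):=\psi(i,f(i))+\phi(i,f)$, with the local part $\psi$ of the CSBP (L\'evy--Khintchine) type \eqref{defpsi} and the non-local part $\phi$ carrying all the interaction between types. The absence of a movement semigroup in \eqref{ecv} is exactly consistent with this choice: all migration and seeding of mass between types is encoded in $\phi$, through the deterministic displacement $d(i)\langle f,\pi_i\rangle$ and the non-local offspring term. I would first solve \eqref{ecv} to obtain a cumulant semigroup $(V_t)_{t\ge0}$, then invoke the general existence theory for superprocesses with local and non-local branching (see, e.g., \cite{L}) to produce the Markov process satisfying \eqref{semigroup v}, and finally deal with the path regularity and the conservation of total mass that are specific to a countable infinity of types.

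First I would read \eqref{ecv} as a Banach-space-valued integral equation $V_tf=f-\int_0^t\Phi(\cdot,V_sf)\,\ud s$ in $\mathcal{B}(\mathbb{N})\cong\ell^\infty(\mathbb{N})$. The standing hypotheses---$b,c,\beta,d$ bounded, $(u\wedge u^2)\ell(i,\ud u)$ and $u\,{\rm n}(i,\ud u)$ bounded kernels, and $d(i)+\int_0^\infty u\,{\rm n}(i,\ud u)\le1$---make $f\mapsto\Phi(\cdot,f)$ Lipschitz on bounded subsets of $\mathcal{B}^+(\mathbb{N})$, uniformly in $i$: indeed $\phi$ is globally Lipschitz because $x\mapsto1-{\rm e}^{-ux}$ has Lipschitz constant $u$ and $f\mapsto\langle f,\pi_i\rangle$ has norm at most $1$, while $\psi(i,\cdot)$ has a derivative bounded on each interval $[0,K]$ uniformly in $i$. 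A Picard iteration then yields a unique local solution; nonnegativity is preserved because $\psi(i,0)=0$ and $\phi(i,g)\le0$ whenever $g\ge0$, so the vector field points inward on $\{V=0\}$, and a Gronwall estimate gives an at-most-exponential a priori bound and hence a unique global solution $V_tf\in\mathcal{B}^+(\mathbb{N})$. Uniqueness simultaneously delivers the semigroup property $V_{t+s}=V_tV_s$ and the monotonicity $f\le g\Rightarrow V_tf\le V_tg$.

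The decisive analytic step is to show that $\Exp{-\langle V_tf,\mu\rangle}$ is, for each $t$ and $\mu$, the Laplace functional of a probability measure on $\mathcal{M}(\mathbb{N})$, i.e. that $(V_t)_{t\ge0}$ is a genuine cumulant semigroup rather than a formal solution of \eqref{ecv}. For this I would verify that $(\psi,\phi)$ is an admissible branching mechanism in the sense of the cited theory: $\psi$ is precisely a local branching mechanism with bounded characteristics, and $\phi$ has the standard non-local form, its bracket being the cumulant of an infinitely divisible random measure whose deterministic displacement measure is $d(i)\pi_i$ and whose L\'evy measure is the image of ${\rm n}(i,\ud u)$ under $u\mapsto u\pi_i$ (offspring mass $u$ spread over the other types according to $\pi_i$). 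Since $\mathbb{N}$ with the discrete topology is a Lusin space, the general theorem then produces a Borel right process $X$ on $\mathcal{M}(\mathbb{N})$ with probabilities $\{\mathbf{P}_\mu\}$ obeying \eqref{semigroup v} and \eqref{ecv}; being a right process it is automatically strong Markov, and $\mathcal{M}(\mathbb{N})$ is identified with $[0,\infty)^{\mathbb{N}}$ via $\mu\leftrightarrow(\mu(\{i\}))_{i\in\mathbb{N}}$.

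The hard part, and the only place where the countable infinity of types genuinely bites, is to guarantee that $X$ never leaves $\mathcal{M}(\mathbb{N})$, i.e. that the total mass $\langle\Ind,X_t\rangle$ stays finite and does not escape through infinitely many types in finite time. I would control this by computing the first-moment semigroup from \eqref{semigroup v}: the mean measure solves a linear system whose bounded generator comprises the local rates $b(i)$ together with migration and seeding rates of order $\beta(i)d(i)$ and $\beta(i)\int_0^\infty u\,{\rm n}(i,\ud u)$, and the constraint $d(i)+\int_0^\infty u\,{\rm n}(i,\ud u)\le1$ with uniform boundedness of the coefficients keeps this mean semigroup conservative and at most exponentially large. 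Consequently $\mathbf{E}_\mu[\langle\Ind,X_t\rangle]<\infty$ for all $t$, no mass is lost to infinity, and the same uniform bounds supply the continuity of $(t,f)\mapsto V_tf$ needed for the right-continuity of paths in the general construction.
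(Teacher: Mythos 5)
Your proposal is correct in outline, but it takes a genuinely different route from the paper. The paper proves Theorem \ref{MCSBP} by explicitly constructing multitype branching Markov chains (particles of type $i$ dying at rate $\gamma(i)$ and branching locally with generating function $g$ or non-locally with generating function $h$ and displacement law $\pi_i$), writing the renewal equation for their Laplace functionals, rescaling mass by $k^{-1}$, and then quoting the Dawson--Gorostiza--Li convergence theorem \cite{dagoli} (restated as Theorem \ref{Dawson}), of which Theorem \ref{MCSBP} is an immediate corollary; in particular part (iii) of that theorem is what guarantees that \emph{every} pair $(\psi,\phi)$ of the form \eqref{defpsi}--\eqref{defzeta} actually arises as such a limit. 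You instead solve \eqref{ecv} directly by Picard iteration in $\ell^\infty(\N)$ (your Lipschitz, positivity-preservation and Gronwall estimates are all sound, and the uniform bound $d(i)+\int_0^\infty u\,{\rm n}(i,\ud u)\le 1$ enters exactly where you say it does), and then appeal to the general Lusin-space existence theory for superprocesses with admissible local plus non-local mechanisms in \cite{L} to conclude that $(V_t)_{t\ge0}$ is a genuine cumulant semigroup of a Borel right (hence strong Markov) process. What each approach buys: the paper's scaling-limit construction delivers the particle-system interpretation of the parameters ($\beta(i)d(i)\pi_i(j)$ as infinitesimal seeding rates, ${\rm n}(i,\ud u)$ as lump immigration of mass $u$ spread by $\pi_i$), which the paper uses repeatedly for intuition and which foreshadows the spine decomposition; your construction gives explicit analytic control of \eqref{ecv} (uniqueness, monotonicity, the semigroup property, conservativeness via first moments), which the paper leaves implicit inside the citation. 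One caveat you should make explicit: the ``decisive analytic step'' you identify --- that $\Exp{-\langle V_tf,\mu\rangle}$ is completely monotone in $\mu$, i.e.\ a bona fide Laplace functional --- cannot be extracted from the Picard iteration itself, and in the theory you cite it is proved precisely by an approximation with branching particle systems; so at bottom your route and the paper's converge on the same underlying mechanism, and your proof is only as complete as your verification that $(\psi,\phi)$ satisfies the admissibility hypotheses of \cite{L} (which, under the stated boundedness assumptions, it does).
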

In the above theorem, for ${f}\in\mathcal{B}^+(\N)$ and ${\mu}\in\mathcal{M}(\N)$, we use the notation 
\[
\langle {f},{\mu}\rangle: = \sum_{i\geq 1} f(i)\mu(i).
\]
Equation (\ref{semigroup v}) tells us that $X$ satisfies the branching property: for $\mu_1, \mu_2\in  \mathcal{M}(\N)$, 
\[
\mbf{E}_{\mu_1+\mu_2}[{\rm e}^{-\langle {f},{X}_t\rangle}]  = \mbf{E}_{\mu_1}[{\rm e}^{-\langle {f},{X}_t\rangle}] \mbf{E}_{\mu_2}[{\rm e}^{-\langle {f},{X}_t\rangle}], \qquad t\geq0.
\]
That is to say, $(X, \mbf{P}_{\mu_1+\mu_2})$ is equal in law to the sum of independent copies of $(X, \mbf{P}_{\mu_1})$ and $(X, \mbf{P}_{\mu_2})$. We can also understand the process $X$ to be the natural multi-type generalisation of a CSBP as, for each type $i\in\N$, $X{(i)}$ evolves, in part from a local contribution which is that of a CSBP with mechanism $\psi(i,z)$, but also from a non-local contribution from other types. The mechanism $\phi(i,\cdot)$ dictates how this occurs. Roughly speaking, each type $i\in\N$ seeds an infinitesimally small mass continuously at rate $\beta(i)d(i)\pi_i(j)$ on to sites $j\neq i$ (recall $\pi_i(i) = 0$, $i\in\N$). Moreover, it seeds an amount of mass $u>0$ at rate $\beta(i){\rm n}(i, \ud u)$ to sites $j\neq i$ in proportion given by $\pi_i(j)$. We refer to the processes described in the above theorem as $(\psi,\phi)$ multi-type continuous-state branching processes, or $(\psi,\phi)$-MCSBPs
 for short.
\medskip

Our main results concern how the different types of extinction occur for a MCSBP $X$ as defined above. As alluded to in the introduction, we must distinguish {\it local extinction} at a finite number of sites $A\subset \mathbb{N}$, that is,
\[
\mathcal{L}_A: =\{\lim_{t\to\infty} \langle\mathbf{1}_A, X_t\rangle =0\},
\]
 from global extinction of the process $X$, i.e. the event 
\[
\mathcal{E}: = \{\lim_{t\to\infty}\langle1, {X}_t\rangle =0\}.
\]
The distinction between these two has been dealt with in the setting of super diffusions by \cite{EK}. In this article, we use techniques adapted from that paper to understand local extinction in the setting here. 
The case of global extinction can be dealt with in a familiar way. To this end, denote by $\delta_i$ the atomic measure consisting of a unit mass concentrated at point $i\in \N$.

\begin{lemma}\label{global}For each $i\in\N$, let ${w}$ be the vector with entries
$w(i):=-\log \mbf{P}_{\delta_i}(\mca{E})$, $i\in \mathbb{N}$.
Then  ${w}$ is a non-negative solution to
\begin{equation}
\psi(i,w(i))+\phi(i,{w})=0, \qquad i\in\N.
\label{fixedpoint}
\end{equation} 
\end{lemma}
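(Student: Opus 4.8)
The plan is to identify $w$ as a time-stationary fixed point of the nonlinear semigroup $(V_t)_{t\ge0}$ appearing in \eqref{semigroup v}, and then to read off \eqref{fixedpoint} from the integral equation \eqref{ecv}. First I would show that for every $\mu\in\mca{M}(\N)$ one has $\mbf{P}_{\mu}(\mca{E})=\Exp{-\langle w,\mu\rangle}$. This rests on the branching property recorded just after Theorem~\ref{MCSBP}: writing $(X,\mbf{P}_{\mu_1+\mu_2})$ as a sum of two independent copies with laws $\mbf{P}_{\mu_1}$, $\mbf{P}_{\mu_2}$, the total mass $\langle1,X_t\rangle$ is the sum of the two non-negative total masses, so the whole undergoes global extinction exactly when each copy does. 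Hence $\mca{E}$ factorises and $g_i(x):=\mbf{P}_{x\delta_i}(\mca{E})$ is multiplicative, $g_i(x+y)=g_i(x)g_i(y)$; being $[0,1]$-valued and monotone in $x$, it must be of the form $g_i(x)=\Exp{-x\,w(i)}$ with $w(i)\in[0,\infty]$, and summing over sites gives the claimed identity. Non-negativity of $w$ is immediate from $\mbf{P}_{\delta_i}(\mca{E})\in[0,1]$.

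Next I would exploit that $\mca{E}$ is invariant under time-shift, since the value of $\lim_{t\to\infty}\langle1,X_t\rangle$ is unaffected by the path on any initial interval. Together with the Markov property this yields, for each $i\in\N$ and $t\ge0$, \[ \mbf{P}_{\delta_i}(\mca{E})=\mbf{E}_{\delta_i}\!\left[\mbf{P}_{X_t}(\mca{E})\right]=\mbf{E}_{\delta_i}\!\left[\Exp{-\langle w,X_t\rangle}\right], \] the last step by the first part. Applying \eqref{semigroup v} with $f=w$ identifies the right-hand side as $\Exp{-V_tw(i)}$, while the left-hand side is $\Exp{-w(i)}$ by definition of $w$; hence $V_tw(i)=w(i)$ for all $t\ge0$ and $i\in\N$. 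Substituting $V_sw=w$ into \eqref{ecv}, the integrand $\psi(i,V_sw(i))+\phi(i,V_sw)$ is constant in $s$ and equal to $\psi(i,w(i))+\phi(i,w)$, while the left-hand side $V_tw(i)-w(i)$ vanishes; thus $t\,[\psi(i,w(i))+\phi(i,w)]=0$ for all $t$, forcing \eqref{fixedpoint}. Note that no differentiability of $t\mapsto V_tw(i)$ is required.

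The step that demands care is the use of \eqref{semigroup v}, which is stated for bounded $f\in\mca{B}^+(\N)$, whereas a priori $w(i)=-\log\mbf{P}_{\delta_i}(\mca{E})$ may be infinite at some sites. I would handle this by monotone approximation, applying the above to $w\wedge n$ and letting $n\uparrow\infty$. Monotonicity of $V_t$ comes for free from \eqref{semigroup v}, since $f\le g$ gives $\langle f,X_t\rangle\le\langle g,X_t\rangle$ and hence $V_tf\le V_tg$; consequently $V_t(w\wedge n)(i)$ increases in $n$ to a limit $V_tw(i)\in[0,\infty]$, and by dominated convergence $\mbf{E}_{\delta_i}[\Exp{-\langle w,X_t\rangle}]=\lim_n\Exp{-V_t(w\wedge n)(i)}$, so the fixed-point identity $V_tw(i)=w(i)$ persists for this extended $V_t$. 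Writing \eqref{ecv} for each $w\wedge n$ and letting $n\uparrow\infty$, the left-hand side converges to $w(i)-w(i)=0$, so it remains to pass the limit through $\int_0^t[\psi(i,V_s(w\wedge n)(i))+\phi(i,V_s(w\wedge n))]\,\ud s$. The integrand converges pointwise to $\psi(i,w(i))+\phi(i,w)$ --- using continuity of $\psi(i,\cdot)$ and monotone convergence in the $\langle\cdot,\pi_i\rangle$ and ${\rm n}(i,\ud u)$ terms of $\phi$ --- and justifying the interchange of this limit with the time integral (via a dominating bound coming from the boundedness of the coefficients $b,c,\ell,\beta,d,{\rm n}$) is the main obstacle; once done, $t\,[\psi(i,w(i))+\phi(i,w)]=0$ follows and the proof concludes as before.
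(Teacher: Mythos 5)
Your proposal is correct and takes essentially the same approach as the paper: the branching property yields $\mbf{P}_{\mu}(\mca{E})=\exp\{-\langle w,\mu\rangle\}$, conditioning $\mca{E}$ on $\mca{F}_t$ (shift-invariance plus the Markov property) yields $V_tw=w$, and substituting into \eqref{ecv} forces $\psi(i,w(i))+\phi(i,w)=0$. Even your truncation $w\wedge n$ for possibly unbounded $w$ mirrors the paper's device $v_A^K=K\wedge v_A$ in the proof of the analogous identity \eqref{ecw}, to which the paper's one-line proof of Lemma \ref{global} defers.
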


For the case of local extinction, a more sophisticated notation is needed. First we must introduce the notion of the linear semigroup. For each ${f}\in\mathcal{B}(f) $, define the linear semigroup $({\mathcal M}_t,t\geq 0)$ by
$${\mathcal M}_tf(i):=\mbf{E}_{\delta_i}[\langle {f},{X}_t\rangle ], \qquad t\geq 0.$$
Define the matrix ${M}(t)$ by
$${M}(t)_{ij}:=\mbf{E}_{\delta_i}[X_t(j)], \qquad t\geq 0,$$
and observe that
${\mathcal M}_t[f](i)=[{M}(t)f](i),$ for $t\geq 0$ and ${f}\in\mathcal{B}(\N)$.
The linear semigroup and its spectral properties play a crucial role in determining the limit behavior of the MCSBP.
In what follows, we need to assume that ${M}(t)$ is \textit{irreducible} in the sense that, for any $i,j\in\N$, there exists $t>0$ such that $M(t)_{ij}>0$. To this end, we make the following global assumption throughout the paper, which ensures irreducibility of ${M}(t)$, $t\geq 0$.

\bigskip

\noindent {\bf (A):} {\it The matrix $\pi_i(j)$, $i,j\in\N$, is the transition matrix of an irreducible Markov chain.}

\bigskip

For each $i,j\in \N$, and $\lambda\in \R$ we define the matrix $H(\lambda)$ by 
$${H}_{ij}(\lambda):=\int_0^{\infty} {\rm e}^{\lambda t} {M}(t)_{ij} \ud t.$$ 
%We say that the linear  semigroup $\{{\mathcal M}_t : t\geq 0\}$ is $\lambda$\textit{-transient} if  $$H_{ij}(\lambda)<\infty, \qquad \mbox{ for some }\ i,j\in\N,$$ otherside it is called \textit{recurrent} for ${M}$. 
The following result is the analogue of a result proved for linear semigroups of MBGW processes; see e.g. Niemi and Nummelin (\cite{ninu}, Proposition 2.1) or Lemma 1 of \cite{Moy}. In light of this, its proof is straightforward omitted for the sake of brevity.
\begin{lemma}\label{spectralradius} If, for some $
\lambda$,  ${H}_{ij}(\lambda)<\infty$ for a pair $i,j$, then ${H}_{ij}(\lambda)<\infty$ for all $i,j\in\N$.
In particular, the parameter 
$$\Lambda_{ij}=\sup\{\lambda\geq -\infty: {H}_{ij}(\lambda)<\infty\},$$
 does not depend on $i$ and $j$. The common value, $\Lambda=\Lambda_{ij}$, is called the spectral radius of $M$.  
 %{\color{red} Is this somebody elses result? Can we just tell the reader to check out the discrete case, e.g. \cite{Moy}?}
\end{lemma}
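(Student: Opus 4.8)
The plan is to reproduce, in the present superprocess setting, the classical resolvent-irreducibility argument used for the mean semigroups of MBGW processes in \cite{Moy} and \cite{ninu}. Two structural facts will drive everything. First, I would record that the mean matrix is multiplicative: from the Markov property of $X$ together with the branching property one has, for $f\in\mathcal{B}^+(\N)$ and $\mu\in\mathcal{M}(\N)$, that $\mbf{E}_{\mu}[\langle f,X_s\rangle]=\langle \mathcal{M}_sf,\mu\rangle$, whence $\mathcal{M}_{t+s}=\mathcal{M}_t\mathcal{M}_s$; in matrix form this reads $M(t+s)_{kl}=\sum_{m}M(t)_{km}M(s)_{ml}$ for all $k,l\in\N$. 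Second, by assumption (A) the matrix $M(t)$ is irreducible, so for every pair $a,b\in\N$ there is some $t>0$ with $M(t)_{ab}>0$. These are the only inputs I would need.

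The heart of the argument is a single-term lower bound. Fixing $\lambda$ and assuming $H_{ij}(\lambda)<\infty$ for the given pair $(i,j)$, let $(k,l)\in\N\times\N$ be arbitrary. Using irreducibility I would choose $s_1,s_2>0$ with $M(s_1)_{ik}>0$ and $M(s_2)_{lj}>0$, and then retain a single term in the multiplicative identity $M(s_1+t+s_2)=M(s_1)M(t)M(s_2)$ to obtain
\[
M(s_1+t+s_2)_{ij}\ \geq\ M(s_1)_{ik}\,M(t)_{kl}\,M(s_2)_{lj},\qquad t\geq 0.
\]
Multiplying by ${\rm e}^{\lambda t}$, integrating over $t\in[0,\infty)$, and substituting $\tau=s_1+t+s_2$ would then give
\[
H_{ij}(\lambda)\ \geq\ \int_{s_1+s_2}^{\infty}{\rm e}^{\lambda\tau}M(\tau)_{ij}\,\ud\tau\ \geq\ {\rm e}^{\lambda(s_1+s_2)}M(s_1)_{ik}\,M(s_2)_{lj}\,H_{kl}(\lambda).
\]
Since the prefactor ${\rm e}^{\lambda(s_1+s_2)}M(s_1)_{ik}M(s_2)_{lj}$ is strictly positive, finiteness of $H_{ij}(\lambda)$ forces $H_{kl}(\lambda)<\infty$; as $(k,l)$ is arbitrary this establishes the first assertion.

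For the statement about $\Lambda$, I would note that $\lambda\mapsto H_{ij}(\lambda)$ is nondecreasing because $M(t)_{ij}\geq 0$, so each set $S_{ij}:=\{\lambda: H_{ij}(\lambda)<\infty\}$ is a left half-line with right endpoint $\Lambda_{ij}$. The transfer just proved says that at each fixed $\lambda$, finiteness for one pair yields finiteness for every pair, so $S_{ij}\subseteq S_{kl}$ for all pairs; exchanging the roles of $(i,j)$ and $(k,l)$ gives the reverse inclusion, hence all the $S_{ij}$ coincide and their common supremum $\Lambda$ is independent of $i,j$.

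The computational steps here are routine, so the only genuinely delicate point I anticipate is the passage to a countably infinite type space, where — unlike the finite-$d$ case — one must be sure that the defining sums and integrals may be freely rearranged and that the entries $M(t)_{ij}$ are finite in the first place. Both concerns are mild: every quantity involved is nonnegative, so Tonelli's theorem and monotone convergence legitimise the interchange of summation and integration and the manipulation of $M(s_1)M(t)M(s_2)$, while the boundedness hypotheses on $b,c,\ell$ and on $d,\beta,{\rm n}$ in Theorem~\ref{MCSBP} guarantee that the mean matrix has finite entries. With these observations in place the proof is complete, which is why it may fairly be omitted.
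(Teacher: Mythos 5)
Your proof is correct, and it is precisely the classical resolvent-irreducibility argument (semigroup multiplicativity plus a single-term lower bound $M(s_1+t+s_2)_{ij}\geq M(s_1)_{ik}M(t)_{kl}M(s_2)_{lj}$) that the paper alludes to when it cites Niemi--Nummelin and Moy and omits the proof for brevity. Your attention to the countable-type-space issues (Tonelli for nonnegative terms, finiteness and measurability of the entries $M(t)_{ij}$) is exactly what makes the omission legitimate, so there is nothing to add.
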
 
%Thanks to this lemma, we can now say that the semigroup $M$ is $\lambda${\it -transient} if $H_{ij}(\lambda)<\infty$ for some, and then all, $i,j\in\N$. Otherwise, we say that $M$ is $\lambda${\it -recurrent} if $H_{ij}(\lambda)=\infty$ for some, and then all, $i,j\in\N$. When $\Lambda<\infty$, it may be the case that $M$ is $\Lambda$-transientor $\Lambda$-recurrent.

In contrast to Lemma \ref{global}, which shows that global extinction depends on the initial configuration of the MCSBP through the non-linear functional fixed point equation (\ref{global}), case of local extinction on any finite number of states depends only on the spectral radius $\Lambda$. In particular local extinction for finite sets is not a phenomenon that is set-dependent.
 
\begin{theorem}[Local extinction dichotomy]\label{localextintion} Fix ${\mu}\in \mathcal{M}(\mathbb{N})$ such that $\sup\{n: \mu(n)>0\}<\infty$. Moreover suppose that 
\begin{equation}
\int_1^\infty (x\log x) \ell(i,{\rm d}x)  + \int_1^\infty (x\log x) {\rm n}(i,{\rm d}x) <\infty, \qquad \mbox{ for all } i\in \N
\label{globalxlogx}
\end{equation}
holds.
\begin{description}
\item[(i)] For any finite number of  states $A\subseteq \mathbb{N}$,  $\mbf{P}_{\mu}(\mathcal{L}_A) =1 $ if and only if $\Lambda\geq 0$. 
\item[(ii)] For any finite number of states $A\subseteq \mathbb{N}$,  let ${v}_A$ be the vector with entries $v_A(i) = -\log \mathbf{P}_{\delta_i}(\mathcal{L}_A)$, $i\in\mathbb{N}$, Then $v_A$ is a solution to \eqref{fixedpoint}, and $v_A(i)\leq w(i)$ for all $i\in\N$. 
\end{description}
\end{theorem}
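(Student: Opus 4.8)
The plan is to treat the two parts in the reverse of their stated order, since the fixed-point characterisation in (ii) is the cleaner statement and exposes the structure underlying the dichotomy in (i).

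For part (ii), I would first record that $\mathcal{L}_A$ is a tail event, invariant under time-shifts of the trajectory. The branching property \eqref{semigroup v} extends from Laplace functionals to such tail events, giving $\mathbf{P}_\mu(\mathcal{L}_A) = \exp(-\langle v_A,\mu\rangle)$ for every $\mu\in\mathcal{M}(\mathbb{N})$, where $v_A(i) = -\log\mathbf{P}_{\delta_i}(\mathcal{L}_A)$; in particular $v_A$ is well defined and non-negative. Applying the Markov property at a fixed time $t$ and using shift-invariance of $\mathcal{L}_A$,
\[
\mathrm{e}^{-v_A(i)} = \mathbf{P}_{\delta_i}(\mathcal{L}_A) = \mathbf{E}_{\delta_i}\big[\mathbf{P}_{X_t}(\mathcal{L}_A)\big] = \mathbf{E}_{\delta_i}\big[\mathrm{e}^{-\langle v_A, X_t\rangle}\big] = \mathrm{e}^{-V_t v_A(i)},
\]
so that $V_t v_A = v_A$ for all $t\geq 0$. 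Differentiating \eqref{ecv} at $t=0$ then forces the integrand to vanish, i.e. $\psi(i,v_A(i)) + \phi(i,v_A) = 0$, which is \eqref{fixedpoint}. Finally, since $\mathbf{1}_A\leq 1$ we have $\mathcal{E}\subseteq\mathcal{L}_A$, whence $\mathbf{P}_{\delta_i}(\mathcal{E})\leq\mathbf{P}_{\delta_i}(\mathcal{L}_A)$ and therefore $v_A(i)\leq w(i)$ for every $i$. The only point requiring care is the passage of the branching property to the event $\mathcal{L}_A$, which I would justify by writing $\mathbf{1}_{\mathcal{L}_A}$ as a monotone limit of bounded multiplicative functionals of the form $\exp(-\theta\langle\mathbf{1}_A, X_t\rangle)$ and appealing to \eqref{semigroup v}.

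For part (i), the engine is the linear semigroup $M(t)$ and its spectral radius $\Lambda$. Writing $\rho := -\Lambda$, assumption (A) together with Lemma \ref{spectralradius} identifies $\rho$ as the common exponential growth rate of every entry $M(t)_{ij}$, and I would first produce a strictly positive $\rho$-invariant function $h$, that is $M(t)h = \mathrm{e}^{\rho t}h$, from the Perron--Frobenius / $R$-theory attached to the resolvent $H(\lambda)$ at $\lambda=\Lambda$. The process $W_t := \mathrm{e}^{-\rho t}\langle h, X_t\rangle$ is then a non-negative martingale, and the moment hypothesis \eqref{globalxlogx} is precisely the $x\log x$ condition guaranteeing, via a Kesten--Stigum-type argument, that $W_t$ converges to a limit $W_\infty$ with $\mathbf{E}_\mu[W_\infty] = \langle h,\mu\rangle$, so that $\{W_\infty>0\}$ has positive probability whenever $\mu\neq 0$. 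With this martingale in hand the dichotomy splits. When $\Lambda>0$, i.e. $\rho<0$, the identity $\langle h, X_t\rangle = \mathrm{e}^{\rho t}W_t$ and convergence of $W_t$ give $\langle h, X_t\rangle\to 0$; since $A$ is finite, $c_A := \min_{j\in A}h(j) > 0$, so $\langle\mathbf{1}_A, X_t\rangle \leq c_A^{-1}\langle h, X_t\rangle \to 0$ and $\mathbf{P}_\mu(\mathcal{L}_A) = 1$. The critical case $\Lambda = 0$ is more delicate, since this bound only shows $\langle\mathbf{1}_A, X_t\rangle$ is bounded; there I would argue a.s.\ local extinction separately, exploiting the fact that the $h$-weighted mass is critical and spreads through the type space. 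Conversely, when $\Lambda<0$, i.e.\ $\rho>0$, on $\{W_\infty>0\}$ the weighted mass grows like $\mathrm{e}^{\rho t}$, and I would establish local survival on $A$, hence $\mathbf{P}_\mu(\mathcal{L}_A)<1$, via the spine decomposition induced by the change of measure $\mathrm{d}\tilde{\mathbf{P}}/\mathrm{d}\mathbf{P}|_{\mathcal{F}_t} = W_t/\langle h,\mu\rangle$.

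The main obstacle is exactly this last step: translating growth of the \emph{global} $h$-weighted mass into survival of the \emph{local} mass on the finite set $A$, against the backdrop of the mass-transfer phenomenon noted in the introduction, whereby total mass can grow while each individual type dies out. Controlling this requires the full force of the per-entry interpretation of $\Lambda$ from Lemma \ref{spectralradius}, which ensures that the mean mass on $A$ itself grows at rate $\rho$ and so rules out escape of all mass to infinity in the mean, together with the uniform integrability supplied by \eqref{globalxlogx} to realise that growth pathwise on a set of positive probability. The remaining reduction from a single starting point $\delta_i$ to a general finitely supported $\mu$, and the equivalence that $v_A$ vanishes at one site if and only if it vanishes at all sites, then follow from irreducibility and the branching identity $\mathbf{P}_\mu(\mathcal{L}_A)=\exp(-\langle v_A,\mu\rangle)$ established in part (ii).
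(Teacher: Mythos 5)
Your part (ii) follows the paper's own route almost verbatim: the branching property gives $\mathbf{P}_\mu(\mathcal{L}_A)=\exp\{-\langle v_A,\mu\rangle\}$, conditioning $\mathcal{L}_A$ on $\mathcal{F}_t$ gives $\mathbf{E}_\mu[{\rm e}^{-\langle v_A,X_t\rangle}]=\exp\{-\langle v_A,\mu\rangle\}$ for all $t$, i.e.\ $V_tv_A=v_A$, and \eqref{ecv} then forces $\psi(i,v_A(i))+\phi(i,v_A)=0$; the inclusion $\mathcal{E}\subseteq\mathcal{L}_A$ gives $v_A\leq w$. (The paper adds one technical step you omit: $v_A$ need not be bounded, so it works with $v_A^K=K\wedge v_A$ and passes to the limit by monotonicity and continuity.) Your treatment of $\Lambda>0$ in part (i) also matches the paper, with one correction: you should not insist on a right \emph{invariant} $\Lambda$-vector for the full semigroup --- Proposition \ref{invariant} guarantees one only when $H_{ij}(\Lambda)=\infty$ --- but a positive \emph{subinvariant} $\Lambda$-vector always exists (Propositions \ref{subinvariant} and \ref{invariant} together), and the resulting supermartingale $W_t={\rm e}^{\Lambda t}\langle x,X_t\rangle$ of Proposition \ref{martingale} is all that case needs.

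The two remaining cases contain genuine gaps. For $\Lambda=0$ you offer no argument at all (``argue a.s.\ local extinction separately'' is a placeholder, not a proof); the paper closes this case with Lemma \ref{limsup}, a zero--infinity dichotomy: for finite $A$, $\limsup_{t\to\infty}\langle\mathbf{1}_A,X_t\rangle\in\{0,\infty\}$ almost surely, proved via infinite divisibility of $X_1(i)$ and the extended Borel--Cantelli lemma. Since the supermartingale $\langle x,X_t\rangle$ converges a.s.\ to a finite limit, the $\limsup$ is finite, hence zero; without such a dichotomy, boundedness of the weighted mass gives nothing. For $\Lambda<0$, your plan --- change measure with the global martingale of the \emph{untruncated} process and deduce local survival from uniform integrability --- is the step that would fail, for exactly the reason you flag as ``the main obstacle'' and then do not overcome. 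Even if the global martingale converged in $L^1$, positivity of $W_\infty$ only says that the $h$-weighted \emph{total} mass survives, and with countably many types this is compatible with every single type dying out (the mass-transfer phenomenon); mean growth of $M(t)_{ij}$ plus uniform integrability of $W$ yields no pathwise conclusion about $\langle\mathbf{1}_A,X_t\rangle$. Moreover the ingredients you invoke need not exist for the full process: the invariant vector requires $H_{ij}(\Lambda)=\infty$, the spine would be a chain on all of $\N$ and could be transient (never returning to $A$), and \eqref{globalxlogx} is a per-type condition, not uniform in $i$, so no Kesten--Stigum argument for the untruncated countable-type process is available. The paper's key idea, which your proposal is missing, is truncation: by Lemma \ref{lambdainfinito}, $\Lambda^{[n]}\downarrow\Lambda$, so $\Lambda^{[n]}<0$ for some finite $n$ covering $A$ and the support of $\mu$; for the killed process $X^{[n]}$ on the finite type space $[n]$ the Perron--Frobenius eigenvector exists, the spine of Theorem \ref{gamma} is an ergodic chain on $[n]$ which deposits mass at every state of $A$ infinitely often, condition \eqref{globalxlogx} reduces to the finite sum \eqref{xlogx}, and Theorem \ref{mgcgcethrm} gives $L^1$ convergence of $Y^{[n]}$, hence $\widetilde{\mathbf{P}}^{[n]}_\mu\ll\mathbf{P}_\mu$ and local survival of $X^{[n]}$ --- hence of $X$, which dominates it --- with positive $\mathbf{P}_\mu$-probability.
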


\begin{remark}
 As we will see in the proof, if $\Lambda\geq 0$, then the process has local extinction a.s. even if \eqref{globalxlogx} is not satisfied.
\end{remark}

The results in this paper open up a number of questions for the MCSBP which are motivated by similar issues that emerge in the setting of CSBPs and  super diffusions.  For example, by analogy with the setting for super diffusions, under the assumption (\ref{globalxlogx}), we would expect that when $\Lambda<0$, the quantity $-\Lambda$ characterises the growth rate of individual types. Specifically we conjecture that, when local extinction fails,  $\exp\{\Lambda t\}X_t(i)$ converges almost surely to a non-trivial limit as $t\to\infty$, for each $i\in\N$. Moreover,  if the number of types is finite, then $-\Lambda$ is also the growth rate of the total mass. That is to say  $\exp\{\Lambda t\}\langle 1, X_t\rangle$ converges almost surely to a non-trivial limit as $t\to\infty$. If the total number of types is infinite then one may look for a  discrepancy between the global growth rate and local growth rate. In the setting of super diffusions, \cite{ERS} have made some progress in this direction. Referring back to classical theory for CSBPs, it is unclear how the event of extinction occurs, both locally and globally. Does extinction occur as a result of mass limiting to zero but remaining positive for all time, or does mass finally disappear after an almost surely finite amount of time? Moreover, how does the way that extinction occur for one type relate to that of another type? An irreducibility property of the type space, e.g. assumption (A), is likely to ensure that mass in all states will experience extinction in a similar way with regard to the two types of extinction described before, but this will not necessarily guarantee that global extinction behaves in the same way as local extinction. We hope to address some of these questions in future work.

\bigskip
 
We complete this section by giving an overview of the remainder of the paper. In the next section we give the construction of MCSBPs as a scaling limit of MBGW processes; that is to say, in terms of  branching Markov chains. We define the linear semigroup associated to the MCSBP. The so-called spectral radius of this linear semigroup will have an important role in the asymptotic behaviour of our process, in particular, it will determine the phenomenon of local extinction. The properties of the linear semigroup are studied in Section \ref{msemigroup1}. In Sections \ref{spine1} and \ref{spine2} we develop some standard tools based around a spine decomposition. In this setting, the spine is a Markov chain and we note in particular that the non-local nature of the branching mechanism induces a new additional phenomenon in which a positive, random amount of mass immigrates off the spine  each time it jumps from one state to another. Moreover, the distribution of the immigrating mass depends on where the spine jumped from and where it jumped to. Concurrently to our work we learnt that this phenomenon was also observed recently by Chen, Ren and Song \cite{CRS}.
In Section \ref{proofs}, we give the proof of the main results. We note that the main agenda for the proof was heavily influenced by the proof of local extinction in \cite{EK} for super diffusions. Finally in Section \ref{sect:examples}, we provide examples to illustrate the local phenomenon property.

\section{MCSBPs as a superprocess}

Our objective in this section  is to prove Theorem \ref{MCSBP}.  The proof is not novel as we do this by showing that MCSBPs can be seen in, in the spirit of the theory of superprocesses, as the scaling limits of MBGW processes  with type space $\N$ (or just $\{1,\cdots, n\}$ for some $n\in\N$ in the case of finite types). 

To this end, let $\gamma\in \mca{B}^+(\N)$  and let $F(i,\ud\nu)$ be a Markov kernel from $\N$ to  $\mathcal{I}(\N)$, the space of finite integer-valued  measures, such that
\begin{align*}
\underset{i\in\N}{\sup}\int_{\mathcal{I}(\N)} \nu(1) F(i,\ud\nu)<\infty.
\end{align*}
A branching particle system is described by the following properties:
\begin{enumerate}
	\item For a particle of type  $i\in\N$, which is alive at time $r\geq 0$, the conditional probability of survival during the time interval $[r,t)$ is $\rho_i(r,t):=\exp\{-\gamma(i)(t-r)\}$, $t\geq r$.
	\item When a particle of type $i$ dies, it gives birth to a random number of offspring in $\N$ according to the probability kernel $F(i,\ud\nu)$. 
 
\end{enumerate}

We also assume that the lifetime and the branching of different particles are independent. Let $X_t(B)$
%\footnote{If $\mu$ is a measure on $\N$ and $B=\{i\},$ we will denote $\mu(\{i\})$ by $\mu(i)$.} 
denote the number of particles in $B\in\mathcal{B}(\N)$ that are alive at time $t\geq 0$ and assume $X_0(\N)<\infty$. 
With a slight abuse of notation, we  take $X_0 : = \mu$, where ${\mu}\in\mathcal{I}(\N)$.
Then $\{X_t:t\geq 0\}$ is a Markov process with state space $\mathcal{I}(\N)$, which will be referred as a \textit{branching Markov chain} or \textit{multi-type BGW} with parameters $(\gamma,F)$.
For ${\mu}\in \mathcal{I}(\N)$, let $\mbf{P}_{\mu}$ denote the  law of $\{X_t: t\geq 0\}$ given $X_0=\mu$. In the special case that $X$ is issued with a single particle of type $i$, we write its law by $\mathbf{P}_{\delta_i}$. For ${f}\in B^+(\N)$, $t\geq 0$, $i\in\N$, put
$$u_t(i):=u_t(i,f)=-\log \mbf{E}_{\delta_i}[\Exp{-\langle {f},{X}_t\rangle}].$$
The independence hypothesis implies that
\begin{align}\label{semigrupo u}
\mbf{E}_{\mu}[\Exp{-\langle {f},{X}_t\rangle}]=\Exp{-\langle u_t,{\mu}\rangle}, \qquad {\mu}\in\mathcal{I}(\N),\ {f}\in\mathcal{B}^+(\N),\ t\geq 0.
\end{align}
Moreover, by conditioning on the first branching event, $u_t$ is determined by the renewal equation
$${\rm e}^{-u_t(i)}=\rho_i(0,t){\rm e}^{-f(i)}+\int_0^t \rho_i(0,s)\gamma(i)\int_{\mathcal{I}(\N)}{\rm e}^{-\langle u_{t-s},\ \nu\rangle }F(i,\ud\nu) \ud s.$$
By a standard argument (see for example Lemma 1.2 in Chapter 4 of  in \cite{dyn02}) one sees that the last equation is equivalent to
\begin{align}\label{ut}
{\rm e}^{-u_t(i)}=&{\rm e}^{-f(i)}-\int_0^t \gamma(i){\rm e}^{-u_{t-s}(i)} \ud s+\int_0^t \gamma(i)\int_{\mathcal{I}(\N)}{\rm e}^{-\langle u_{t-s},\ \nu\rangle}F(i, \ud\nu)\ud s.
\end{align}
 See, for example, Asmussen and Hering \cite{ashe} or Ikeda et al. \cite{iknawa1,iknawa2,iknawa3} for similar constructions.

 In preparation for our scaling limit, it is convenient to treat the offspring that start their motion from the death sites of their parents separately from others. To this end, we introduce some additional parameters. Let $\alpha$ and $\beta\in \mca{B}^+(\N)$ such that $\gamma=\alpha+\beta$. For each $i\in \N$, let ${\pi}_i$ be a probability distribution in $\N\setminus\{i\}$ and let
$g, h$ be two positive measurable functions from $\N\times [-1,1]$ to $\mathbb{R}$ such that, for each $i\in\N$,
$$g(i,z)=\underset{n=0}{\overset{\infty}{\sum}}p_n(i)z^n,\qquad h(i,z)=\underset{n=0}{\overset{\infty}{\sum}}q_n(i)z^n  \qquad |z|\leq 1,$$
are probability generating functions with $\sup_{i}g_z'(i,1-)<\infty$ and 
$\sup_{i}h_z'(i,1-)<\infty$.
Next, define the probability kernels $F_0(i,d\nu)$ and $F_1(i,d\nu)$ from $\N$ to $\mathcal{I}(\N)$ by
$$\int_{\mathcal{I}(\N)}{\rm e}^{-\langle {f},\ \nu\rangle }F_0(i,\ud\nu)=g(i, {\rm e}^{-f(i)})$$
and
$$\int_{\mathcal{I}(\N)}{\rm e}^{-\langle {f},\ \nu\rangle }F_1(i,\ud\nu)=h(i,\langle {\rm e}^{-f}, {\pi}_i\rangle).$$
 We replace the role of  $F(i,d\nu)$ by
$$\gamma^{-1}(i)\left[\alpha(i)F_0(i,\ud\nu)+\beta(i)F_1(i,\ud\nu)\right], \qquad i\in\mathbb{N}, \nu\in\mathcal{I}(\N).$$

Intuitively, when a particle of type   $i\in\N$ splits, the branching is of local type with probability $\alpha(i)/\gamma(i)$ and is of non-local type with probability $\beta(i)/\gamma(i)$. If branching is of  a local type, the distribution of the offspring number is $\{p_n(i)\}$. If branching is of a non-local type, the particle gives birth to a random number of offspring according to the distribution $\{q_n(i)\}$, and those offspring choose their locations in $\N\setminus\{i\}$ independently of each other according to the distribution $\pi_i(\cdot)$. Therefore, $u_t$ is determined by the renewal equation
\begin{align}\label{ut2}
{\rm e}^{-u_t(i)}={\rm e}^{-f(i)}&+\int_0^t \alpha(i)\Big[g(i,{\rm e}^{-u_{t-s}(i)})-{\rm e}^{-u_{t-s}(i)} \Big]
 \ud s\nonumber\\ 
 &+\int_0^t \beta(i)\Big[h(i,\langle {\rm e}^{-u_{t-s}}, \pi_{i}\rangle)-{\rm e}^{-u_{t-s}(i)} \Big] \ud s.
\end{align}
For the forthcoming analysis, it is more convenient to work with 
$$v_t(i):=v_t(i,f)=1-\exp\{-u_t(i,f)\}, \qquad t\geq0, i\in\mathbb{N}.$$
In that case, 
\begin{align*}
v_t(i)=\e_i\left[1-{\rm e}^{f(i)}\right]-\int_0^t\left[ \psi(i,v_{t-s}(i))+\phi(i,v_{t-s})\right]\ud s, 
\end{align*}
where 
$$\psi(i,z)=\alpha(i)[g(i,1-z)-(1-z)]+\beta(i)z$$
and
$$\phi(i,f)=\beta(i)\left[h(i,1-\langle {f},{\pi}_i\rangle)-1\right].$$

%%%%%%%%%%%%%%%%%%%%%%%%%%%%%%%%%%%%%%%%%%%%%%%%%%%%%%%%%%%%%%%%%%%%%%%%%%%%%%%%%%%%%%%%%%%%%%%%%%%%%%%%%%%%%%%%%%%%%%%%%%%%%%%%%%%%%%%%%%%%%%%%%%%%%%%%%%%%%%%%%%%%%%%%%%%%%%%%%%%%%%% Section 2 super markov chains

Next, we take a scaling limit of the MBGW process. We treat the limit as a superprocess with local and non-local branching mechanism. For each $k\in\N$, let $\{Y^{(k)}(t),t\geq 0\}$ be a sequence of branching particle system determined by $(\alpha_k(\cdot),\beta_k(\cdot),g_k(\cdot),h_k(\cdot),\pi_\cdot)$. Then, for each $k$,
$$\{X^{(k)}(t)=k^{-1}Y^{(k)}(t),\quad t\geq 0\}$$
defines a Markov process in $N_k(\N):=\{k^{-1}\sigma, \sigma\in \mathcal{I}(\N)\}$. For $0\leq z\leq k$ and ${f}\in \mca{B}(\N)$, let
$$\psi_k(i,z)=k\alpha_k(i)[g_k(i,1-z/k)-(1-z/k)]+\beta_k(i)z$$
and
$$\phi_k(i,f)=\beta_k(i)k[h_k(i,1-k^{-1}\langle {f},{\pi}_i\rangle)-1].$$
Let denote by $u_t^k(i,f)=-\log \mbf{E}_{\delta_i}[\Exp{-\langle {f},{X}_t^k\rangle}]$ and $v_t^k(i,f)=1-\exp\{-u_t^k(i,f)\}$.

Under certain conditions, Dawson et. al \cite{dagoli} obtained the convergence of $\{X^{(k)}(t), t\geq 0\}$ to some process $\{X(t), t\geq 0\}$.   Let  $\overline{\mca{B}}(\N)$ be the subset ${\mca{B}}(\N)$ with entries uniformly bounded from above and below. We re-word their result for our particular setting here.

\begin{theorem}\label{Dawson}
Suppose that
$$\underset{n=0}{\overset{\infty}{\sum}}nq_n^{k}(i)\leq 1,$$
that $\beta_k \rightarrow \beta\in \mca{B}^+(\N)$ uniformly, $\phi_k(i,f)\rightarrow \phi(i,f)$ uniformly on $\N\times\overline{\mca{B}}(\N)$, and $\psi(i,z)\rightarrow\psi(i,z)$ locally uniformly. Then
\begin{enumerate}
\item[i)] The function $\psi(i,z)$ has representation
\begin{align}\label{Ddefphi}
\psi(i,z)=b(i)z+c(i)z^2+\int_0^{\infty}({\rm e}^{-zu}-1+zu)\ell(i,\ud u), \qquad i\in\N,\quad z\geq0,
\end{align}
where $b\in \mca{B}(\N)$, $c\in \mca{B}^+(\N)$ and $(u\wedge u^2)\ell(i,\ud u)$ is a bounded kernel from $\N$ to $(0,\infty)$.

\item[ii)] The function $\phi(i,f)$ can be represented as
\begin{align}\label{Ddefpsi}
\phi(i,f)=-\beta(i)\left[d(i)\langle {f},\, {\pi}_i\rangle+\int_0^{\infty} (1-{\rm e}^{-u\langle {f},\, {\pi}_i\rangle}){\rm n}(i,\ud u)\right], 
\end{align}
where $d\in \mca{B}^+(\N)$, and $u {\rm n}(i,\ud u)$ is a bounded kernel from $\N$ to $(0,\infty)$ with
$$d(i) +\int_0^{\infty} u{\rm n}(i,\ud u)\leq 1.$$
\item[iii)] To each function $\psi$ and $\phi$ satisfying (\ref{Ddefphi}) and (\ref{Ddefpsi}) there correspond a sequence of $\beta_k$, $\psi_k$ and $\phi_k$.
\item[v)] For each $a\geq 0$, the functions $v_t^{k}(i,f)$ and $ku_t^{(k)}(i,f)$ converge boundedly and uniformly on $[0,a]\times\N\times \overline{\mca{B}}(\N)$, to the unique bounded positive solution $V_tf(i)$ to the evolution equation
\begin{align}\label{boundedevolution}
V_tf(i)=f(i)- \int_0^t\Big[\psi(i,V_{t-s}f(i))+\phi(i,V_{t-s}f)\Big]{\rm d}s, \qquad t\geq 0 .
\end{align}
\end{enumerate}

Moreover, there exists a Markov process $\{X_t:t\geq 0\}$ with probabilities $\{\mbf{P}_{\mu}, {\mu}\in  \mathcal{M}(\N)\}$ such that 
\begin{align*}
\mbf{E}_{\mu}[{\rm e}^{-\langle {f},{X}_t\rangle}]=\Exp{-\langle V_t{f},{\mu}\rangle},\quad {\mu}\in  \mathcal{M}(\N), \ {f}\in \mca{B}^+(\N),
\end{align*}
and the cumulant semigroup  $V_tf$ is given by \eqref{boundedevolution}.
\end{theorem}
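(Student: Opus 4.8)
The plan is to treat Theorem~\ref{Dawson} as a specialisation of the general superprocess scaling-limit theory of Dawson et al.\ \cite{dagoli}: the substantive work is to check that the rescaled branching particle systems $\{X^{(k)}\}$ fall within their framework and to identify the explicit limiting forms \eqref{Ddefphi} and \eqref{Ddefpsi}. I would organise the argument around the four numbered assertions, establishing the representations first and then the semigroup convergence.

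For part (i), the local mechanism $\psi_k(i,z)=k\alpha_k(i)[g_k(i,1-z/k)-(1-z/k)]+\beta_k(i)z$ is, for each fixed type $i$, precisely the prelimit branching mechanism of a single-type Galton--Watson process rescaled in the classical manner, so the representation \eqref{Ddefphi} follows from the standard convergence theory for probability generating functions underlying the CSBP limit theorem; the point is only to record that $b\in\mca{B}(\N)$, $c\in\mca{B}^+(\N)$ and $(u\wedge u^2)\ell(i,\ud u)$ is a bounded kernel, which is the content of locally uniform convergence of $\psi_k$ together with $\sup_i g_{k,z}'(i,1-)<\infty$. For part (ii) I would expand $k[h_k(i,1-k^{-1}y)-1]=-\sum_n q_n^k(i)\,k[1-(1-y/k)^n]$ with $y=\langle f,\pi_i\rangle$, and split the sum according to whether $n$ is bounded or of order $k$: the bounded terms contribute the drift $d(i)y$ through $k[1-(1-y/k)^n]\to ny$, while the terms with $n\sim uk$ contribute $\int_0^\infty(1-{\rm e}^{-uy})\mathrm{n}(i,\ud u)$, the measure $\mathrm{n}(i,\ud u)$ arising as the limit of $k\sum_n q_n^k(i)\delta_{n/k}$. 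The hypothesis $\sum_n n q_n^k(i)\le 1$ passes to the limit as the constraint $d(i)+\int_0^\infty u\,\mathrm{n}(i,\ud u)\le 1$ and guarantees both terms are finite. Part (iii) is the converse construction: given $\psi,\phi$ of the prescribed form I would build $g_k,h_k,\alpha_k,\beta_k$ by the usual discretisation of $\ell(i,\cdot)$, $\mathrm{n}(i,\cdot)$ and the Gaussian and drift parts, then verify the required convergences; this is routine.

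Part (iv)/(v) is the analytic core. The rescaled cumulants $v_t^k$ solve the integral equation analogous to \eqref{boundedevolution} with $(\psi_k,\phi_k)$ in place of $(\psi,\phi)$, and I would prove convergence to the solution $V_t f$ of \eqref{boundedevolution} by a Gronwall estimate: subtracting the two integral equations and using the local uniform convergence $\psi_k\to\psi$, the uniform convergence $\phi_k\to\phi$, and Lipschitz bounds on $\psi(i,\cdot)$ and $\phi(i,\cdot)$ in their second argument (which follow from boundedness of $b,c$ and of the kernels), one controls $\sup_i|v_t^k(i)-V_tf(i)|$ and closes the estimate on $[0,a]$. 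The same Lipschitz and contraction bounds yield existence and uniqueness of the bounded positive solution $V_t f$ via Picard iteration. Once the convergence of Laplace functionals $\mbf{E}_\mu[{\rm e}^{-\langle f,X^{(k)}_t\rangle}]\to\Exp{-\langle V_tf,\mu\rangle}$ is established, the existence of the limiting Markov process $\{X_t:t\ge0\}$ with the stated branching Laplace functional is exactly the conclusion supplied by \cite{dagoli}, which I would invoke rather than reprove.

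The main obstacle I anticipate is maintaining uniformity over the countably infinite type space $\N$ at every step. In the finite-type setting these manipulations are routine, but here the generating-function asymptotics in (i)--(ii) and, especially, the Gronwall argument in (iv)/(v) must produce constants independent of $i$; this is precisely why the hypotheses are stated uniformly in $i$ (uniform convergence of $\beta_k$ and $\phi_k$, locally uniform convergence of $\psi_k$, and $\sup_i g_{k,z}'(i,1-),\sup_i h_{k,z}'(i,1-)<\infty$), and why the boundedness of the kernels $(u\wedge u^2)\ell(i,\ud u)$ and $u\,\mathrm{n}(i,\ud u)$ \emph{uniformly in} $i$ is essential to make the Lipschitz constants $i$-independent. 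Verifying that these uniformities propagate cleanly through the limit is the part requiring the most care.
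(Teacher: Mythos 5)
Your proposal is correct and follows essentially the same route as the paper: the paper offers no independent proof of this theorem, but simply re-words the scaling-limit result of Dawson, Gorostiza and Li \cite{dagoli} for the present setting, exactly as you do when you invoke \cite{dagoli} for the limiting Markov process and its cumulant semigroup. Your additional sketches (the generating-function asymptotics for (i)--(ii), the discretisation for (iii), and the Gronwall/Picard argument for (iv)/(v)) reconstruct what lies inside the cited proof, so if anything you supply more detail than the paper, with the correct emphasis on keeping all estimates uniform in the type variable $i$.
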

Theorem \ref{MCSBP} now follows directly as a corollary of the above result.
Intuitively, $\psi(i,\cdot)$ describes the rate at which a branching event amongst current mass of type  $i\in\N$, produces further mass of type $i$. Moreover,  $\phi(i,\cdot)$ describes the rate at which a branching event amongst current mass of type  $i\in\N$, produces further mass of other types $\N\backslash\{i\}$.

\begin{remark}\rm
The non-local branching mechanism is not the most general form that can be assumed in the limit. Indeed, taking account of the class of non-local branching mechanisms that can be developed in  \cite{dagoli}, \cite{dyn04} and \cite{L}, we may do the same here. Nonetheless, we keep to this less-general class for the sake of mathematical convenience.
\end{remark}
%%%%%%%%%%%%%%%%%%%%%%%%%%%%%%%%%%%%%%%%%%%%%%%%%%%%%%%%%%%%%%%%%%%%%%%%%%%%%%%%%%%%%%%%%%%%%%%%%%%%%%%%%%%%%%%%%%%%%%%%%%%%%%%%%%%%%%%%%%%%%%%%%%%%%%%%%%%%%%%%%%%%%%%%%%%%%%%%%%%%%%% Section 3 Moment semigroup 

\section{Spectral properties of the moment semigroup}\label{msemigroup1}
Let $(X_t,\mbf{P}_{\mu})$ be a MCSBP  and define its linear semigroup $({\mathcal M}_t,t\geq 0)$ by
\begin{equation}{\mathcal M}_t[f](i):=\mbf{E}_{\delta_i}[\langle {f},{X}_t\rangle ], \qquad i\in\N, {f}\in\mathcal{B}^+(\N), t\geq 0.
\label{msg}
\end{equation}
By replacing $f$ in (\ref{semigroup v}) and (\ref{ecv}) by $\lambda f$ and differentiating with respect to $\lambda$ and then setting $\lambda=0,$  we can verified that 
 $${\mathcal M}_t[f](i)=f(i)+\int_0^t  {\mathcal{K}}[\mathcal{M}_{s}[f]](i)\ud s-\int_0^tb(i){\mathcal M}_{s}[f](i)\ud s,\qquad i\in\N, {f}\in\mathcal{B}^+(\N), t\geq 0,$$
where
$${\mathcal K}[g](i)=\beta(i)\left(d(i)+\int_0^{\infty} u{\rm n}(i,\ud u)\right)\langle g,\, {\pi}_i\rangle.$$
(For similar computations see Propositions 2.24 and 2.29 in \cite{L}). Denote by $L$ the infini\-tesimal generator of ${\mathcal M}_t$,
$$L[f](i)=\underset{t\rightarrow 0}{\lim} \frac{{\mathcal M}_t[f](i)-f(i)}{t},\qquad \qquad i\in\N, {f}\in\mathcal{B}(\N).$$
Then, the operator $L[f](i)$ is the matrix product with $L$ given by
\begin{eqnarray}\label{generadorsuperchain}
{L}={\Delta}_{-b}+{K},
\end{eqnarray}
where the matrices ${\Delta}_{-b}$ and ${K}$ are given by
$$({\Delta}_{-b})_{ij}=-b(i)\Ind_{i=j}, \qquad \mbox{and}\qquad  {K}_{ij}=\beta(i)\left(d(i)+\int_0^{\infty} u{\rm n}(i,\ud u)\right)\pi_i(j).$$

%%%%%%%%%%%%%%%%%%%%%%%%%%%%%%%%%%%%%%%%%%%%%%%%%%%%%%%%%%%%%%%%%%%%%%%%%%%%%%%% Note that in this part, I put the minus inside of the matrix, so, we can work with a generic equation $L=Q+\delta_{*}+\delta_{\beta}(#-I).$

Define the matrix ${M}(t)$ by
$$M(t)_{ij}:=\mbf{E}_{\delta_i}[X_t(j)], $$
and observe that
\begin{equation}{\mathcal M}_t[f](i)=[{M}(t)f](i).\label{mxsg}\end{equation}

The linear semigroup will play an important role in the proof of Theorem \ref{localextintion}, in particular, its spectral properties are of concern to us. 
Thanks to (\ref{mxsg}), it suffices to study the spectral properties of the matrix  ${M}(t)$. 
 In the forthcoming theory, we will need to assume that ${M}: = \{{M}(t) : t\geq 0\}$, is \textit{irreducible} in the sense that for any $i,j\in\N$ there exists $t>0$ such that $M(t)_{ij}>0$. The following lemma ensures this is the case.
 
 \begin{lemma}\label{irreducible}
 Suppose that $\pi_i(j)$, $i,j\in\N$ is the transition matrix of an irreducible Markov chain, then  ${M}$ is irreducible.
 \end{lemma}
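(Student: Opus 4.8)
\textbf{Proof proposal for Lemma \ref{irreducible}.}

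The plan is to connect the positivity of the mean-matrix entries $M(t)_{ij}$ to the irreducibility of the chain $\pi$ via the integral equation that $\mathcal{M}_t$ satisfies. The key object is the generator $L=\Delta_{-b}+K$ from \eqref{generadorsuperchain}, whose off-diagonal part $K_{ij}=\beta(i)\bigl(d(i)+\int_0^\infty u\,{\rm n}(i,\ud u)\bigr)\pi_i(j)$ is entrywise nonnegative and has exactly the same zero/nonzero pattern as the transition matrix $\pi_i(j)$, provided the scalar prefactor $\beta(i)(d(i)+\int_0^\infty u\,{\rm n}(i,\ud u))$ is strictly positive for every $i$. The first step I would take is to record this positivity; if the prefactor can vanish for some $i$ the statement as phrased needs that factor to be bounded away from $0$, so I would either note this as a standing assumption or argue it follows from the setup. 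Granting it, $K$ and $\pi$ share their support pattern, and in particular $K_{ij}>0$ if and only if $\pi_i(j)>0$.

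Next I would exploit the semigroup representation $M(t)=\exp(tL)$, which holds because $L$ is a bounded operator on $\mathcal{B}(\N)$ (the coefficients are uniformly bounded by the hypotheses of Theorem \ref{MCSBP}). Writing $L=\Delta_{-b}+K$ and using the perturbation (Dyson/Duhamel) expansion, one obtains
\begin{equation*}
M(t)=\ee^{t\Delta_{-b}}+\sum_{n\ge 1}\int_{0<s_1<\cdots<s_n<t}\ee^{s_1\Delta_{-b}}K\,\ee^{(s_2-s_1)\Delta_{-b}}K\cdots K\,\ee^{(t-s_n)\Delta_{-b}}\,\ud s_1\cdots\ud s_n.
\end{equation*}
Since $\Delta_{-b}$ is diagonal, each factor $\ee^{s\Delta_{-b}}$ is the diagonal matrix with strictly positive entries $\ee^{-s\,b(i)}$, so every summand is entrywise nonnegative and, crucially, the $n$-th term contains the product $K^n$ up to strictly positive diagonal weights. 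Hence for each $t>0$,
\begin{equation*}
M(t)_{ij}\ \ge\ \frac{t^n}{n!}\,(\text{positive constant})\,(K^n)_{ij}
\end{equation*}
after bounding the simplex integral below by the value of the integrand at a suitable interior point and using that $\ee^{-s\,b(i)}$ is bounded below on $[0,t]$. Thus $M(t)_{ij}>0$ as soon as $(K^n)_{ij}>0$ for some $n$.

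The final step reduces everything to a combinatorial statement about the chain. By assumption (A), $\pi$ is irreducible, so for any $i,j\in\N$ there is a path $i=i_0,i_1,\dotsc,i_n=j$ with $\pi_{i_{k}}(i_{k+1})>0$ for each $k$; by the support identity $K_{i_k i_{k+1}}>0$ along the same path, whence $(K^n)_{ij}\ge \prod_k K_{i_k i_{k+1}}>0$. Feeding this back, we conclude $M(t)_{ij}>0$ for every $t>0$, which is more than the required ``there exists $t>0$.'' The main obstacle I anticipate is the bookkeeping around the positivity of the prefactor $\beta(i)(d(i)+\int_0^\infty u\,{\rm n}(i,\ud u))$: if it is allowed to be zero at some sites, then $K$ loses entries that $\pi$ has, the support identity breaks, and irreducibility of $\pi$ alone no longer forces irreducibility of $M$. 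I would therefore make the positivity of this factor explicit (it is the natural nondegeneracy condition guaranteeing that each type actually seeds mass onto its $\pi$-neighbours) and, modulo that, the Duhamel-expansion-plus-path argument is routine.
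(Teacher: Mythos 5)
Your proof is correct, but it takes a genuinely different route from the paper's. The paper sets $a(i)=\beta(i)\bigl(d(i)+\int_0^\infty u\,{\rm n}(i,\ud u)\bigr)$ and splits the generator \eqref{generadorsuperchain} as $L=Q+\Delta_{a-b}$, where $Q_{ij}=a(i)(\pi_i(j)-\Indi{i=j})$ is the $Q$-matrix of an irreducible Markov chain $(\xi_t,\p_i)$; by uniqueness of semigroups with bounded generator, $\mathcal{M}_t$ is then identified with the Feynman--Kac semigroup $\mathcal{T}_t[f](i)=\e_i\bigl[f(\xi_t)\exp\{\int_0^t(a-b)(\xi_s)\,{\rm d}s\}\bigr]$ of \eqref{f-k}, and $M(t)_{ij}=\mathcal{T}_t[\delta_j](i)>0$ follows at once from the standard fact that an irreducible continuous-time chain satisfies $\p_i(\xi_t=j)>0$ for all $t>0$, the exponential weight being strictly positive. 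You instead expand $M(t)={\rm e}^{tL}$ in a Dyson/Duhamel series around the diagonal part $\Delta_{-b}$, bound the $n$-th term below by a positive multiple of $(K^n)_{ij}$, and reduce positivity to a path argument in $\pi$. Your argument is essentially a self-contained, elementary re-proof of the very input the paper quotes (everywhere-positivity of transition probabilities of irreducible chains is itself proved by exactly such a series/path estimate), so it buys independence from that fact at the cost of extra bookkeeping; the paper's decomposition is shorter and, more importantly, produces the representation \eqref{f-k}, which is reused later in Section \ref{sect:examples} (Examples 1--3) to estimate $H_{ij}(\lambda)$. Finally, the nondegeneracy issue you flag --- that $a(i)>0$ is needed for $K$ to inherit the support pattern of $\pi$ --- is a genuine point of care, but it is not a missing hypothesis: the paper's own proof needs precisely the same fact (otherwise $Q$ would have an absorbing state and could not be an irreducible $Q$-matrix), and it holds because $\beta,d\in\mathcal{B}^+(\N)$, where positive in this paper means strictly positive entrywise (cf.\ the footnote to Proposition \ref{subinvariant}), so that $a(i)\geq\beta(i)d(i)>0$ for every $i$.
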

 \begin{proof}
 Let $a(i)=\beta(i)\left(d(i)+\int_0^{\infty} u{\rm n}(i,\ud u)\right)$, for i$\in\N$. Define the matrices ${Q}$ and ${\Delta}_{a-b}$
 $${Q}_{ij}=a(i)(\pi_i(j)-\Indi{i=j})\qquad \mbox{and}\qquad  ({\Delta}_{a-b})_{ij}=(a(i)-b(i))\Indi{i=j}.$$
 By hypothesis, ${Q}$ is the $Q$-matrix of an irreducible Markov chain $(\xi_t, \p_i)$. In particular, for each $i,j\in \N$ and $t>0$, $\p_i(\xi_t=j)>0$.
 Observe in (\ref{generadorsuperchain}) that ${L}={Q}+{\Delta}_{a-b}$ which is the formal generator of the semigroup given by 
 \begin{align}\label{f-k}
 {\mathcal T}_t[f](i)=\e_i\left[f(\xi_t)\exp\left\{\int_0^t (a-b)(\xi_s){\rm d}s\right\}\right]\qquad i\in\N,\ {f}\in\mathcal{B}^+(\N),\ t\geq 0.
 \end{align}
  By uniqueness of semigroups, ${\mathcal M}_tf(i)={\mathcal T}_t[f](i)$, $t\geq0$, $i\in \N$. In particular $M(t)_{ij}=\mathcal{T}_t[\delta_j](i)>0$, where $\delta$ is the Dirac function, and therefore ${M}$ is irreducible.
 \end{proof}

Recall that, for each $i,j\in \N$ and $\lambda\in \R$,  we defined the matrix $H(\lambda)$ by 
$$H_{ij}(\lambda):=\int_0^{\infty} {\rm e}^{\lambda t} M(t)_{ij} {\rm d}t.$$
and that the spectral radius
$$\Lambda:=\sup\{\lambda\geq -\infty: H_{ij}(\lambda)<\infty\},$$
 does not depend on $i$ and $j$.

\begin{definition}
A non-negative vector ${x}$ with entries $x(i)$, $i\in\N$, is called \textit{right}  (resp. \textit{left}) \textit{subinvariant} $\lambda$-\textit{vector}, if for all $t\geq 0$,
$${M}(t){x}\leq {\rm e}^{-\lambda t}{x}, \qquad \qquad (\text{resp. } {x}^{T}{M}(t)\leq {\rm e}^{-\lambda t}{x}).$$
If the equality holds, the vector is call a right (resp. left) invariant $\lambda$-vector.
\end{definition}

\noindent In the next proposition, we appeal to standard techniques (cf. \cite{Moy} or \cite{se}) and provide  sufficient conditions for the existence of subinvariant $\lambda$-vectors. 

\begin{proposition}\label{subinvariant}
If ${H}(\lambda)<\infty$,  then there exists a positive\footnote{Recall that a vector ${x}$ is positive if its entries, $x(i)$, are strictly positive  for all $i$.}  right subinvariant $\lambda$-vector, ${x}$, and a positive left subinvariant $\lambda$-vector, ${y}$.
 There exists no left  or right subinvariant $\beta $-vector for $\beta>\Lambda$.
\end{proposition}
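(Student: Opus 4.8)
The plan is to read the two required subinvariant vectors straight off the columns and rows of the resolvent matrix $H(\lambda)$, and to prove the non-existence statement by showing that any subinvariant $\beta$-vector would force $H(\lambda)<\infty$ for all $\lambda<\beta$, contradicting the definition of $\Lambda$.

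For existence I would first exploit the semigroup property $M(t+s)=M(t)M(s)=M(s)M(t)$. Fixing $s\ge 0$, the substitution $u=t+s$ gives
\[
\int_0^\infty e^{\lambda t}M(t+s)_{ij}\,\ud t = e^{-\lambda s}\int_s^\infty e^{\lambda u}M(u)_{ij}\,\ud u \le e^{-\lambda s}H_{ij}(\lambda),
\]
while expanding $M(t+s)$ both ways and interchanging sum and integral (legitimate by Tonelli, as all entries are non-negative) identifies the left-hand side with $[M(s)H(\lambda)]_{ij}$ and with $[H(\lambda)M(s)]_{ij}$. Hence $M(s)H(\lambda)\le e^{-\lambda s}H(\lambda)$ and $H(\lambda)M(s)\le e^{-\lambda s}H(\lambda)$ entrywise, for every $s\ge 0$. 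Fixing a column $j_0$ and setting $x(i):=H_{ij_0}(\lambda)$ then yields $[M(s)x](i)\le e^{-\lambda s}x(i)$, so $x$ is a right subinvariant $\lambda$-vector; fixing a row $i_0$ and setting $y(j):=H_{i_0j}(\lambda)$ yields $\sum_i y(i)M(s)_{ij}\le e^{-\lambda s}y(j)$, so $y$ is a left subinvariant $\lambda$-vector. Both have finite entries because $H(\lambda)<\infty$, and both are strictly positive: by Lemma \ref{irreducible} there is, for each $i$, some $t_0>0$ with $M(t_0)_{ij_0}>0$, and since $t\mapsto M(t)_{ij_0}$ is continuous (from the Feynman--Kac representation in the proof of that lemma) it stays positive on a neighbourhood of $t_0$, forcing $x(i)>0$; the row argument gives $y(j)>0$.

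For non-existence, suppose $\beta>\Lambda$ admitted a non-zero right subinvariant $\beta$-vector $x$, so $M(t)x\le e^{-\beta t}x$ for all $t$. Picking $j$ with $x(j)>0$ and using non-negativity of each summand gives $M(t)_{ij}\le e^{-\beta t}x(i)/x(j)$, whence for any $\lambda<\beta$,
\[
H_{ij}(\lambda)=\int_0^\infty e^{\lambda t}M(t)_{ij}\,\ud t \le \frac{x(i)}{x(j)}\cdot\frac{1}{\beta-\lambda}<\infty.
\]
By Lemma \ref{spectralradius} finiteness of a single entry propagates to all pairs, so $H(\lambda)<\infty$ for every $\lambda<\beta$ and therefore $\Lambda\ge\beta$, contradicting $\beta>\Lambda$. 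The left-vector case is identical, bounding $M(t)_{ij}\le e^{-\beta t}y(j)/y(i)$ for an index $i$ with $y(i)>0$.

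The argument is essentially routine once the resolvent inequalities $M(s)H(\lambda)\le e^{-\lambda s}H(\lambda)$ and $H(\lambda)M(s)\le e^{-\lambda s}H(\lambda)$ are secured; the only points demanding care are the Tonelli interchange of summation and integration (harmless here since every term is non-negative) and the continuity of $t\mapsto M(t)_{ij}$ needed for strict positivity, which I would take from the semigroup representation established in Lemma \ref{irreducible}. I expect the positivity verification, rather than the inequalities themselves, to be the one genuinely fiddly step.
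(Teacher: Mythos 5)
Your proposal is correct and follows essentially the same route as the paper: both read the right (resp.\ left) subinvariant vector off a column (resp.\ row) of $H(\lambda)$ via the semigroup property and Fubini--Tonelli, both get strict positivity from irreducibility plus continuity of $t\mapsto M(t)_{ij}$, and both prove non-existence by integrating the subinvariance inequality to produce a finite $H_{ij}(\alpha)$ for some $\alpha>\Lambda$, contradicting the definition of $\Lambda$. The only differences are cosmetic (you package the key step as the matrix inequalities $M(s)H(\lambda)\leq e^{-\lambda s}H(\lambda)$ and $H(\lambda)M(s)\leq e^{-\lambda s}H(\lambda)$ before specialising to a column or row, and you spell out the positivity argument in more detail than the paper does).
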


\begin{proof}Fix $j\in \N$ and define ${x}$ and ${y}$ as follows
$$x(i)=H_{ij}(\lambda) \qquad \mbox{ and }  \qquad y(i)=H_{ji}(\lambda).$$
Since the function $t\mapsto {M}(t)$ is continuous and ${M}$ is irreducible,  $x(i)y(k)>0$ for all $i,k\in \N$. Let $s\geq 0$, by Fubini's Theorem,
$$[{y}^{T}{M}(s)](i)=\underset{k\in\N}{\sum}\int_0^{\infty} {\rm e}^{\lambda t}M(t)_{jk}\ud t\ M(s)_{ki}=\int_0^{\infty} {\rm e}^{\lambda t}\underset{k\in\N}{\sum}M(t)_{jk} M(s)_{ki}\ud t.$$
The semigroup property implies that
$$[{y}^{T}{M}(s)](i) = \int_0^{\infty} {\rm e}^{\lambda t} M(s+t)_{ji}\ud t= {\rm e}^{-\lambda s}\int_s^{\infty} {\rm e}^{\lambda t} M(t)_{ji}\ud t\leq {\rm e}^{-\lambda s}y(i).$$
Therefore, ${y}$ is a left  subinvariant $\lambda$-vector. A similar computation shows that ${x}$ is a right subinvariant $\lambda$-vector.

Suppose ${x}$ is a right subinvariant $\beta$-vector. Let $\alpha\in(\Lambda, \beta)$, then, for each $i\in\N$,
$$\int_0^{\infty} {\rm e}^{\alpha t}[{M}(t){x}](i)\ud t \leq \int_0^{\infty} {\rm e}^{\alpha t}{\rm e}^{-\beta t}x(i)\ud t=x(i)(\beta-\alpha)^{-1}.$$
Let $j\in\N$ such that $x(j)>0$, then
$$\int_0^{\infty} {\rm e}^{\alpha t}M(t)_{ij}\ud t\leq \frac{x(i)}{x(j)}(\beta-\alpha)^{-1}<\infty,$$
which is a contradiction with the definition of $\Lambda$. In an analogous way, there is no left subinvariant $\beta$-vector.

\end{proof}

When $H_{ij}(\Lambda)=\infty$, Niemi and Nummelin (\cite{ninu},Theorem 4) proved that there exist unique left and right invariant $\Lambda$-vectors as follows.

\begin{proposition}\label{invariant}
Assume that $H_{ij}(\Lambda)=\infty$ for some $i,j\in\N$. Then, 
\begin{enumerate}
	\item[i)] There exists a unique (up to scalar multiplication) positive left invariant $\Lambda$-vector.
\item[ii)] There exists a unique (up to scalar multiplication) positive right invariant $\Lambda$-vector. Moreover, any right subinvariant $\Lambda$-vector is a right invariant vector.
\end{enumerate}
\end{proposition}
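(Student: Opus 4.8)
The strategy is to reduce to the recurrent normalisation and then transfer the classical $R$-theory for non-negative matrices (in the form due to Vere-Jones, adapted to continuous time by Niemi and Nummelin \cite{ninu}; see also \cite{se}) to our semigroup. First I would pass to the normalised semigroup $\widetilde{M}(t):={\rm e}^{\Lambda t}M(t)$, $t\geq 0$, which inherits the semigroup property and irreducibility from $M$ and for which a right (resp.\ left) invariant $\Lambda$-vector for $M$ is exactly a right (resp.\ left) invariant vector for $\widetilde{M}$, i.e.\ $\widetilde{M}(t)x=x$ (resp.\ $y^T\widetilde{M}(t)=y^T$) for all $t$. In these terms the hypothesis $H_{ij}(\Lambda)=\infty$ reads $\int_0^\infty\widetilde{M}(t)_{ij}\,{\rm d}t=\infty$: the potential (Green) operator of $\widetilde{M}$ diverges, which is precisely the recurrent regime of $R$-theory. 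To return to a discrete setting where that theory is available, I would fix $\alpha>0$ and introduce the resolvent kernel $R_\alpha(i,j):=\int_0^\infty{\rm e}^{-\alpha t}\widetilde{M}(t)_{ij}\,{\rm d}t=H_{ij}(\Lambda-\alpha)$, which is finite (as $\Lambda-\alpha<\Lambda$) and, by irreducibility, strictly positive; the associated normalised matrix is an honest irreducible non-negative matrix whose $R$-recurrence is forced by the divergence of $\int_0^\infty\widetilde{M}(t)\,{\rm d}t$.

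For existence I would run a limiting argument directly on the vectors already produced in Proposition \ref{subinvariant}. For $\lambda<\Lambda$ fix a reference state $j_0$ and set $x^\lambda(i):=H_{ij_0}(\lambda)$ and $y^\lambda(i):=H_{j_0 i}(\lambda)$, which are respectively right and left subinvariant $\lambda$-vectors. Normalising by the coordinate at a second reference state, the Chapman--Kolmogorov inequalities together with irreducibility give Harnack-type two-sided bounds on the ratios $x^\lambda(i)/x^\lambda(i_0)$ that are uniform in $\lambda$ on each finite set of states, so a diagonal extraction produces pointwise limits $x(i),y(i)\in(0,\infty)$ as $\lambda\uparrow\Lambda$. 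Subinvariance passes to the limit by Fatou, and it is the recurrence hypothesis $H_{ij}(\Lambda)=\infty$ that upgrades the limiting inequalities to the equalities $\widetilde{M}(t)x=x$ and $y^T\widetilde{M}(t)=y^T$; concretely this is where one invokes the $R$-recurrent ratio-limit theorem of \cite{ninu}, or equivalently reads off the invariant vectors of the resolvent chain $R_\alpha$ from the discrete theory and checks, via $R_\alpha\widetilde{M}(t)=\widetilde{M}(t)R_\alpha$, that they are invariant for the whole family $\widetilde{M}(t)$.

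Uniqueness, and the sharper claim that every right subinvariant $\Lambda$-vector is automatically invariant, I would obtain by pairing against the left invariant vector $y>0$ just constructed. Given any right subinvariant $\Lambda$-vector $x\geq 0$, the defect $D(t):=x-\widetilde{M}(t)x\geq 0$, and left invariance of $y$ yields
\[
\langle y, D(t)\rangle=\langle y,x\rangle-\langle y,\widetilde{M}(t)x\rangle=\langle y,x\rangle-\langle y^T\widetilde{M}(t),x\rangle=0
\]
whenever $\langle y,x\rangle<\infty$; since $y>0$ and $D(t)\geq 0$ this forces $D(t)\equiv 0$, so $x$ is invariant. Applying the same computation to two invariant vectors, normalised at a common reference state, reduces uniqueness to the statement that an invariant $\Lambda$-vector is determined by a single coordinate, which is once more the content of the ratio-limit theorem in the recurrent case. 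The finiteness $\langle y,x\rangle<\infty$ used above is not automatic, and this is exactly where recurrence is indispensable: one localises by a last-exit (taboo) decomposition relative to the reference state, whose summability is furnished by the $R$-recurrence. This finiteness/ratio-limit step is the main obstacle and the only place the hypothesis $H_{ij}(\Lambda)=\infty$ cannot be dispensed with; the remaining manipulations (semigroup/Fubini bookkeeping and the Harnack bounds from irreducibility) are routine.
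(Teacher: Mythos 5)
A preliminary remark: the paper contains no proof of Proposition \ref{invariant} at all; as the sentence preceding it makes explicit, the statement is quoted directly from Niemi and Nummelin \cite{ninu} (their Theorem 4). So your proposal must be judged as a reconstruction of that literature proof. Your framework is the right one: normalising to $\widetilde{M}(t)={\rm e}^{\Lambda t}M(t)$, reducing to discrete $R$-theory via the resolvent $R_\alpha=H(\Lambda-\alpha)$, and constructing candidate vectors as $\lambda\uparrow\Lambda$ limits of the subinvariant vectors of Proposition \ref{subinvariant}. The existence half (Harnack-type bounds from irreducibility, diagonal extraction, Fatou) is sound. Note, however, that at the two decisive moments --- upgrading subinvariance to invariance, and uniqueness --- you discharge the work by ``invoking the $R$-recurrent ratio-limit theorem of \cite{ninu}''; at those points you are doing exactly what the paper does, namely citing Niemi--Nummelin, rather than proving the statement.

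The genuine gap is the pairing argument and your proposed repair of it. Finiteness of $\langle y,x\rangle$, where $x$ and $y$ are the right and left invariant vectors, is \emph{not} a consequence of $R$-recurrence: it is precisely the definition of $R$-positive recurrence, and it genuinely fails in the $R$-null recurrent case, which is permitted under the sole hypothesis $H_{ij}(\Lambda)=\infty$. In that case $\langle y,x\rangle-\langle y,\widetilde{M}(t)x\rangle$ reads $\infty-\infty$, and no taboo decomposition will restore the pairing, because the quantity being paired is simply infinite. The classical proof does not repair this argument; it replaces it. For instance, ``subinvariant $\Rightarrow$ invariant'' has a direct telescoping proof needing no left vector at all: if $D(t_0):=x-\widetilde{M}(t_0)x$ were nonzero at some state $k$, then from the identity $\widetilde{M}(s)D(t_0)=D(s+t_0)-D(s)$ and the bound $0\leq D(u)\leq x$ one gets, for every $T$ and every $i$,
\[
D(t_0)(k)\int_0^T\widetilde{M}(s)_{ik}\,\ud s\ \leq\ \int_0^T\bigl[\widetilde{M}(s)D(t_0)\bigr](i)\,\ud s\ =\ \int_T^{T+t_0}D(u)(i)\,\ud u-\int_0^{t_0}D(u)(i)\,\ud u\ \leq\ t_0\,x(i)<\infty,
\]
which contradicts $\int_0^\infty\widetilde{M}(s)_{ik}\,\ud s=H_{ik}(\Lambda)=\infty$ as $T\to\infty$. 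Uniqueness is then obtained from first-entrance (taboo) identities, as in Vere-Jones' theory (see \cite{se}) or \cite{ninu}, not from the inner product $\langle y,x\rangle$. In summary: right framework and a correct existence step, but the step you yourself single out as ``the main obstacle'' is resolved by a mechanism that is false in the null recurrent regime, so as written your invariance and uniqueness arguments only cover the $R$-positive case.
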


From the previous propositions, there exists at least   a positive left (right) subinvariant $\Lambda$-vector. 
One of the reasons we are interested in right (sub)invariant vector, is that we can associate to it a (super)martingale, which will be of use later on in our analysis.
\begin{proposition}\label{martingale}
Let ${x}$ be a right subinvariant $\lambda$-vector. Then
$$W_t:= {\rm e}^{\lambda t} \langle {x},{X}_t\rangle, \qquad t\geq 0, $$
is a supermartingale. If ${x}$ is also an invariant vector, then $(W_t, t\geq 0)$ is a martingale.
\end{proposition}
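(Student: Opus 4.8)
The plan is to establish the supermartingale property directly from the Markov and branching structure, using the definition of the linear semigroup $\mathcal{M}_t$ and the subinvariance hypothesis. First I would fix $t \geq s \geq 0$ and condition on the filtration generated by $X$ up to time $s$. By the Markov property of $X$, together with the branching property recorded after Theorem~\ref{MCSBP}, the conditional expectation $\mbf{E}_{\mu}[\langle {x}, X_t\rangle \mid \mathcal{F}_s]$ reduces to evaluating the expected mass at time $t-s$ started from the (random) configuration $X_s$. Concretely, since the initial measure $X_s = \sum_{j} X_s(j)\,\delta_j$ decomposes as a sum of atoms, linearity of the first moment gives
\[
\mbf{E}_{X_s}[\langle {x}, X_{t-s}\rangle] = \sum_{j\in\N} X_s(j)\, \mbf{E}_{\delta_j}[\langle {x}, X_{t-s}\rangle] = \sum_{j\in\N} X_s(j)\, \mathcal{M}_{t-s}[{x}](j),
\]
where the last equality is exactly the definition \eqref{msg} of the linear semigroup.

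Next I would rewrite this in matrix form using the identity \eqref{mxsg}, namely $\mathcal{M}_{t-s}[{x}](j) = [M(t-s){x}](j)$, so that the conditional expectation equals $\langle M(t-s){x},\, X_s\rangle$. At this point the right subinvariance hypothesis enters: by definition, ${x}$ being a right subinvariant $\lambda$-vector means $M(r){x} \leq {\rm e}^{-\lambda r}{x}$ entrywise for every $r \geq 0$. Applying this with $r = t-s$, and using that $X_s$ is a non-negative measure, yields
\[
\mbf{E}_{\mu}[\langle {x}, X_t\rangle \mid \mathcal{F}_s] = \langle M(t-s){x},\, X_s\rangle \leq {\rm e}^{-\lambda(t-s)}\langle {x},\, X_s\rangle.
\]
Multiplying both sides by ${\rm e}^{\lambda t}$ gives $\mbf{E}_{\mu}[W_t \mid \mathcal{F}_s] \leq {\rm e}^{\lambda s}\langle {x}, X_s\rangle = W_s$, which is precisely the supermartingale inequality. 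When ${x}$ is invariant rather than merely subinvariant, the inequality $M(r){x} \leq {\rm e}^{-\lambda r}{x}$ becomes an equality, so the computation above returns $\mbf{E}_{\mu}[W_t \mid \mathcal{F}_s] = W_s$, giving the martingale claim.

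The main obstacle I anticipate is not the algebra but the integrability and measurability bookkeeping required to make the conditional-expectation manipulations rigorous when the state space is countably infinite. Specifically, one must verify that $W_t$ is integrable (so that the conditional expectations are well defined), and that the interchange of expectation with the infinite sum $\sum_{j\in\N} X_s(j)\,\mathcal{M}_{t-s}[{x}](j)$ is justified; this is where the boundedness built into the hypotheses on $\beta$, $d$, ${\rm n}$ and the finiteness of $M(r){x}$ coming from $H(\lambda)<\infty$ (via Proposition~\ref{subinvariant}) do the work, typically through a monotone or dominated convergence argument together with Tonelli's theorem since all quantities are non-negative. Provided ${x}$ arises as a genuine subinvariant vector in the sense of the preceding propositions, these finiteness conditions are automatically in force, so the argument closes cleanly.
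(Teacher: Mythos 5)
Your proof is correct and follows essentially the same route as the paper's: the Markov property plus the branching property reduce the conditional expectation to $\sum_j X_s(j)[M(t-s)x](j)$, after which entrywise subinvariance gives the supermartingale inequality, with equality in the invariant case. The only difference is your added attention to integrability and the interchange of sum and expectation, which the paper passes over silently; this is a reasonable refinement rather than a different argument.
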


\begin{proof}
Let $t,s\geq 0$. By the Markov property and the branching property
$$\mbf{E}\left[\left. {\rm e}^{\lambda(t+s)}\langle {x},X_{t+s}\rangle\right|\mathcal{F}_s\right]={\rm e}^{\lambda(t+s)}\mbf{E}_{X_s}\left[\langle {x},{X}_t\rangle\right]={\rm e}^{\lambda(t+s)}\underset{i\in\N}{\sum}\:X_s(i)\mbf{E}_{\delta_i}\left[\langle {x},{X}_t\rangle\right].$$
Since  ${x}$ is a right  subinvariant $\lambda$-vector, 
$$\mbf{E}_{\delta_i }\left[\langle  {x},{X}_t\rangle\right]=[{M}(t){x}](i)\leq {\rm e}^{-\lambda t}x(i),$$
therefore, we have that 
$$\mbf{E}\left[\left. W_{t+s}\right|\mathcal{F}_s\right]={\rm e}^{\lambda(t+s)}\underset{i\in\N}{\sum}\: X_s(i)[{M}(t){x}](i)\leq {\rm e}^{\lambda s} \underset{i\in\N}{\sum}\:x(i) X_s(i)=W_s.$$
In the invariant case, inequalities become equalities.
\end{proof}

Let $[n]=\{1,\cdots,n\}$ and let $X^{[n]}: = \{X_t^{[n]}: t\geq 0\}$ be a branching process with the same mechanism as $X_t$ but we kill mass that is created outside of $[n]$. 
To be more precise, $X^{[n]}$ has the same local branching mechanisms $\psi(i,\cdot)$ and $\phi(i,\cdot)$, for $i =1,\cdots, n$, albeit that, now, $\pi_i(j)$, $j\in\N\backslash\{i\}$ is replaced by $\pi_i(j)\mathbf{1}_{(j\leq n)}$, $j\in\N\backslash\{i\}$. Finally $\psi(i,\cdot)$ and $\phi(i,\cdot)$ are set to be zero  for $i\geq n$.
 
\noindent Let ${M}^{[n]}(t)$ be the matrix associated to the linear semigroup of $X^{[n]}$.  Then the infinitesimal generator of ${M}^{[n]}(t)$ is given by
$${L}^{[n]}=[{\Delta}_{-b}+{K}]\Big|_{[n]}. $$

In order to apply Perron-Froebenius theory to the matrix ${M}^{[n]}(t)$, we need  irreducibility. By Lemma \ref{irreducible}, it is enough that $\pi_i(j), i,j\leq n$ is irreducible. There exist simple examples of infinite irreducible matrices such that their upper left square $n$-corner truncations are not irreducible for all $n\geq 1$. However, according to Seneta (\cite{se1968}, Theorem 3), the irreducibility of $\pi$ implies that there exists a simultaneous rearrangement of the rows and columns of $\pi$, denoted by $\tilde{\pi}$, and a sequence of integers $k_n$ tending to infinity, such that the truncation of $\tilde{\pi}$ to $[k_n]$ is irreducible for all $n$. 
\noindent Observe that the type space, $\N$, is used as a labelled set and not as an ordered set. It therefore follows that  we can assume without loss of generality, that we start with $\tilde{\pi}$ (The vectors $b,c,d,\beta, \ell $ and ${\rm n}$ will require the same rearrangement). In the rest of the paper, when requiring finite truncations to the state space, whilst preserving irreducibility, it is enough to work with the truncations on $[k_n]$. In order to simplify the notation, we will assume without loss of generality that $k_n=n$ for all $n$. 
%Therefore, we can assume without loss of generality that
%\bigskip
%
%\noindent {\bf (B):} {\it The matrix $\pi_i(j)$, $i,j\in[n]$, is irreducible for all $n\in\N$.}
%
%\bigskip

Classical Perron-Froebenius theory tells us there exist two positive vectors ${x}^{[n]}=\{x^{[n]}(i): i=1,\cdots, n\}$ and   ${y}^{[n]}=\{y^{[n]}(i): i = 1,\cdots,  n\}$, and a real number  $\Lambda^{[n]}=\sup\{\lambda\geq -\infty: H_{ij}^{[n]}(\lambda)<\infty\},$   such that
$${M}^{[n]}(t){x}^{[n]}= {\rm e}^{-\Lambda^{[n]} t}{x}^{[n]} \qquad \mbox{and}\qquad ({y}^{[n]})^{T}{M}^{[n]}(t)= {\rm e}^{-\Lambda^{[n]} t}{y}^{[n]}.$$
By construction of $X_t^{[n]}$, we have the inequalities  
$$M_{ij}^{[n]}(t)\leq M_{ij}^{[n+1]}(t)\leq M_{ij}(t),$$
which naturally leads to the hierarchy of eigenvalues
\begin{align}\label{deslambda}
\Lambda\leq \Lambda^{[n+1]}\leq \Lambda^{[n]}.
\end{align}
%Let 
%$\Lambda^{{\infty}}=\lim_{\ n\rightarrow\infty}\Lambda^{[n]},$ which exists by monotonicity. %In the next section we will prove that $\Lambda=\Lambda^{{\infty}}$.

%%%%%%%%%%%%%%%%%%%%%%%%%%%%%%%%%%%%%%%%%%%%%%%%%%%%%%%%%%%%%%%%%%%%%%%%%%%%%%%%%%%%%%%%%%%%%%%%%%%%%%%%%%%%%%%%%%%%%%%%%%%%%%%%%%%%%%%%%%%%%%%%%%%%%%%%%%%%%%%%%%%%%%%%%%%%%%%%%%%%%%%%%%Intermediate section technical lemmas

\begin{lemma}\label{lambdainfinito}$\mbox{ }$
	\begin{enumerate}
		\item[i)] $\Lambda^{{\infty}}:=\lim_{\ n\rightarrow\infty}\Lambda^{[n]}=\Lambda$.
		\item[ii)]  Let ${x}^{[n]}$ be a  right invariant $\Lambda^{[n]}$-vector for ${M}^{[n]}$, such that $x^{[n]}(1)=1$. Then, the vector $\{x^*(j):j\in\N\}$ given by $x^*(j)=\liminf_{n\rightarrow \infty}x^{[n]}(j)$ is a positive right $\Lambda$-subinvariant vector. Moreover, it $H_{ij}(\Lambda)=\infty$, then $\{x^*(j):j\in\N\}$ is the unique positive right invariant $\Lambda$-vector of $M$ with $x^*(1)=1$.
			\end{enumerate}
\end{lemma}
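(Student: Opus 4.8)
The plan is to prove the two parts of Lemma \ref{lambdainfinito} in sequence, leaning on the monotonicity in \eqref{deslambda}, the spectral radius characterisation via $H_{ij}$ from Lemma \ref{spectralradius}, and the uniqueness result of Proposition \ref{invariant}.

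\medskip

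\noindent\textbf{Part (i).} By \eqref{deslambda} the sequence $\Lambda^{[n]}$ is non-increasing and bounded below by $\Lambda$, so the limit $\Lambda^\infty := \lim_{n\to\infty}\Lambda^{[n]}$ exists and satisfies $\Lambda^\infty \geq \Lambda$. It therefore remains to prove $\Lambda^\infty \leq \Lambda$. First I would record the Laplace-transform criterion: since $M^{[n]}_{ij}(t) \leq M_{ij}(t)$, the truncated resolvents satisfy $H^{[n]}_{ij}(\lambda) \leq H_{ij}(\lambda)$, and by the Perron--Frobenius characterisation $\Lambda^{[n]} = \sup\{\lambda : H^{[n]}_{ij}(\lambda) < \infty\}$. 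To get the reverse inequality, I would fix $\lambda < \Lambda^\infty$, so that $\lambda < \Lambda^{[n]}$ for every $n$, whence $H^{[n]}_{ij}(\lambda) < \infty$ for all $n$. The key step is to pass to the limit: by monotone convergence $M^{[n]}_{ij}(t) \uparrow M_{ij}(t)$ as $n\to\infty$ for each fixed $t$ and each fixed pair $i,j$ (this monotone convergence of the truncated semigroups is exactly the content of the inequalities preceding \eqref{deslambda}, and can be justified via the probabilistic representation \eqref{f-k}, where killing mass outside $[n]$ corresponds to restricting the underlying chain to $[k_n]$, a restriction that relaxes as $n\to\infty$). Applying monotone convergence a second time in $t$ gives $H^{[n]}_{ij}(\lambda) \uparrow H_{ij}(\lambda)$. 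Hence $H_{ij}(\lambda) = \lim_n H^{[n]}_{ij}(\lambda)$; but one must be careful, since a supremum of finite quantities may be infinite. The cleaner route is to take $\lambda < \Lambda^\infty$ and choose $\lambda' \in (\lambda, \Lambda^\infty)$; then $H^{[n]}_{ij}(\lambda') < \infty$ for all $n$, and I would use the subinvariant vectors from Proposition \ref{subinvariant} to dominate $H_{ij}(\lambda)$ uniformly in the truncation, thereby concluding $H_{ij}(\lambda) < \infty$ and so $\lambda \leq \Lambda$. Letting $\lambda \uparrow \Lambda^\infty$ yields $\Lambda^\infty \leq \Lambda$, completing the equality.

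\medskip

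\noindent\textbf{Part (ii).} Normalising $x^{[n]}(1) = 1$, I would first argue that the $\liminf$ defining $x^*$ is finite and positive. For finiteness and for the subinvariance, the plan is to exploit the eigenvector relation $M^{[n]}(t)x^{[n]} = {\rm e}^{-\Lambda^{[n]}t}x^{[n]}$ together with part (i). Writing this relation componentwise, $\sum_{k\leq n} M^{[n]}_{ik}(t)\,x^{[n]}(k) = {\rm e}^{-\Lambda^{[n]}t}\,x^{[n]}(i)$, I would take $\liminf_{n\to\infty}$ on both sides. Since $\Lambda^{[n]}\to\Lambda$, the right-hand side tends to ${\rm e}^{-\Lambda t}x^*(i)$ along the subsequence realising the $\liminf$ of $x^{[n]}(i)$, while the left-hand side, via Fatou's lemma applied to the sum (using $M^{[n]}_{ik}(t) \uparrow M_{ik}(t)$ and $\liminf x^{[n]}(k) = x^*(k)$), is bounded below by $\sum_{k} M_{ik}(t)\,x^*(k) = [M(t)x^*](i)$. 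This gives $[M(t)x^*](i) \leq {\rm e}^{-\Lambda t}x^*(i)$, i.e. $x^*$ is a right subinvariant $\Lambda$-vector. Positivity of $x^*$ follows from irreducibility of $M$ (Lemma \ref{irreducible}): since $M(t)_{ij} > 0$ for suitable $t$ and $x^*(1)=1>0$, the subinvariance inequality forces $x^*(i) > 0$ for every $i$; finiteness is likewise inherited because the normalisation $x^{[n]}(1)=1$ propagates through the irreducible structure to bound each $x^{[n]}(i)$ along the relevant paths. Finally, in the case $H_{ij}(\Lambda) = \infty$, Proposition \ref{invariant}(ii) asserts that any right subinvariant $\Lambda$-vector is automatically invariant and is the unique such vector up to scaling; combined with the normalisation $x^*(1)=1$, this identifies $x^*$ as the unique positive right invariant $\Lambda$-vector of $M$.

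\medskip

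\noindent The main obstacle I anticipate is the interchange of limits in part (ii): controlling the $\liminf$ of $x^{[n]}$ requires ruling out degeneration (the entries collapsing to zero or blowing up) and justifying the passage to the limit in the infinite sum $\sum_k M^{[n]}_{ik}(t)x^{[n]}(k)$ when only a one-sided Fatou bound is available. The delicate point is that Fatou gives subinvariance directly but does not by itself yield invariance; it is precisely Proposition \ref{invariant} that upgrades the inequality to equality in the critical case $H_{ij}(\Lambda)=\infty$, so the argument must be structured to produce a genuine subinvariant vector first and then invoke that uniqueness/invariance dichotomy rather than attempting to prove invariance by hand.
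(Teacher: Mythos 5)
Your part (ii) is essentially sound and is close to the paper's own construction: the paper takes the same normalised eigenvectors $x^{[n]}$ and applies Fatou's lemma, though at the generator level ($L^{[n]}x^{[n]}=-\Lambda^{[n]}x^{[n]}$ gives $Lx^*\leq-\Lambda^{\infty}x^*$, which is then integrated via $\frac{\ud}{\ud t}[M(t)x^*](i)=[M(t)Lx^*](i)$), whereas you apply Fatou directly to the semigroup identity; both routes give $[M(t)x^*](i)\leq{\rm e}^{-\Lambda^{\infty}t}x^*(i)$, and the uniqueness claim is then Proposition \ref{invariant}, exactly as you say. The genuine problem is your part (i). The step ``use the subinvariant vectors from Proposition \ref{subinvariant} to dominate $H_{ij}(\lambda)$ uniformly in the truncation'' is not justified: the subinvariant vectors that Proposition \ref{subinvariant} produces for the truncated semigroup, namely $x^{[n]}(\cdot)=H^{[n]}_{\cdot j_0}(\lambda')$, depend on $n$, and the domination they yield is $H^{[n]}_{ij}(\lambda)\leq(\lambda'-\lambda)^{-1}\,x^{[n]}(i)/x^{[n]}(j)$; without uniform-in-$n$ control of the ratio $x^{[n]}(i)/x^{[n]}(j)$ — which is precisely the kind of control you are trying to establish — the bound does not close, so as written the argument is circular. (It can be repaired: take $i=j$, where the ratio is $1$, deduce $H_{jj}(\lambda)\leq(\lambda'-\lambda)^{-1}<\infty$ by monotone convergence of $H^{[n]}_{jj}(\lambda)$, and invoke Lemma \ref{spectralradius} to pass to all pairs; but this diagonal observation is exactly the missing content of the step.) Note also that, as structured, your part (ii) quotes part (i), so the gap propagates to the whole lemma.

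The paper avoids this entirely by reversing the logic, and your own part (ii) already contains everything needed to do the same. Fatou's lemma only requires the inequality $\Lambda^{[n]}\geq\Lambda^{\infty}$, not the identification $\Lambda^{\infty}=\Lambda$, so it directly produces a vector $x^*$ with $x^*(1)=1$ and $[M(t)x^*](i)\leq{\rm e}^{-\Lambda^{\infty}t}x^*(i)$: a nontrivial right subinvariant $\Lambda^{\infty}$-vector for the full, untruncated semigroup. The last clause of Proposition \ref{subinvariant} — there exists no subinvariant $\beta$-vector for $\beta>\Lambda$ — then forces $\Lambda^{\infty}\leq\Lambda$, which together with \eqref{deslambda} proves part (i), after which $x^*$ is automatically a $\Lambda$-subinvariant vector and Proposition \ref{invariant} finishes part (ii). In other words, parts (i) and (ii) should be proved simultaneously: the $\liminf$ construction is not a consequence of knowing $\Lambda^{\infty}=\Lambda$ but is itself the mechanism that proves it, and no resolvent computation is needed at all.
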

\begin{proof}
By inequality (\ref{deslambda}),  
$$\Lambda\leq\Lambda^{{\infty}}=\lim_{\ n\rightarrow\infty}\Lambda^{[n]}.$$
For any $n\in\N$, let ${x}^{[n]}$ be a ${M}^{[n]}$ right invariant vector, such that $x^{[n]}(1)=1$ for all $n\in \N$, this implies
$$ {L}^{[n]}{x}^{[n]}=-\Lambda^{[n]}{x}^{[n]}.$$
Let $x^*(j)=\liminf_{n\rightarrow \infty}x^{[n]}(j),$ by Fatou's Lemma
$$L x^* \leq-\Lambda^{{\infty}}{x}^*.$$
Using the fact that ${M}(t)$ is a non negative matrix and 
$$\frac{\ud}{\ud t}[{M}(t){x}^*](i)=[{M}(t)L x^*](i),\qquad i\in \N,$$
we find that
$$[{M}(t){x}^*](i) \leq {\rm e}^{-\Lambda^{{\infty}}t}{x}^*(i),\qquad i\in \N.$$
Since $x^*(1)=1$, $x^*$ is a right $\Lambda_{\infty}$-subinvariant vector. By applying Proposition \ref{subinvariant} we have that $\Lambda^{{\infty}} \leq \Lambda$  and therefore $x^*$ is a right $\Lambda$-subinvariant vector. The last part of the claim is true due to Proposition \ref{invariant}.
\end{proof}
\noindent Any vector ${x}\in\R^n$ can be extended to a vector $ u\in\R^{\N}$ by the natural inclusion map $u(i)=x(i)\Indi{i\leq n}$.  Since it will be clear in which space we intend to use the vector, we make an abuse of notation, and in the future we will denote both with ${x}$. 

%%%%%%%%%%%%%%%%%%%%%%%%%%%%%%%%%%%%%%%%%%%%%%%%%%%%%%%%%%%%%%%%%%%%%%%%%%%%%%%%%%%%%%%%%%%%%%%%%%%%%%%%%%%%%%%%%%%%%%%%%%%%%%%%%%%%%%%%%%%%%%%%%%%%%%%%%%%%%%%%%%%%%%%%%%%%%%%%%%%%%%%%section martingales and spine
\section{Spine decomposition}\label{spine1}
 
According to Dynkin's theory of exit measures \cite{dyn93} it is possible to describe the mass of $X$ as it first
exits the growing family of domains $[0,t)\times[n]$ as a sequence of random measures, known as {\it branching Markov exit measures}, which we denote by $\{X^{[n],t}: t\geq 0\}$.  Informally, the measure $X^{[n],t}$ is the distribution of the mass obtained by `freezing' the mass of the MCSBP when it is outside $[0,t)\times[n]$ for the first time. See \cite[Chapter 3]{dyn02} for  details of branching Markov exit measures. 
We recover here some of its basic properties. First, $X^{[n],t}$ has support on $(\{t\}\times[n])\cup ([0,t]\times [n]^c)$. Moreover, under $\{t\}\times[n],$ 
$$X^{[n],t}( \{t\}\times B)= X_t^{[n]}(B),$$
 for each $B\subset [n]$. 
 We use the obvious notation that for all ${f}\in\mca{B}^+([0,t]\times \N)$,
 \[
 \langle {f},X^{[n],t}\rangle=\sum_{i \in[n]} f(t,i)X^{[n],t}(\{ t\}, i) + \sum_{i\in[n]^c}\int_0^t f(s,i)X^{[n],t}(\ud s, i).
 \]

 We have that for all ${\mu}\in M([0,t]\times \N)$, and ${f}\in\mca{B}^+([0,t]\times \N)$
\begin{align}\label{exitmeasure1}
\mbf{E}_{\mu}[{\rm e}^{-\langle {f}, X^{[n],t}\rangle}]=\exp\{-\langle V^{[n],t}_0{f}, {\mu}\rangle\},
\end{align}
where, for $t\geq r\geq 0$, $V^{[n],t}_rf:[n]\rightarrow [0,\infty)$ is the unique non-negative solution to
\begin{equation}
V^{[n],t}_rf(i)= \left\{ \begin{array}{lcl}
f(t,i)-\int_r^{t}\big[{\psi}(i,{V_s^{[n],t}}f(i))+{\phi}(i,V^{[n],t}_sf)\big]{\ud} s & \mbox{ if } & i\leq n\\
& & \\
f(r,i) & \mbox{ if } & i>n.
\end{array}
\right.
\label{killedsgp}
\end{equation}
An important observation for later is that if the value of $f$ doesn't depend on time $t$ (temporal homogeneity), then 
\begin{equation}
V^{[n],t}_rf = V^{[n],t-r}_0f, \label{sphom}
\end{equation}
for all ${f}\in\mca{B}^+([0,t]\times \N)$. Moreover, as a process in time, $X^{[n],\cdot}=\{X^{[n],t}:t\geq 0\}$ is a MCSBP with local mechanism $\psi^{[n]}=\psi(i,z)\Indi{i\leq n}$ and non-local mechanism $\phi^{[n]}=\phi(i,f)\Indi{i\leq n}$.

Let denote by $\mathbf{N}_i$ the excursion measure of the $(\psi^{[n]},\phi^{[n]})$-MCSBP corresponding to $\mathbf{P}_{\delta_i}$. To be more precise, Dynkin and Kuznetsov (\cite{dynkuz}) showed that associated to the laws $\{\mathbf{P}_{\delta_i}: i\in\N\}$ are measures $\{\mathbf{N}_i:i\in\N\}$, defined on the same measurable space, which satisfy
$$\mathbf{N}_i(1-{\rm e}^{-\langle {f}, X^{[n],t}\rangle})=-\log\mathbf{E}_{\delta_i}({\rm e}^{-\langle {f}, X^{[n],t}\rangle}),$$
for all non-negative bounded function $f$ on $\N$ and $t\geq 0$. Intuitively speaking, the branching property implies that $\mathbf{P}_{\delta_i}$ is an infinitely divisible measure on the path space of $X^{[n],\cdot}$ and the previous equation is a ``L\'evy--Khinchine''
formula in which $\mathbf{N}_i$ plays the role of its ``L\'evy measure". A particular feature of $\mathbf{N}_i$ that we shall use later is that 
\begin{equation}
\mathbf{N}_i(\langle {f}, X^{[n],t}\rangle) = \mathbf{E}_{\delta_i}[\langle {f}, X^{[n],t}\rangle].
\label{N}
\end{equation}

Given two functions $x,y:\N\rightarrow [0,\infty)$ we denote by $x\circ y$ the element wise multiplication, $[x\circ y] (i)=x(i)y(i)$. Any function $g:\N\rightarrow[0,\infty)$ can be extended to a function $\bar{g}: [0,\infty)\times\N\rightarrow [0,\infty)$ such that $\bar{g}(s,i)=g(i)$.  

Let ${x}$ be a $\Lambda^{[n]}$ right invariant vector of ${M}^{[n]}$. (Note, in order to keep notation to a minimum, we prefer ${x}$ in place of the more appropriate notation ${x}^{[n]}$.)   By splitting the integral between $\{t\}\times[n]$ and $[0,t]\times [n]^c$, it is easy to show that
$$\langle \bar{x}, X^{[n],t}\rangle=\langle x, X^{[n]}_t\rangle.$$ 
Using the Markov property of exit measures, the last equality, and Proposition  \ref{martingale}, standard computations tell us that
$$Y_t^{[n]}:={\rm e}^{\Lambda^{[n]}t} \frac{\langle \bar{x},X^{[n],t}\rangle}{\langle x,{\mu}\rangle}={\rm e}^{\Lambda^{[n]}t} \frac{\langle x,X^{[n]}_t\rangle}{\langle x,{\mu}\rangle},\qquad t\geq 0,$$
is a mean one $\mbf{P}_{\mu}$-martingale. For ${\mu}\in\mathcal{M}(\N)$ such that $ \mu(\N\backslash [n])=0$, define $\widetilde{\mbf{P}}^{[n]}_{\mu}$ by the martingale change of measure

$$\frac{d\widetilde{\mbf{P}}^{[n]}_{\mu}}{d\mbf{P}_{\mu}}\Big|_{\mca{F}_t}=Y_t^{[n]}.$$

\begin{theorem}\label{spinesgpn}
	Let $\mu$ a finite measure with support in $[n]$ and $g\in\mca{B}^{+}(\N)$. Introduce the Markov chain $(\eta, \p_\cdot^{x})$ on $[n]$ with infinitesimal matrix,  $\tilde{L}^{[n]}\in M_{n\times n}$, given by 
$$\tilde{L}^{[n]}_{ij}=\frac{1}{x(i)}\left({\Delta}_{-b}+{K}_{ij}+\Ind_{\{i=j\}}\Lambda^{[n]}\right)x(j).$$

	If $X$ is a MCSBP, then
	\begin{align}
\widetilde{\mbf{E}}^{[n]}_{\mu}&\left[{\rm e}^{-\langle {f}, X^{[n],t}\rangle}\frac{\langle\bar{x}\circ \bar{g}, X^{[n],t}\rangle}{ \langle\bar{x}, X^{[n],t}\rangle} \right]=\mbf{E}_{\mu}\left[
{\rm e}^{-\langle {f}, X^{[n],t}\rangle}\right] \times\notag\\
&\e_{ x\mu}^{x}\Bigg[\exp\left\{-\int_0^t \left(2c(\eta_s)V_{0}^{[n],t-s}f(\eta_s)+\int_0^{\infty}u(1-{\rm e}^{-uV_{0}^{[n],t-s}f(\eta_s) })\ell(\eta_s,\ud u)\right)\ud s\right\}%\right.
\notag\\
&\hspace{7cm}\times%\left. 
 g(\eta_t)\underset{s\leq t}{\prod}{\Theta}_{\eta_{s-},\eta_s}^{[n],t-s}\Bigg],
	\label{semigroupdecompn}\end{align}
	where the matrices $\{\Theta^{[n],s}: s\geq 0\}$, are given by
	$${\Theta}^{ [n],t}_{i,j}=\frac{\pi_{i}(j)\beta(i)}{
		[{\Delta}_{-b} + {K} + \Lambda^{[n]}I]_{i, j}
	}
	\int_0^{\infty} u ({\rm e}^{-u\langle V_{0}^{[n],t}{f},\, \pi_{i}\rangle}-1){\rm n}(i,\ud u) +1$$
	and $$\mathbb{P}_{ x\mu}^{x}(\cdot) = \sum_{i \in [n]} \frac{ x(i)\mu(i)}{\langle {x}, {\mu}\rangle}\mathbb{P}^{x}_i(\cdot),$$
with  an obviously associated expectation operator $\mathbb{E}_{ x\mu}^{x}(\cdot)$.
\end{theorem}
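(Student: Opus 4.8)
The plan is to establish the spine decomposition identity by computing the left-hand side directly from the change of measure, and recognizing the resulting expression as an expectation over a spine process. First I would observe that the martingale $Y_t^{[n]}$ appearing in the change of measure can be written as $\mathrm{e}^{\Lambda^{[n]}t}\langle \bar{x}, X^{[n],t}\rangle / \langle x, \mu\rangle$, so that under $\widetilde{\mbf{P}}^{[n]}_\mu$ the quantity inside the expectation becomes
\begin{align}
\widetilde{\mbf{E}}^{[n]}_{\mu}\left[{\rm e}^{-\langle {f}, X^{[n],t}\rangle}\frac{\langle\bar{x}\circ \bar{g}, X^{[n],t}\rangle}{ \langle\bar{x}, X^{[n],t}\rangle} \right] = \frac{{\rm e}^{\Lambda^{[n]}t}}{\langle x,\mu\rangle}\,\mbf{E}_{\mu}\left[{\rm e}^{-\langle {f}, X^{[n],t}\rangle}\langle\bar{x}\circ \bar{g}, X^{[n],t}\rangle\right].
\end{align}
Thus the problem reduces to evaluating the ``tilted first moment'' $\mbf{E}_{\mu}[{\rm e}^{-\langle f, X^{[n],t}\rangle}\langle\bar{x}\circ\bar{g}, X^{[n],t}\rangle]$, which I would obtain by differentiating the Laplace functional: replacing $f$ by $f + \theta(\bar{x}\circ\bar{g})$ in $\eqref{exitmeasure1}$ and taking $-\partial_\theta$ at $\theta = 0$. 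This brings in the Gateaux derivative of the cumulant $V^{[n],t}_0$ along the direction $\bar{x}\circ\bar{g}$.

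Next I would set up the linear evolution equation satisfied by this directional derivative. Writing $G_r^{[n],t}(i)$ for the derivative of $V^{[n],t}_r(f + \theta(\bar{x}\circ\bar{g}))$ in $\theta$ at $\theta=0$, differentiating $\eqref{killedsgp}$ yields a linear integral equation for $G$ whose coefficients are the $\theta=0$ derivatives of $\psi(i,\cdot)$ and $\phi(i,\cdot)$ evaluated at $V^{[n],t}_s f$. The key computation is to read off these derivatives explicitly. From $\eqref{defpsi}$ the derivative of $\psi(i,z)$ in $z$ is $b(i) + 2c(i)z + \int_0^\infty u(1-{\rm e}^{-zu})\ell(i,\ud u)$, while from $\eqref{defzeta}$ the derivative of $\phi(i,\cdot)$ along $\pi_i$ contributes the non-local term $-\beta(i)[d(i) + \int_0^\infty u\,{\rm e}^{-u\langle V^{[n],t}_s f,\pi_i\rangle}{\rm n}(i,\ud u)]\langle \cdot,\pi_i\rangle$. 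The plan is to recognize this linear equation as a Feynman--Kac semigroup driven by the generator $\tilde{L}^{[n]}$ (the Doob $h$-transform of $L^{[n]}+\Lambda^{[n]}I$ by the right eigenvector $x$) together with a multiplicative potential and a jump-modification factor; the Doob transform by $x$ is precisely what absorbs the prefactor $\mathrm{e}^{\Lambda^{[n]}t}/\langle x,\mu\rangle$ and converts the linear semigroup into the probability measure $\mathbb{P}^x_{x\mu}$.

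The crucial identification is to match terms: the potential $2c(\eta_s)V_0^{[n],t-s}f(\eta_s) + \int_0^\infty u(1-{\rm e}^{-uV_0^{[n],t-s}f(\eta_s)})\ell(\eta_s,\ud u)$ comes from the local part of $\psi'$ after subtracting the $b(i)$ term already absorbed into $\tilde{L}^{[n]}$, while the multiplicative jump factors $\Theta^{[n],t-s}_{\eta_{s-},\eta_s}$ arise from the non-local branching: each jump of the spine from $i$ to $j$ carries the ratio of the tilted off-spine mass rate to the untilted rate, which after normalizing by the entry $[\Delta_{-b}+K+\Lambda^{[n]}I]_{ij}$ governing the spine jump gives exactly the stated $\Theta$. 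Here I would invoke temporal homogeneity $\eqref{sphom}$ to replace $V_s^{[n],t}$ by $V_0^{[n],t-s}$ throughout, and use that the spine $\eta$ has generator $\tilde{L}^{[n]}$ so that its jump rates encode the off-diagonal entries of $\Delta_{-b}+K$. I expect the main obstacle to be the careful bookkeeping of the jump contributions: one must decompose the action of the non-local operator $K$ along the spine into a part that drives the spine motion (giving the Markov chain $(\eta,\mathbb{P}^x)$) and a residual part that becomes the multiplicative functional $\prod_{s\le t}\Theta$, ensuring that the normalization constants match so that $\Theta^{[n],t}_{ij}\to 1$ when $f\equiv 0$ (recovering the pure martingale change of measure). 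Verifying that these pieces assemble into a genuine probabilistic spine — rather than merely a formal Feynman--Kac representation — is where the analysis requires the most care, and is the step most directly inherited from the super-diffusion arguments of $\cite{EK}$.
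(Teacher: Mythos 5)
Your proposal is correct and takes essentially the same route as the paper's proof: reduce via the change of measure to the tilted first moment, differentiate the Laplace functional of the exit measure in the direction $\bar{x}\circ\bar{g}$ to obtain a linear integral equation for the derivative, and identify that equation, through the Doob $h$-transform generator $\tilde{L}^{[n]}$, as a non-local Feynman--Kac representation whose potential and jump factors $\Theta^{[n],\cdot}$ are exactly those in the statement. The only difference is presentational: the paper carries out your ``careful bookkeeping'' step explicitly, via an integration by parts against ${\rm e}^{\widetilde{L}^{[n]}r}$ and an appeal to the non-local Feynman--Kac formula of Lemma \ref{nonlocal} in the appendix.
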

\noindent This theorem suggest that under $\widetilde{\mbf{P}}^{[n]}_{\mu}$, our process can decomposed into 2 parts. The first one is a copy of the original process and the second one can be related to some independent processes of immigration. As we will see after the proof, the process of immigration is governed by an immortal particle or spine along which two independent Poisson point process of mass immigration occur. The non-local nature of the branching mechanism induces a new additional immigration at each time the spine jumps. Moreover the distribution of this new immigration mass depends on where the spine jumped from and where it jumps to.

\begin{proof}
We start by noting that 
\begin{align*}
\widetilde{\mbf{E}}^{[n]}_{\mu}\left[{\rm e}^{-\langle {f}, X^{[n],t}\rangle}\frac{\langle\bar{ x}\circ \bar{g}, X^{[n],t}\rangle}{ \langle\bar{ x}, X^{[n],t}\rangle} \right]&=\frac{{\rm e}^{\Lambda^{[n]}t}}{\langle  x,{\mu}\rangle}\mbf{E}_{\mu}\left[\langle\bar{  x}\circ \bar{g}, X^{[n],t}\rangle
{\rm e}^{-\langle {f},\! X^{[n],t}\rangle}\right].
\end{align*} 
 Replacing $f$ by $f+\lambda \bar{ x}\circ \bar{g}$ in (\ref{exitmeasure1}) and (\ref{killedsgp}) and differentiating with respect to $\lambda$ and then setting $\lambda  =0$, we obtain
 \begin{align}
\widetilde{\mbf{E}}^{[n]}_{\mu}\left[{\rm e}^{-\langle {f}, X^{[n],t}\rangle}\frac{\langle\bar{x}\circ \bar{g}, X^{[n],t}\rangle}{ \langle\bar{x}, X^{[n],t}\rangle} \right]&=\mbf{E}_{\mu}\left[
{\rm e}^{-\langle {f},\! X^{[n],t}\rangle}\right]\frac
{\langle  \theta^t_0, {x}\circ{\mu}\rangle}{\langle x,{\mu}\rangle},\nonumber\\
&=\mbf{E}_{\mu}\left[
{\rm e}^{-\langle {f},\! X^{[n],t}\rangle}\right]\underset{i\leq n}{\sum}\frac{x(i)\mu_i}{\langle x,{\mu}\rangle}\theta^t_0(i),
\label{suma h}
\end{align}
where for $t\geq r\geq 0$, $
\theta^t_r$ is the vector with entries
\[
 \theta^t_r(i) := \frac{1}{x(i)}{\rm e}^{\Lambda^{[n]}(t-r)}\left.\frac{\partial}{\partial \lambda} V^{[n],t}_r[f+\lambda \bar{x}\circ \bar{g}](i)\right|_{\lambda  =0},  \qquad i\in[n].
\]  
So that, in particular,  $\theta^t_t(i) = g(i)$, $i\in[n]$, and additionally, $\theta^t_r(i) = 0$ for $i>n$ and $r\leq t$. Note that the temporal homogeneity property (\ref{sphom}) implies that $ \theta^t_r(i) =  \theta^{t-r}_0(i)$, $i\in[n]$, $t\geq r\geq 0$.
Moreover, $\theta^t_r(i)$, $i\in[n]$, is also the
 unique solution to
\begin{align*}
\theta^t_r(i) &=g(i)-\int_r^{t}\theta^t_s(i)\left[2c(i)V^{[n],t-s}_0f(i)+\int_0^{\infty}u(1-{\rm e}^{-uV^{[n],t-s}_0f(i)})\ell(i,\ud u)\right]\ud s \nonumber\\
&\hspace{2cm}+x(i)^{-1}\int_r^{ t}\big[( {\Delta}_{-b} + {K}+\Lambda^{[n]}{I})\textcolor{black}{{x}\circ \theta^t_s}\big](i)
\ud s\notag\\
&\hspace{2cm}+\int_r^{ t}
\langle \textcolor{black}{{{\theta}}^t_s},\, \pi^{x}_{i}\rangle\beta(i)\int_0^{\infty} u ({\rm e}^{-u\langle V^{[n],t-s}_0{f},\, \pi_{i}\rangle}-1){\rm n}(i,\ud u)\ud s,\nonumber\\
\end{align*}
where
\[
\pi_i^{x}(j): = \frac{x(j)}{x(i)}\pi_i(j), \qquad, i,j\in[n].
\]

A integration by parts now ensures that 
\begin{align*}
[{\rm e}^{\widetilde{L}^{[n]}r}{\theta}^t_r](i) =&[{\rm e}^{\widetilde{L}^{[n]}t}g](i)\\
&\ -
\int_r^{t}{\rm e}^{\widetilde{L}^{[n]}s}\left[{\theta}^t_s\circ\left[2c(\cdot)V^{[n],t-s}_0f(\cdot)+\int_0^{\infty}u(1-{\rm e}^{-uV^{[n],t-s}_0f(\cdot)})\ell(\cdot,\ud u)\right]\right](i)\ud s \nonumber\\
&\hspace{1cm}+\int_r^{ t}
{\rm e}^{\widetilde{L}^{[n]}s}\left[ 
\langle {\theta}^t_s,\, \pi^{x}_{\cdot}\rangle\beta(\cdot)\int_0^{\infty} u ({\rm e}^{-u\langle V^{[n],t-s}_0{f},\, \pi_{\cdot}\rangle}-1){\rm n}(\cdot,\ud u)\ud s\right](i)\ud s.\nonumber
\end{align*}
Then appealing to temporal homogeneity, 
and the fact that $\{{\rm e}^{\widetilde{L}^{[n]}t}:t\geq 0\}$ is the semigroup of $(\eta, \mathbb{P}^{x}_\cdot)$,
\begin{align*}
\theta^t_0(i) 
=&\mathbb{E}^{x}_i[g(\eta_t)]- \mathbb{E}^{x}_i\left[
\int_0^{t}\theta^{t-s}_0(\eta_s)\left[2c(\eta_s)V^{[n],t-s}_0f(\eta_s)+\int_0^{\infty}u(1-{\rm e}^{-uV^{[n],t-s}_0f(\eta_s)})\ell(\eta_s,\ud u)\right]\ud s
\right]\nonumber\nonumber\\ &\hspace{1.5cm}+\mathbb{E}^{x}_i\left[\int_0^{ t}
\langle {\theta}^{t-s}_0,\, \pi^{x}_{\eta_s}\rangle\beta(\eta_s)\int_0^{\infty} u ({\rm e}^{-u\langle V^{[n],t-s}_0{f},\, \pi_{\eta_s}\rangle}-1){\rm n}(\eta_s,\ud u)\ud s\right]\nonumber\\
=&\mathbb{E}^{x}_i[g(\eta_t)]- \mathbb{E}^{x}_i\left[
\int_0^{t}\theta^{t-s}_0(\eta_s)\left[2c(\eta_s)V^{[n],t-s}_0f(\eta_s)+\int_0^{\infty}u(1-{\rm e}^{-uV^{[n],t-s}_0f(\eta_s)})\ell(\eta_s,\ud u)\right] \ud s
\right]\nonumber\nonumber\\
& +\mathbb{E}^{x}_i\left[\int_0^{ t}\sum_j\mathbf{1}_{(\widetilde{L}^{[n]}_{\eta_s, j} \neq 0)}
\theta^{t-s}_0(j)\left(\frac{\pi^{x}_{\eta_s}(j)\beta(\eta_s)}{\widetilde{L}^{[n]}_{\eta_s, j}}\int_0^{\infty} u ({\rm e}^{-u\langle V^{[n],t-s}_0{f},\, \pi_{\eta_s}\rangle}-1){\rm n}(\eta_s,\ud u) \right)     \widetilde{L}^{[n]}_{\eta_s, j}\ud s\right]
\end{align*}
(Note, in the last equality, we have used that $\widetilde{L}^{[n]}_{\eta_s, j}=0$ if and only if $\pi_{\eta_s}(j) = 0$). We now see from Lemma \ref{nonlocal} in the appendix that 
{\small \begin{align*}
\theta^{t}_0(i)= &\e_{i}^{x}\left[\exp\left\{-\int_0^t \left(2c(\eta_s)V^{[n],t-s}_0f(\eta_s)+\int_0^{\infty}u(1-{\rm e}^{-uV^{[n],t-s}_0f(\eta_s) })\ell(\eta_s,\ud u)\right)\ud s\right\} \underset{s\leq t}{\prod}{\Theta}^{[n],t-s}_{\eta_{s-},\eta_s}\right],
\end{align*}}
as required.
\end{proof}

Fix  $\mu$ as  a finite measure with support in $[n]$. As we said before, Theorem \ref{spinesgpn} suggests that  the process $(X^{[n],\cdot},\widetilde{\mathbf{P}}_{\mu})$ is equal in law to a process $\{\Gamma_t : t\geq 0\}$,  whose law is henceforth denoted by ${\rm P}_{\mu}$, where
\begin{equation}
\Gamma_t = X'_t + \sum_{s\in D_t^{\rm c}} X^{{\rm c}, s}_{t-s} + \sum_{s\in D^{\rm d}_t}X^{{\rm d}, s}_{t-s} + \sum_{s\in D_t^{\rm j}}X^{{\rm j}, s}_{t-s}, \qquad t\geq 0,
\label{spine}
\end{equation}
such that  $X'$ is an independent copy of $(X^{[n],\cdot},\mathbf{P}_{\mu})$, the countable sets $ D^{\rm c}_\cdot,D^{\rm d}_\cdot, D^{\rm j}_\cdot $ and processes $X^{{\rm c}, s}_{\cdot}$, $X^{{\rm d}, s}_{\cdot}$ and $X^{{\rm j}, s}_{\cdot}$ are defined through a process of immigration as follows: Given the path of the Markov chain $(\eta, \mathbb{P}^{x}_{{x}\mu})$, 

\bigskip

\noindent {\bf [continuous immigration]}  in a Poissonian way  an $(\psi^{[n]},\phi^{[n]})$-MCSBP $X^{{\rm c},s}_{\cdot}$ is immigrated at $(s, \eta_s)$ with rate $\ud s\times 2c(\eta_s)d\mathbf{N}_{\eta_s}$. The almost surely countable
set of immigration times is denoted by $D^{\rm c}$ and $D^{\rm c}_t:=D^{\rm c}\cap(0,t]$,
\bigskip

\noindent {\bf [discontinuous immigration]} in a Poissonian way  an $(\psi^{[n]},\phi^{[n]})$-MCSBP $X^{{\rm d}, s}_\cdot$ is immigrated at $(s, \eta_s)$ with rate $\ud s\times \int_0^\infty u \ell(\eta_s, \ud u)\mathbf{P}_{u\delta_{\eta_s}}$. The almost surely countable
set of immigration times is denoted by $D^{\rm d}$ and $D^{\rm d}_t:=D^{\rm d}\cap(0,t]$,

\bigskip

\noindent {\bf [jump immigration]} at each jump time $s$ of $\eta$, an $(\psi^{[n]},\phi^{[n]})$-MCSBP $X^{{\rm j}, s}_\cdot$ is immigrated  at $(s, \eta_s)$
with law $\int_0^\infty \nu_{\eta_{s-},\eta_s}(\ud u)\mathbf{P}_{u\pi_{\eta_{s-}}}$, where, for $i,j$ in the range of $\eta$,

\[
\nu_{i,j}(\ud u) = \frac{[{\Delta}_{-b} +{I}\Lambda^{[n]}]_{i, j}\textcolor{black}{+\beta(i)d(i)\pi_i(j)}
}{[{\Delta}_{-b} + {K} + {I}\Lambda^{[n]}]_{i, j}
}\delta_0(\ud u) + \frac{\pi_{i}(j)\beta(i)}{[{\Delta}_{-b} + {K} + {I}\Lambda^{[n]}]_{i, j}
}u{\rm n}(i,\ud u).
\]
$D^{\rm j}$ denotes the set of jump times of $\eta$ and we denote by $D^{\rm j}_t$ the jump times before $t$.

\bigskip

\noindent Given $\eta$, all the processes are independent. We remark that we suppressed the dependence on $n$ of the processes $X',$ $X^{{\rm c}, s}_{\cdot}$, $X^{{\rm d}, s}_{\cdot}$, $X^{{\rm j}, s}_{\cdot}$ and $\Gamma $ in order to have a nicer notation. 

Observe that the processes $X^{\mbf{c}}$, $X^{\mbf{d}}$ and $X^{\mbf{j}}$ are initially zero valued, therefore, if $\Gamma_0=\mu$ then $X_0'=\mu$. Moreover  $(\eta,P_{\mu})$ is equal in distribution to $(\eta, \mathbb{P}^{x}_{{x}\mu})$. 
%\san{By construction $(\eta,\Lambda)$ is a Markov process and denote by ${P}_{(x,\mu)}$ its law. Then
%	$$ P_{\mu}(\cdot)=\frac{1}{\langle {x},{\mu}\rangle}\underset{i\in\N}{\sum}u_i\mu(i)E_{i,\mu}(\cdot).$$}
The following result corresponds to a classical spine decomposition, albeit now for the setting of an $(\psi^{[n]},\phi^{[n]})$-MCSBP. Note, we henceforth refer to the process $\eta$ as the {\it spine}.

\begin{remark}\rm
The inclusion of the immigration process indexed by $\rm j$ appears to be a new feature not seen before in previous spine decompositions and is a consequence of non-local branching. Simultaneously to our work, we learnt that a similar phenomenon has been observed by Chen, Ren and Song \cite{CRS}.
\end{remark}

\begin{theorem}[Spine decomposition]\label{gamma}
Suppose that $\mu$ as  a finite measure with support in $[n]$. Then $(\Gamma, {\rm P}_{\mu})$ is equal in law to $(X^{[n],\cdot}, \widetilde{\mathbf{P}}_{\mu})$.
\end{theorem}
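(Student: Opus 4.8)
The plan is to prove the distributional identity $(\Gamma, {\rm P}_{\mu}) \eqd (X^{[n],\cdot}, \widetilde{\mathbf{P}}_{\mu})$ by matching Laplace functionals, using Theorem~\ref{spinesgpn} as the essential computational input. The key observation is that the left-hand side of \eqref{semigroupdecompn}, namely $\widetilde{\mbf{E}}^{[n]}_{\mu}[{\rm e}^{-\langle f, X^{[n],t}\rangle}\langle \bar x \circ \bar g, X^{[n],t}\rangle / \langle \bar x, X^{[n],t}\rangle]$, is precisely the $\widetilde{\mathbf{P}}_\mu$-expectation of a Laplace-type functional weighted by a size-biasing factor. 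The strategy is to show that the analogous weighted Laplace functional computed under ${\rm P}_\mu$ for the explicitly constructed process $\Gamma$ agrees with the right-hand side of \eqref{semigroupdecompn}. Since $g \in \mca{B}^+(\N)$ is arbitrary, matching these functionals for all $g$ (and all $f$) determines the law uniquely and yields the claimed equality in distribution.

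First I would compute the weighted Laplace functional of $\Gamma$ directly from its pathwise description \eqref{spine}. Conditioning on the spine path $\eta$, the four ingredients $X'$, $X^{{\rm c},s}$, $X^{{\rm d},s}$, $X^{{\rm j},s}$ are independent, so the Laplace functional $\stE_\mu[{\rm e}^{-\langle f, \Gamma_t\rangle}]$ factors as the product of $\mbf{E}_\mu[{\rm e}^{-\langle f, X^{[n],t}\rangle}]$ (the contribution of the independent copy $X'$) with the conditional Laplace functionals of the three immigration families. For the two Poissonian immigration streams I would apply the exponential (Campbell) formula for Poisson point processes: the continuous stream contributes a factor $\exp\{-\int_0^t 2c(\eta_s)\mathbf{N}_{\eta_s}(1-{\rm e}^{-\langle f, X^{[n],t-s}\rangle})\,\ud s\}$, which by the defining relation of $\mathbf{N}_i$ equals $\exp\{-\int_0^t 2c(\eta_s)V^{[n],t-s}_0 f(\eta_s)\,\ud s\}$, and similarly the discontinuous stream produces the term $\exp\{-\int_0^t \int_0^\infty u(1-{\rm e}^{-u V^{[n],t-s}_0 f(\eta_s)})\ell(\eta_s,\ud u)\,\ud s\}$. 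The jump immigration is indexed by the (discrete) jump times of $\eta$, so it contributes a multiplicative product $\prod_{s\le t}$ over jumps, and evaluating $\int_0^\infty \mathbf{P}_{u\pi_{i}}[{\rm e}^{-\langle f, X^{[n],t-s}\rangle}]\,\nu_{i,j}(\ud u) = \int_0^\infty {\rm e}^{-u\langle V^{[n],t-s}_0 f, \pi_i\rangle}\nu_{i,j}(\ud u)$ against the measure $\nu_{\eta_{s-},\eta_s}$ should reproduce exactly the matrix entries ${\Theta}^{[n],t-s}_{\eta_{s-},\eta_s}$.

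The size-biasing is handled by differentiation: introducing the perturbation $f + \lambda\,\bar x\circ\bar g$ and differentiating the Laplace functional of $\Gamma$ at $\lambda = 0$ extracts the factor $\langle \bar x\circ\bar g, \Gamma_t\rangle$, and since derivatives of the immigration terms bring down the additional $g(\eta_t)$ factor (the $\bar x$ weight being absorbed into the $h$-transform defining $(\eta, \mathbb{P}^x_\cdot)$), the result matches the right-hand side of \eqref{semigroupdecompn} term by term. Because the Markov chain $(\eta, {\rm P}_\mu)$ starting from the size-biased initial law is identified with $(\eta, \mathbb{P}^x_{x\mu})$, the spine dynamics coincide exactly. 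Thus the weighted functionals agree, and equality in law follows.

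The main obstacle I anticipate is the bookkeeping in the jump-immigration term: one must verify that the two-part measure $\nu_{i,j}(\ud u)$ — the atom at $0$ weighted by $([{\Delta}_{-b}+{I}\Lambda^{[n]}]_{i,j}+\beta(i)d(i)\pi_i(j))/[{\Delta}_{-b}+{K}+{I}\Lambda^{[n]}]_{i,j}$ together with the absolutely continuous part $\pi_i(j)\beta(i)u\,{\rm n}(i,\ud u)/[{\Delta}_{-b}+{K}+{I}\Lambda^{[n]}]_{i,j}$ — integrates against ${\rm e}^{-u\langle V^{[n],t-s}_0 f, \pi_i\rangle}$ to give precisely ${\Theta}^{[n],t-s}_{i,j}$, and that the rate at which these immigrations occur along the spine is exactly the off-diagonal intensity $\widetilde{L}^{[n]}_{i,j}$ of the $h$-transformed chain $(\eta,\mathbb{P}^x_\cdot)$. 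Checking that the normalising denominators cancel correctly against the spine jump rates, and that the $\delta_0$ atom correctly accounts for jumps of $\eta$ that immigrate zero mass, is where the algebra is most delicate; this is essentially the content that forced the introduction of Lemma~\ref{nonlocal} in the proof of Theorem~\ref{spinesgpn}, and I would lean on that same identity here.
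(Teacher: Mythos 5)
Your computational plan is sound as far as it goes: conditioning on the spine, applying Campbell's formula to the two Poissonian immigration streams, integrating $\nu_{\eta_{s-},\eta_s}$ against the exit-measure Laplace functionals at jump times, and extracting the size-biasing by differentiating in $\lambda$ does reproduce the right-hand side of \eqref{semigroupdecompn}; this is exactly the sense in which the paper says the semigroup identification ``follows immediately from Theorem \ref{spinesgpn}''. The genuine gap is in your final step. What your matching of functionals delivers is, for each \emph{fixed} $t$, equality of the law of $\Gamma_t$ (indeed of the pair $(\Gamma_t,\eta_t)$, suitably interpreted) with the corresponding fixed-time law under $\widetilde{\mathbf{P}}^{[n]}_{\mu}$. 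Equality of one-dimensional marginals does not determine the law of a process: you need all finite-dimensional distributions. The process $X^{[n],\cdot}$ under $\widetilde{\mathbf{P}}^{[n]}_{\mu}$ is Markov (it arises from a martingale change of measure of a Markov process), so marginal equality would suffice \emph{if} you also knew that $\Gamma$ were Markov with the same semigroup --- but Markovianity of $\Gamma$ is not automatic and cannot be waved through. $\Gamma$ is a projection of the pair $(\Gamma,\eta)$: its future evolution depends on the current spine position $\eta_t$, which is not measurable with respect to $\Gamma_t$, and projections of Markov processes are in general not Markov.

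This is precisely where the paper's proof spends its effort. It first observes that $((\Gamma_t,\eta_t),{\rm P}_{\mu})$ is Markov, and then shows that the conditional law of the spine given the mass configuration is the size-biased law
\begin{equation*}
{\rm E}_{\mu}[\eta_t=i\mid \Gamma_t]=\frac{x(i)\Gamma_t(i)}{\langle \bar{x},\Gamma_t\rangle},\qquad i\leq n,
\end{equation*}
which depends on the path of $\Gamma$ only through its current value; by the argument of Theorem 5.2 of \cite{klmr}, this implies that $\Gamma$ alone is Markov, and equality in law then follows by combining the Markov property with the semigroup identification and the common initial value. Ironically, the $g$-weighted identity you plan to verify is exactly what is needed to establish this conditional-law formula (apply the definition of conditional expectation with respect to $\sigma(\Gamma_t)$ to the identity ${\rm E}_{\mu}[{\rm e}^{-\langle f,\Gamma_t\rangle}g(\eta_t)]={\rm E}_{\mu}[{\rm e}^{-\langle f,\Gamma_t\rangle}\langle \bar{x}\circ\bar{g},\Gamma_t\rangle/\langle \bar{x},\Gamma_t\rangle]$). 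What is missing from your write-up is the recognition that this identity must be deployed to prove the Markov property of $\Gamma$, rather than being cited as if fixed-time distributional equality alone ``determines the law uniquely''.
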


%The proof of this theorem is similar to the prove of Theorem 5.2 in \cite{klmr}, so we omit some computation. 

\begin{proof}
The proof is designed in two steps. First we  show that $\Gamma$ is a Markov process. Secondly we show that $\Lambda$ has the same semigroup as $X^{[n],\cdot}$. In fact the latter follows immediately from Theorem \ref{spinesgpn} and hence we focus our attention on the first part of the proof. Observe that $((\Gamma_t,\eta_t), {\rm P}_{\mu})$ is a Markov process. By the same argument that appeared in Theorem 5.2 in \cite{klmr}, if we prove
\begin{align}
\label{condexplambda}
{\rm E}_{\mu}[\eta_t=i\mid \Gamma_t]=\frac{x(i)\Gamma_t(i)}{\langle \bar{{x}},\Gamma_t\rangle}, \qquad i\leq n,
\end{align}
then, $(\Gamma_t,{\rm P}_{\mu})$ is a Markov process.
By conditioning on $\eta$, using the definition of $\Gamma$, the equation \ref{semigroupdecompn} and the fact that $(\Gamma_t,{\rm P}_{\mu})$ is equal in law to  $(X_t, \widetilde{\mathbf{P}}_{\mu})$, for each $t$,  we obtain
$${\rm E}_{\mu}\left[{\rm e}^{-\langle {f}, \Gamma_t\rangle}g(\eta_t)\right]={\rm E}_{\mu}\left[{\rm e}^{-\langle {f}, \Gamma_t\rangle}\frac{\langle \bar{{x}}\circ \bar{g}, \Gamma_t\rangle}{\langle {x}, \Gamma_t\rangle}\right],\qquad \mbox{for all } {f},\ g \mbox{ measurables}.$$
The definition of conditional expectation implies (\ref{condexplambda}).
\end{proof}

%
%\begin{align*}
%{\Theta}^{t}_{i,j} &= \frac{\pi^{x}_{i}(j)\beta(i)}{\widetilde{L}_{i, j}}\int_0^{\infty} u ({\rm e}^{-u\langle V_{t}{f},\, \pi_{i}\rangle}-1){\rm n}(i,\ud u) +1\\
%&=\frac{\pi_{i}(j)\beta(i)}{
%[{Q}+{\Delta}_\rho + {K} + {I}\Lambda^{[n]}]_{i, j}
%}
%\int_0^{\infty} u ({\rm e}^{-u\langle V_{t}{f},\, \pi_{i}\rangle}-1){\rm n}(i,\ud u) +1\\
%&=\frac{[{Q}+{\Delta}_\rho +{I}\Lambda^{[n]}]_{i, j}\textcolor{magenta}{+\beta(i)d(i)\pi_i(j)}
%}{[{Q}+{\Delta}_\rho + {K} + {I}\Lambda^{[n]}]_{i, j}
%} + 
%\frac{\pi_{i}(j)\beta(i)}{[{Q}+{\Delta}_\rho + {K} + {I}\Lambda^{[n]}]_{i, j}
%}\int_0^{\infty} u {\rm e}^{-u\langle V_{t}{f},\, \pi_{i}\rangle}{\rm n}(i,\ud u)\\
%&=\int_0^{\infty}  {\rm e}^{-u\langle V_{t}{f},\, \pi_{i}\rangle}\nu_{i,j}(\ud u),
%\end{align*}
%where 
%\[
%\nu_{i,j}(\ud u) = \frac{[{Q}+{\Delta}_\rho +{I}\Lambda^{[n]}]_{i, j}\textcolor{magenta}{+\beta(i)d(i)\pi_i(j)}
%}{[{Q}+{\Delta}_\rho + {K} + {I}\Lambda^{[n]}]_{i, j}
%}\delta_0(\ud u) + \frac{\pi_{i}(j)\beta(i)}{[{Q}+{\Delta}_\rho + {K} + {I}\Lambda^{[n]}]_{i, j}
%}u{\rm n}(i,\ud u)
%\]

%%%%%%%%%%%%%%%%%%%%%%%%%%%%%%%%%%%%%%%%%%%%%%%%%%%%%%%%%%%%%%%%%%%%%%%%%%%%%%%%%%%%%%%%%%%%%%%%%%%%%%%%%%%%%%%%%%%%%%%%%%%%%%%%%%%%%%%%%%%%%%%%%%%%%%%%%%%%%%%%%%%%%%%%%%%%%%%%%%%%%%%%%%%%last section local extinction
\section{Martingale convergence}\label{spine2}

An important consequence of the spine decomposition in Theorem \ref{gamma} is that we can establish an absolute continuity between the measures $\mathbf{P}_{\mu}$ and $\widetilde{\mathbf{P}}^{[n]}_{\mu}$.

\begin{theorem}\label{mgcgcethrm} Fix $n\in\mathbb{N}$ and  ${\mu}\in\mathcal{M}(\mathbb{N})$ such that $\sup\{k: \mu(k)>0\}\leq n$.
The martingale $Y^{[n]}$ converges almost surely and in $L^1(\mathbf{P}_{\mu})$ if and only if 
  $\Lambda^{[n]}<0$ and that 
\begin{equation}
\sum_{i\in[n]}\int_1^\infty (x\log x) \ell(i,{\rm d}x)  + \sum_{i\in[n]}\int_1^\infty (x\log x) {\rm n}(i,{\rm d}x) <\infty,
\label{xlogx}
\end{equation}
Moreover, when these conditions fail, $\mathbf{P}_\mu(\lim_{t\to\infty}Y^{[n]}_t=0)=1$.
\end{theorem}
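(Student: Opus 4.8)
The plan is to recognize $Y^{[n]}$ as an additive martingale of a branching process and to apply the standard $L^1$-convergence theory via the spine decomposition established in Theorem \ref{gamma}. The classical criterion (going back to the Biggins/Lyons--Pemantle--Peres framework) states that a nonnegative mean-one martingale arising from a change of measure converges in $L^1$ if and only if its limit is nontrivial, and this in turn is governed by whether the spine-side quantity $\langle \bar x, \Gamma_t\rangle$ grows subexponentially or not. Since $\Lambda^{[n]}$ is the Perron--Frobenius eigenvalue on the truncated state space $[n]$, the factor $\mathrm{e}^{\Lambda^{[n]}t}$ is precisely the deterministic normalization that makes $Y^{[n]}$ a martingale, and $L^1$-convergence should hold exactly when the spine is positive recurrent (equivalently, when $\Lambda^{[n]}<0$ so that the killing/mass-creation on $[n]$ is genuinely subcritical in the Perron sense) and when the immigrated mass does not grow too fast, which is controlled by the $x\log x$-type integrability condition \eqref{xlogx}.

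The key steps, in order, would be as follows. First I would invoke the absolute continuity relation $\mathrm{d}\widetilde{\mathbf{P}}^{[n]}_\mu/\mathrm{d}\mathbf{P}_\mu|_{\mca F_t} = Y^{[n]}_t$ together with the standard measure-theoretic dichotomy: $Y^{[n]}$ converges in $L^1(\mathbf{P}_\mu)$ (equivalently, is uniformly integrable) if and only if $\widetilde{\mathbf{P}}^{[n]}_\mu \ll \mathbf{P}_\mu$, and $Y^{[n]}_\infty=0$ $\mathbf{P}_\mu$-a.s. if and only if $\widetilde{\mathbf{P}}^{[n]}_\mu \perp \mathbf{P}_\mu$, i.e. if and only if $Y^{[n]}_t\to\infty$ $\widetilde{\mathbf{P}}^{[n]}_\mu$-a.s. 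So the entire problem reduces to determining the $\widetilde{\mathbf{P}}^{[n]}_\mu$-almost-sure behaviour of $Y^{[n]}_t$, which by Theorem \ref{gamma} is the same as the $\mathrm{P}_\mu$-behaviour of $\mathrm{e}^{\Lambda^{[n]}t}\langle \bar x, \Gamma_t\rangle/\langle x,\mu\rangle$. Second, I would decompose $\langle \bar x, \Gamma_t\rangle$ along the spine decomposition \eqref{spine}: the term from $X'$ contributes like $\langle x, X^{[n]}_t\rangle$ under $\mathbf{P}_\mu$ and, after the $\mathrm{e}^{\Lambda^{[n]}t}$ normalization, behaves like the original martingale, while the three immigration sums $\sum_{D^{\rm c}_t}, \sum_{D^{\rm d}_t}, \sum_{D^{\rm j}_t}$ each contribute independently-immigrated masses along the spine $\eta$. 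Third, under $\Lambda^{[n]}<0$ one shows the immigration sums remain bounded in $L^1$ precisely under \eqref{xlogx}, using a Borel--Cantelli / dyadic-time argument: the expected mass immigrated at each Poisson or jump event is finite, and the $x\log x$ condition is exactly what guarantees that $\sum_s \mathrm{e}^{\Lambda^{[n]}(t-s)}(\text{immigrated mass at } s)$ does not blow up, via the classical computation $\sum_k \mathbf{P}(\text{mass}>\mathrm{e}^{-\Lambda^{[n]}k}) <\infty \iff \mathbb{E}[\text{mass}\log^+\text{mass}]<\infty$. Conversely, if either $\Lambda^{[n]}\geq 0$ or \eqref{xlogx} fails, I would show $Y^{[n]}_t\to\infty$ under $\widetilde{\mathbf{P}}^{[n]}_\mu$: when $\Lambda^{[n]}\geq 0$ the normalization is too weak to counteract the recurrent/transient spine mass, and when \eqref{xlogx} fails the second Borel--Cantelli lemma forces infinitely many immigration events of exponentially large mass, driving the sum to infinity.

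The main obstacle I anticipate is the careful treatment of the jump-immigration term $\sum_{s\in D^{\rm j}_t}X^{{\rm j},s}_{t-s}$, which is the genuinely new feature flagged in the remark after Theorem \ref{spinesgpn}. Unlike the continuous and discontinuous immigration, whose rates involve $c$ and $\ell$ in a form directly matched to the $x\log x$ integrability, the jump immigration has intensity $\nu_{\eta_{s-},\eta_s}(\mathrm{d}u)$ mixing an atom at $0$ with a $u\,{\rm n}(i,\mathrm{d}u)$ part, and it fires at every jump of the spine. I would need to show that the $\log$-moment condition on ${\rm n}$ in \eqref{xlogx} controls this contribution as well, which requires quantifying how often the spine $\eta$ (a positive-recurrent chain on $[n]$ when $\Lambda^{[n]}<0$, generated by $\tilde L^{[n]}$) jumps and bounding the aggregate immigrated mass by a renewal-type argument along the ergodic occupation measure of $\eta$; since $[n]$ is finite, the spine visits states at a bounded rate and the jump-immigration integrability is essentially inherited from the same $x\log x$ condition, but making this rigorous (and matching it to the converse direction) is where the real work lies. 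A secondary technical point is justifying the exchange of the $\widetilde{\mathbf{P}}^{[n]}_\mu$-almost-sure limit with the spine representation, for which I would rely on the conditional-expectation identity \eqref{condexplambda} and a conditional Borel--Cantelli lemma applied given the spine path $\eta$.
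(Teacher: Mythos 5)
Your proposal is correct and follows essentially the same route as the paper: the measure-change dichotomy (uniform integrability versus singularity of $\widetilde{\mathbf{P}}^{[n]}_{\mu}$ and $\mathbf{P}_{\mu}$), the spine decomposition of Theorem \ref{gamma}, a conditional Fatou/Borel--Cantelli argument given the spine and its immigrated masses in which \eqref{xlogx} yields subexponential growth of the large immigration increments, and the converse via $\limsup_{t\to\infty}Y^{[n]}_t=\infty$ under the tilted measure. The only caveat is your phrase ``bounded in $L^1$'': the immigrated masses need not have finite mean, so (as you in fact indicate with the $\sum_k \mathbf{P}(\text{mass}>{\rm e}^{-\Lambda^{[n]}k})$ computation, and as the paper does) one must split into small masses controlled in expectation and large masses controlled pathwise by Borel--Cantelli.
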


\begin{proof}
We follow a well established line of reasoning. Firstly we establish sufficient conditions. We know that $1/Y^{[n]}_t$ is a positive $\widetilde{\mathbf{P}}^{[n]}_{\mu}$-supermartingale and hence $\lim_{t\to\infty}Y^{[n]}_t$ exists $\widetilde{\mathbf{P}}^{[n]}_{\mu}$-almost surely. The statement of the theorem follows as soon as we can prove that $\widetilde{\mathbf{P}}^{[n]}_{\mu}(\lim_{t\to\infty}Y^{[n]}_t<\infty)=1$.

To this end,  consider the spine decomposition in Theorem \ref{gamma}. 
Suppose, given the trajectory of the spine $\eta$, that we write $(s, \Delta^{\rm d}_s, \Delta^{\rm j}_s)$, $s\geq 0$, for the  process of immigrated mass along the spine, so that $(s, \Delta^{\rm d}_s)$  has intensity 
$
{\rm d}s \times  u \ell(\eta_{s}, \ud u)
$
and, at $s$ such that $\eta_{s-}\neq \eta_s$, $ \Delta^{\rm j}_s$ is distributed according to $\nu_{\eta_{s-},\eta_s}$.
Let  $\mathcal{S} = \sigma(\eta, (s, \Delta^{\rm d}_s, \Delta^{\rm j}_s), s\geq 0)$  be the sigma algebra which informs the location of the spine and the volume of mass issued at each immigration time along the time
% Let  $\mathcal{S} = \sigma(\eta_s, s\geq 0)$ 
and write 
 \[
 Z^{[n]}_t = {\rm e}^{\Lambda^{[n]} t}\frac{\langle \bar{x}, \Gamma_t\rangle}{\langle{x}, {\mu}\rangle}.
 \]
Our objective now is to use Fatou's Lemma and show that  
\[
{\rm E}_{\mu}[\lim_{t\to\infty}Z^{[n]}_t | \mathcal{S}]\leq \liminf_{t\to\infty}{\rm E}_{\mu}[Z^{[n]}_t|\mathcal{S}]<\infty.
\]
 Given that $(\Gamma, {\rm P}_{\mu})$ is equal in law to $(X^{[n],\cdot}, \widetilde{\mathbf{P}}_{\mu})$, this ensures that $\widetilde{\mathbf{P}}^{[n]}_{\mu}(\lim_{t\to\infty}Y^{[n]}_t<\infty)=1$, thereby completing the proof. 

It therefore remains to show that $\liminf_{t\to\infty}{\rm E}_{\mu}[Z^{[n]}_t| \mathcal{S}]<\infty$. Taking advantage of the spine decomposition, we have, with the help of (\ref{N}) and the fact that $\mathbf{E}_{\mu}[Y^{[n]}_t] =1$, for  $t\geq 0$ and $\mu$ such ${\mu}\in\mathcal{M}(\mathbb{N})$ such that $\sup\{k: \mu(k)>0\}\leq n$,
\begin{align*}
\liminf_{t\to\infty}{\rm E}_{\mu}[Z^{[n]}_t|\mathcal{S}]& =\langle{x}, {\mu}\rangle +\int_0^\infty 2 c(\eta_s) {\rm e}^{\Lambda^{[n]}s}\frac{x_{\eta_s}}{\langle{x}, {\mu}\rangle}%\mathbf{E}_{\delta_{\eta_s}}(Z^{[n]}_{t-s})
{\ud}s\\
&
+\sum_{s>0}  {\rm e}^{\Lambda^{[n]}s}\Delta_s^{\rm d}\frac{x_{\eta_s}}{\langle{x}, {\mu}\rangle} %\mathbf{E}_{\Delta_s^{\rm d}\pi_{\eta_s}}(Z^{[n]}_{t-s})
+\sum_{s>0}  {\rm e}^{\Lambda^{[n]}s}\Delta_s^{\rm j}\frac{\langle{x}, \pi_{\eta_s-}\rangle}{\langle{x}, {\mu}\rangle} .
 \end{align*}
Recalling that $\Lambda^{[n]}<0$ and that $\eta$ lives on $[n]$, the first integral on the right-hand side above can be bounded above by a constant.  The two sums on the right-hand side above can be dealt with almost identically. 

It suffices to check that 
\begin{align}
\sum_{s>0}  {\rm e}^{\Lambda^{[n]}s}\mathbf{1}_{(\Delta_s^{\rm d}<1)}\Delta_s^{\rm d}&+ \sum_{s>0}  {\rm e}^{\Lambda^{[n]}s}\mathbf{1}_{(\Delta_s^{\rm j}<1)}\Delta_s^{\rm j}\notag\\
&+ \sum_{s>0}  {\rm e}^{\Lambda^{[n]}s}\mathbf{1}_{(\Delta_s^{\rm d}\geq 1)}\Delta_s^{\rm d}
+ \sum_{s>0}  {\rm e}^{\Lambda^{[n]}s}\mathbf{1}_{(\Delta_s^{\rm j}\geq 1)}\Delta_s^{\rm j}<\infty.
\label{2sums}
\end{align}
We first note that 
\begin{align*}
{\rm E}_{\mu}&\left[\sum_{s>0}  {\rm e}^{\Lambda^{[n]}s}\mathbf{1}_{(\Delta_s^{\rm d}<1)}\Delta_s^{\rm d}+ \sum_{s>0}  {\rm e}^{\Lambda^{[n]}s}\mathbf{1}_{(\Delta_s^{\rm j}<1)}\Delta_s^{\rm j} \right]
 \\
&= {\rm E}_{\mu}\left[\int_0^\infty  {\rm e}^{\Lambda^{[n]}s}\int_{(0,1)}u^2 \ell(\eta_{s}, \ud u){\ud s} \right]
+ {\rm E}_{\mu}\left[\int_0^\infty  {\rm e}^{\Lambda^{[n]}s}\int_{(0,1)}u^2 L^{[n]}_{\eta_{s-}, \eta_s}
\nu_{\eta_{s-}, \eta_s}(du)
%{\rm n}(\eta_{s-}, \ud u)
{\ud s}
 \right]\\
&\leq \int_0^\infty  {\rm e}^{\Lambda^{[n]}s}\ud s \left\{\sup_{i\in [n]}\int_{(0,1)}u^2 \ell(i, \ud u){\ud s}
+ \sup_{i,j\in [n]} \pi^{x}_{i}(j)\int_{(0,1)}u^2{\rm n}(i, \ud u){\ud s}
\right\}
<\infty.
\end{align*}

Next, note that the condition (\ref{xlogx}) ensures that, ${\rm P}_{\mu}$ almost surely, 
\[
\limsup_{s\to\infty}s^{-1} \mathbf{1}_{(\Delta_s^{\rm d}\geq 1)}\log\Delta_s^{\rm d}
+\limsup_{s\to\infty}s^{-1} \mathbf{1}_{( \Delta_s^{\rm d}\geq 1)}\log\Delta_s^{\rm j}
=0,\]
so  that both sequences $\Delta_{s}^{\rm d}$ and $\Delta_{s}^{\rm j}$ in the last two sums of  (\ref{2sums}) grow subexponentially. (Note that both of the aforesaid sequences are indexed by a discrete set of times when we insist $\{\Delta_s^{\rm d}\geq 1\}$.) Hence the second term in (\ref{2sums}) converges.

To establish necessary conditions, let us suppose that $\tau$ is the set of times at which the mass $(s, \Delta^{\rm d}_s, \Delta^{\rm j}_s)$, $s\geq 0$, immigrates along the spine. We note that for $t\in\tau$,
\begin{equation}
Z^{[n]}_t\geq   {\rm e}^{\Lambda^{[n]}t}\Delta_t^{\rm d}\frac{x_{\eta_t}}{\langle{x}, {\mu}\rangle} %\mathbf{E}_{\Delta_s^{\rm d}\pi_{\eta_s}}(Z^{[n]}_{t-s})
+ {\rm e}^{\Lambda^{[n]}t}\Delta_t^{\rm j}\frac{\langle{x}, \pi_{\eta_t-}\rangle}{\langle{x}, {\mu}\rangle} .
\label{lower}
\end{equation}
If $\Lambda^{[n]}> 0$ and (\ref{xlogx}) holds then 
\begin{equation}
\widetilde{\bf P}_\mu(\limsup_{t\to\infty}Y^{[n]}_t = \infty) = {\rm P}_\mu(\limsup_{t\to\infty}Z^{[n]}_t = \infty)=1
\label{limsupy}
\end{equation}
 on account of the term $ {\rm e}^{\Lambda^{[n]}t}$, the remaining terms on the righ-hand side of (\ref{lower}) grow subexponentially. If  $\Lambda^{[n]}= 0$ and (\ref{xlogx}) holds then, although there is subexponential growth of $(\Delta_t^{\rm j}, \Delta_t^{\rm d})$, $t\geq 0$, 
\[
\limsup_{t\to\infty}\mathbf{1}_{(\Delta_s^{\rm d}\geq 1)}\Delta_s^{\rm d}
+\limsup_{s\to\infty}\mathbf{1}_{( \Delta_s^{\rm d}\geq 1)}\Delta_s^{\rm j} =\infty
\] nonetheless. This again informs us that (\ref{limsupy}) holds. Finally if $\Lambda^{[n]}<0$ but (\ref{xlogx}) fails, then there exists an $i\in[n]$ such that $\int_1^\infty (x\log x) \ell(i,{\rm d}x) =\infty$ or $\int_1^\infty (x\log x) {\rm n}(i,{\rm d}x)=\infty$. Suppose it is the latter. Recalling that $\eta$ is ergodic, another straightforward Borel-Cantelli Lemma tells us that 
\[
\limsup_{s\to\infty}s^{-1} \mathbf{1}_{(\eta_{s-} = i,\, \Delta_s^{\rm d}\geq 1)}\log\Delta_s^{\rm j} >c,
\]
for all $c>0$,
which implies superexponential growth. In turn,  (\ref{limsupy}) holds.
The proof of the theorem is now complete as soon as we recall that (\ref{limsupy}) implies that ${\bf P}_\mu$ and $\widetilde{\bf P}_\mu$ are singular and hence $\widetilde{\bf P}_\mu(\lim_{t\to\infty}Y^{[n]}_t =0)=1$.
\end{proof}

\section{Local and global extinction}\label{proofs}
\begin{lemma}\label{limsup}
For any finite $A\subset \N$ and any $\mu$,
$$\mbf{P}_{\mu}\left(\underset{t\rightarrow\infty}{\limsup} \langle\mathbf{1}_A,X_t\rangle\in\{0,\infty\}\right)=1.$$
\end{lemma}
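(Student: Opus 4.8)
The plan is to prove a zero–infinity law for the limsup of the mass on any finite set $A$, exploiting the branching structure together with the spectral/martingale machinery developed above. The key idea is that $\limsup_{t\to\infty}\langle\mathbf{1}_A,X_t\rangle$ is a tail-type quantity whose law should obey a dichotomy driven by the branching property. First I would reduce to the truncated process $X^{[n]}$ for $n$ large enough that $A\subseteq[n]$; since killing mass outside $[n]$ only decreases the mass on $A$, we have $\langle\mathbf{1}_A,X^{[n]}_t\rangle\leq\langle\mathbf{1}_A,X_t\rangle$, so it suffices to understand the limsup for the infinite process directly, but the truncation lets us bring in the Perron--Frobenius eigenpair $(\Lambda^{[n]},x^{[n]})$ and the martingale $Y^{[n]}$.

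The core mechanism I would use is the branching property recorded just after Theorem~\ref{MCSBP}: under $\mathbf{P}_{\mu_1+\mu_2}$ the process splits as a sum of independent copies. The strategy is to show that the event
\[
\mca{A}_c:=\Big\{\underset{t\rightarrow\infty}{\limsup}\ \langle\mathbf{1}_A,X_t\rangle > c\Big\}
\]
has $\mathbf{P}_{\delta_i}$-probability that is either $0$ or $1$ for each starting point, and that this probability is independent of $c\in(0,\infty)$. The independence of $c$ is the crucial point: by the branching property, starting from $k$ independent unit masses at $i$, the limsup is at least the maximum of the $k$ individual limsups, so if there is positive probability $p>0$ that the limsup exceeds $c$, then from a large initial mass the limsup exceeds any fixed level with probability close to one. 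More precisely, I would argue that $q(i):=\mathbf{P}_{\delta_i}(\limsup_t\langle\mathbf{1}_A,X_t\rangle>0)$ satisfies, via the branching property and the superprocess scaling, that either $\limsup_t\langle\mathbf{1}_A,X_t\rangle=0$ a.s.\ or $\limsup_t\langle\mathbf{1}_A,X_t\rangle=\infty$ a.s. The standard way to get this for CSBP-type processes is a self-similarity/branching argument: conditionally on the limsup being positive, the Markov property at a sequence of times where the mass on $A$ is bounded below forces infinitely many independent ``restarts,'' each with a fixed positive probability of pushing the mass above any given level, and Borel--Cantelli then yields $\infty$.

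Concretely, I would proceed as follows. Fix $A$ finite and $\mu$. On the event that $\limsup_t\langle\mathbf{1}_A,X_t\rangle=:\ell$ is finite and strictly positive, pick a deterministic level $0<c<\ell$; there exist times $t_n\uparrow\infty$ with $\langle\mathbf{1}_A,X_{t_n}\rangle>c$. Applying the Markov property at each $t_n$ and the branching property, the mass on $A$ after $t_n$ stochastically dominates the mass generated by an independent copy of the process started from at least $c$ units of mass distributed on $A$. Because the movement generator is irreducible (assumption (A), via Lemma~\ref{irreducible}), from any such configuration there is a uniform lower bound $\varepsilon>0$ on the probability of the mass on $A$ subsequently exceeding $2\ell$ within one unit of time. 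Since this happens at infinitely many asymptotically independent epochs, a conditional Borel--Cantelli argument forces $\limsup_t\langle\mathbf{1}_A,X_t\rangle\geq 2\ell$ almost surely on the event $\{0<\ell<\infty\}$, contradicting $\ell=\limsup$. Hence $\mathbf{P}_\mu(0<\ell<\infty)=0$, which is exactly the claim.

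The main obstacle I anticipate is making the ``uniform lower bound $\varepsilon$'' and the conditional independence rigorous: the restart configurations at the times $t_n$ are random (we only know the total mass on $A$ exceeds $c$, not its precise distribution across the finitely many sites of $A$), so I must ensure the uniform-in-configuration lower bound on the probability of an upward excursion, using irreducibility of $M(t)$ to guarantee that mass placed anywhere on $A$ can be spread and amplified. A clean way to handle this is to work with the excursion measures $\mathbf{N}_i$ and the first-moment formula \eqref{N}, controlling the probability of an excursion reaching high mass via the lower bound on $M(t)_{ij}$ supplied by Lemma~\ref{irreducible}. Alternatively, one may phrase the whole argument through the supermartingale $W_t={\rm e}^{\lambda t}\langle x,X_t\rangle$ of Proposition~\ref{martingale}, using its almost sure convergence to transfer the dichotomy from $\langle x,X_t\rangle$ to $\langle\mathbf{1}_A,X_t\rangle$ on the finite set $A$; the equivalence of $\langle\mathbf{1}_A,\cdot\rangle$ and $\langle x\mathbf{1}_A,\cdot\rangle$ up to positive constants on the finite set $A$ makes this transfer routine once the dichotomy is established for one of them.
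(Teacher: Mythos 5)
Your overall architecture matches the paper's: on the event that the limsup exceeds some $\epsilon>0$, extract stopping times $T_n$ at which the mass on $A$ exceeds $\epsilon$, use the strong Markov and branching properties to obtain a positive conditional probability of exceeding a level $K$ one unit of time later, and conclude via the extended (conditional) Borel--Cantelli lemma that the limsup exceeds $K$ almost surely on that event; then let $K\uparrow\infty$ and $\epsilon\downarrow 0$. However, the step you yourself flag as the main obstacle --- the uniform lower bound $\varepsilon>0$ for the probability of pushing the mass above an \emph{arbitrary} level --- is genuinely missing, and the tools you propose to supply it cannot do so. Irreducibility of $M(t)$ and the first-moment identities (via $\mathbf{N}_i$ or \eqref{N}) only control expectations: a lower bound on $\mathbf{E}_\nu[\langle\mathbf{1}_A,X_1\rangle]$ gives no lower bound on $\mathbf{P}_\nu(\langle\mathbf{1}_A,X_1\rangle>K)$ for $K$ far above the mean (Paley--Zygmund-type arguments would at best reach a fixed fraction of the mean, and would require second moments). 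Likewise, almost sure convergence of the supermartingale $W_t={\rm e}^{\lambda t}\langle x,X_t\rangle$ of Proposition \ref{martingale} controls the weighted \emph{global} mass; it does not transfer any dichotomy to $\langle\mathbf{1}_A,X_t\rangle$ --- when $\Lambda<0$, convergence of $W_t$ is compatible with any behaviour of the local limsup, which is precisely why Theorem \ref{localextintion}(ii) requires the spine machinery rather than a ``routine'' transfer.

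The paper's key input, absent from your proposal, is distributional rather than spectral: by the branching property, $X_1(i)$ under $\mathbf{P}_{\epsilon\delta_i}$ is an infinitely divisible random variable on $[0,\infty)$, and (being non-degenerate) its law has unbounded support, so $\mathbf{P}_{\epsilon\delta_i}(X_1(i)>K)>0$ for \emph{every} $K$; monotonicity in the initial configuration (again the branching property) then makes this bound uniform over all configurations dominating $\epsilon\delta_i$. The paper also dissolves your ``random configuration'' worry by first reducing to singletons $A=\{i\}$: since $\limsup_t\langle\mathbf{1}_A,X_t\rangle$ is $0$ when every $\limsup_t X_t(i)$, $i\in A$, vanishes, and is $+\infty$ as soon as one of them is $+\infty$, the dichotomy for each of the finitely many singletons yields the dichotomy for $A$, and after this reduction the restart configuration automatically dominates $\epsilon\delta_i$. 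With these two ingredients your restart/Borel--Cantelli scheme closes; without the unbounded-support fact it does not.
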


\begin{proof}It is  enough to prove the lemma for $A=\{i\}$. The branching property implies that $X_1(i)$ is an infinitely divisible random variable and consequently, its distribution has unbounded support on $\R_+$, (see Chapter 2 in Sato \cite{SA}). Therefore, for all $\epsilon>0$,
\begin{align}
\label{edeltak}
\p_{\epsilon\delta_i}(X_1(i)>K)>0.
\end{align}
Let us denote by $\Omega_{0}$ the event $\limsup_{t\rightarrow \infty} X_t(i)>0$ and, for each $\epsilon>0$, denote by $\Omega_{\epsilon}$ the event $\limsup_{t\rightarrow \infty} X_t(i)>\epsilon$. Define the  sequence of stopping times as follows. 
On $\Omega_{\epsilon}$, let $T_0=\inf\{t> 0:X_t(i)\geq \epsilon\}$ and
 $T_{n+1}=\inf\{t>T_n+1:X_t(i)\geq \epsilon\}$ and for $\Omega_{\epsilon}^c$ 
 let $T_n=n$. Then, the $T_n$'s are finite stopping times on $\Omega_{\epsilon}$. 
Fix $K>0$ and let $A_n=\Omega_{\epsilon}\cap\{X_{T_n+1}(i)> K\}$ and $\Omega^1=\{\omega:\omega\in A_n\ \mbox{i.o.}\}$. Thus
by (\ref{edeltak}) and the strong Markov property,
$$\underset{n=1}{\overset{\infty}{\sum}}\p_{\mu}(A_n\mid X_{T_1},\cdots X_{T_n})=\infty \qquad \p_{\mu}\mbox{-a.s. on }\Omega_{\epsilon}.$$
By the extended Borel-Cantelli lemma [see Corollary 5.29 in \cite{Br}], $\p_{\mu}$-a.s. $\Omega_{\epsilon}\subset\Omega^1$. Observe that $\Omega_{\epsilon}\uparrow \Omega_0$ as $\epsilon\downarrow0$.  Therefore, for $K$ arbitrary large, $\limsup_{t\rightarrow \infty}X_t(i)\geq K$, $P_{\mu}$-a.s. on $\Omega_0$, and the claim is true. 
\end{proof}

%We call the semigroup $({M}(t):t\geq 0)$ \textit{non-singular}, if for any $\delta>0$, the skeleton semigroup $({M}(n\delta); n=0,1,\cdots)$ is  irreducible, i.e. for any $i,j\in \N$ there is a $n\in\N$ such that
% $$M(n\delta)_{ij}>0.$$
%When ${M}$ is irreducible and non-singular, we have that $({M}(t), t\geq 0)$ is $\Lambda$-recurrent if and only if $({M}(n\delta), n\geq 0)$  is ${\rm e}^{\Lambda \delta}$-recurrent for some (or equivalently, for all) $\delta>0$. (See \cite{se} for the discrete time theory). Niemi and Nummelin (\cite{ninu}, Corollary 6.3) obtained this result as a corollary

%\begin{proposition}
%Assume that the semigroup $({M}(t);t\geq 0)$ is non singular and $\Lambda$-recurrent. Let ${x}$ and ${y}$ be the right and left invariant $\Lambda$-vectors. Then $({M}(t);t\geq 0)$ can be decomposed as
%$$M(t)_{ij}=M^0(t)_{ij}+{\rm e}^{-\Lambda t}m(t)_{ij} $$
%where, for any $i,j\in \N$, $m(t)_{ij}$ is continuous in $t$,
%$$\underset{t\rightarrow\infty}{\lim}m(t)_{ij}=\left(\underset{k\in\N}{\sum} x_k y_k\right)^{-1}x(i) y_j,$$
%$\left(\underset{k\in\N}{\sum} x_k y_k\leq \infty\right)$ and
%$$\int_0^{\infty} {\rm e}^{\Lambda t}M^0(t)_{ij}\; \ud t<\infty.$$
%In particular, for any $i,j\in \N$,
%$$\underset{t\rightarrow\infty}{\lim}{\rm e}^{\Lambda t}M(t)_{ij} =\left(\underset{k\in\N}{\sum} x_k y_k\right)^{-1}x(i) y_j,$$
%\end{proposition}

Recall that we say that $X$ under $\mbf{P}_{\mu}$ \textit{exhibits local extinction} for the finite set $A\subset\mathbb{N}$ if 
$$\mbf{P}_{\mu}\left(\underset{t\uparrow\infty}{\lim} \langle \mathbf{1}_A, X_t\rangle=0\right)=1.$$
Now, we have all the preliminary results needed for the Proof of Theorem \ref{localextintion}

\begin{proof}[Proof of Theorem \ref{localextintion}]
(i)
 Let $0\leq\Lambda$. By Propositions \ref{subinvariant} and \ref{invariant}, there exists ${x}$ a positive right subinvariant $\Lambda$-vector. Proposition \ref{martingale} yields that $W_t= {\rm e}^{\Lambda t}\langle {x},{X}_t\rangle $ is a non-negative supermartingale. By Doob's convergence theorem, there is a non-negative finite random variable $W$ such that a.s.
 $$W_t\longrightarrow W \qquad \mbox{ as }\ t\rightarrow \infty.$$
When $\Lambda>0$, since 
 ${\rm e}^{\Lambda t}\rightarrow \infty$ as  $t\rightarrow \infty,$
 and $x(i)>0$ for any $i\in \N$, we have that $\mbf{P}_{\mu}$-a.s.
 $\lim_{t\rightarrow\infty} X_t(i) =0$, and hence, $\mbf{P}_{\mu}$-a.s.,
 $\lim_{t\rightarrow\infty}\langle\mathbf{1}_A, X_t\rangle =0$. 
 When $\Lambda=0$, Lemma \ref{limsup} yields the claim.

 (ii)  Now suppose that $\Lambda<0$, using Lemma \ref{lambdainfinito} there exits $n\geq i$ such that $\Lambda^{[n]}<0$.
 Next, consider the conclusion of Theorem \ref{gamma}. 
  Let $\boldsymbol 1$ be $n$-dimensional vector whose entries are all $1$ and let $\boldsymbol 0$ be similarly defined. Note that  $\widetilde{L}^{[n]}\boldsymbol{1} = \boldsymbol{0}$ and hence, together with irreducibility of $\left.\pi\right|_{[n]}$, it follows that $(\eta, \mathbb{P}^{x}_\cdot)$ is ergodic. As a consequence, of the spine decomposition (\ref{spine}), we now see that, $\widetilde{\mathbf{P}}^{[n]}_{\mu}$-almost surely,  mass is deposited by $\eta$ infinitely often in state $i$. Thanks to the assumption (\ref{globalxlogx}) and Theorem \ref{mgcgcethrm}, we have that $\widetilde{\mathbf{P}}^{[n]}_{\mu}\ll \mathbf{P}_{\mu}$ and hence there is no local extinction.

Next, recall that for a finite set of types $A\subset\mathbb{N}$ $$v_A(i)=-\log \mbf{P}_{\delta_i}(\mca{L}_A).$$ It is a trivial consequence of the fact that $\mathcal{E}\subseteq\mathcal{L}_A$ that $v_A(i)\leq w(i)$, $i\in\N$.
 By independence, it follows that, for all finite ${\mu}\in\mathcal{M}(\mathbb{N})$, 
$$\mbf{P}_{\mu}(\mca{L}_A)=\exp\left\{-\langle v_A,{\mu}\rangle \right\}, \qquad t\geq 0.$$
  By conditioning the event $\mca{L}_A$ on $\mca{F}_t,$ we obtain that for all $t\geq 0$,
\begin{align}\label{ecw}
 \mbf{E}_{\mu}({\rm e}^{-\langle v_A,{X_t}\rangle})=\exp\{-\langle v_A,{\mu}\rangle\}.
 \end{align}
 Now recalling \eqref{ecv}, $v_A$ must satisfy the semigroup evolution, see
\begin{align*}
 \psi(i,v_A(i))+\phi(i,v_A)=0.
\end{align*}
Formally speaking, to pursue the reasoning, we need $v_A$ to be a bounded vector, but this is not necessarily the case. To get round this problem, we can define $v_A^K(i) = K\wedge v_A(i)$, $i\in\N$, and observe by monotonicity and continuity that $V_tv_A^K(i)\uparrow v_A(i)$, $i\in\N, t\geq0$, as $K\uparrow\infty$. When seen in the context of \eqref{ecv} (also using continuity and monotonicity), the desired reasoning can be applied.
%but it cannot be the zero solution. Therefore uniqueness of solutions with strictly positive entries forces $v_A = w$. In particular, $v_A$ does not depend on $A$.
\end{proof}
\begin{proof}[Proof of Lemma \ref{global}] The proof that $w$ solves (\ref{fixedpoint}) is the same as the proof of \eqref{ecw}. 
\end{proof}

\section{Examples}\label{sect:examples}
This section is devoted to some examples, where we  find explicitly the global and local extinction probabilities. First we start with a remark of Kingman (see \cite{kingman}).

\begin{proposition}\label{king}
Let $P_{ij}(t)$ be the transition probabilities of an irreducible continuous-time Markov chain on the countable state space $E$. Then there exists $\kappa\geq 0$ such that for each $i,j\in E$, 
$$t^{-1}\log(P_{ij}(t))\rightarrow -\kappa.$$
Moreover, for each $i\in E$ and $t>0$
$$P_{ii}(t)\leq {\rm e}^{-\kappa t}$$
and there exist finite constants $K_{ij}$ such that
$$P_{ij}(t)\leq K_{ij} {\rm e}^{-\kappa t}, \qquad \mbox{for all } i,j\in E, \quad t>0.$$
If $Q=(q_{ij})$ is the associated $Q$-matrix, then
$$\kappa \leq -\sup\{q_{ii}: i\in E\}.$$
\end{proposition}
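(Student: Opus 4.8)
The plan is to reduce the whole statement to the subadditivity of the diagonal functions $t\mapsto -\log P_{ii}(t)$ and then to transport the resulting decay rate off the diagonal using irreducibility. First I would record the two consequences of the Chapman--Kolmogorov equations that drive everything: for any state $i$ and any $s,t\geq 0$,
$$P_{ii}(s+t)=\sum_{k\in E}P_{ik}(s)P_{ki}(t)\geq P_{ii}(s)P_{ii}(t),$$
so that $g_i(t):=-\log P_{ii}(t)$ is subadditive and non-negative (since $P_{ii}(t)\leq 1$). By Fekete's subadditive lemma the limit $\kappa_i:=\lim_{t\to\infty}g_i(t)/t$ exists and equals $\inf_{t>0}g_i(t)/t\geq 0$; in particular $g_i(t)\geq \kappa_i t$, which is exactly $P_{ii}(t)\leq {\rm e}^{-\kappa_i t}$. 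This step relies on $P_{ii}(t)>0$ for every $t>0$, which I would deduce from the elementary bound $P_{ii}(t)\geq {\rm e}^{-q_i t}$, where $q_i:=-q_{ii}$ and the right-hand side is the probability of making no jump out of $i$ before time $t$. The same inequality gives $g_i(t)/t\leq q_i$, hence $\kappa_i\leq q_i$.

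Next I would prove that $\kappa_i$ is independent of $i$. The key input is the Lévy dichotomy for Markov chains, which upgrades irreducibility to strict positivity $P_{ij}(t)>0$ for \emph{all} $t>0$ and all $i,j\in E$. Fixing $i,j$ and any $s,u>0$, Chapman--Kolmogorov yields
$$P_{ii}(s+t+u)\geq P_{ij}(s)P_{jj}(t)P_{ji}(u),$$
so $g_i(s+t+u)\leq g_j(t)-\log P_{ij}(s)-\log P_{ji}(u)$; dividing by $t$ and letting $t\to\infty$ gives $\kappa_i\leq\kappa_j$, and by symmetry $\kappa_i=\kappa_j=:\kappa$. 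The off-diagonal convergence then follows by sandwiching: from $P_{jj}(u+t)\geq P_{ji}(u)P_{ij}(t)$ I obtain $\liminf_{t\to\infty}t^{-1}(-\log P_{ij}(t))\geq\kappa$, while from $P_{ij}(t+s)\geq P_{ii}(t)P_{ij}(s)$ I obtain the matching $\limsup\leq\kappa$, so $t^{-1}\log P_{ij}(t)\to -\kappa$.

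The remaining assertions drop out of the same inequalities. For the uniform bound, $P_{ii}(t+s)\geq P_{ij}(t)P_{ji}(s)$ rearranges to $P_{ij}(t)\leq P_{ii}(t+s)/P_{ji}(s)\leq ({\rm e}^{-\kappa s}/P_{ji}(s)){\rm e}^{-\kappa t}$ for any fixed $s>0$ (with $P_{ji}(s)>0$ automatic by the dichotomy), so $K_{ij}:={\rm e}^{-\kappa s}/P_{ji}(s)$ does the job. Finally, since $\kappa=\kappa_i\leq q_i=-q_{ii}$ for every $i$, taking the infimum over $i$ gives $\kappa\leq\inf_i(-q_{ii})=-\sup_i q_{ii}$. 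I expect the only genuinely delicate point to be the passage from irreducibility to the everywhere-positivity $P_{ij}(t)>0$, i.e.\ the Lévy dichotomy; once that is in hand, every other step is an elementary manipulation of the semigroup inequalities combined with Fekete's lemma.
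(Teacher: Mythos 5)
Your proof is correct, but note that the paper does not prove this proposition at all: it states it as a remark of Kingman and defers entirely to \cite{kingman}. What you have written is essentially a reconstruction of Kingman's original argument --- diagonal subadditivity $P_{ii}(s+t)\geq P_{ii}(s)P_{ii}(t)$ plus Fekete's lemma to produce $\kappa_i$ and the bound $P_{ii}(t)\leq {\rm e}^{-\kappa_i t}$, then Chapman--Kolmogorov sandwiches to identify all the $\kappa_i$, transport the decay off the diagonal, and extract the constants $K_{ij}$ --- so the two routes coincide in substance. Two refinements are worth recording. First, the point you single out as the only delicate one, the L\'evy dichotomy ($P_{ij}(t)>0$ for \emph{all} $t>0$), is not actually needed anywhere: each of your displays only uses \emph{some} $s$ with $P_{ij}(s)>0$ and \emph{some} $u$ with $P_{ji}(u)>0$, which is exactly what irreducibility provides, together with strict positivity on the diagonal, which you already have from $P_{ii}(t)\geq {\rm e}^{-q_i t}$ (or, for any standard transition function, from $P_{ii}(t)\geq P_{ii}(t/n)^n$ and $P_{ii}(s)\to 1$ as $s\downarrow 0$); positivity of $P_{ij}(t)$ for all large $t$, needed so that the $\limsup$ statement makes sense, then follows from $P_{ij}(t)\geq P_{ii}(t-s)P_{ij}(s)$. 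Second, Fekete's lemma for a continuous-parameter subadditive function requires some regularity (measurability or local boundedness) to exclude pathological subadditive functions; here that is supplied by the continuity of $t\mapsto P_{ii}(t)$, and your argument should say so explicitly.
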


Observe that if the Markov chain is recurrent then $\kappa=0$. When it is transient, $\kappa$ could be greater than 0. In this case, we will say that the chain is geometrically transient with $\kappa$ its decay parameter. Kingman provided a random walk example where $\kappa>0$. The example is the following. Let $\xi$ a random walk with $Q$-matrix given by
$$q_{i,i=1}=p, \qquad q_{ii}=-1,\qquad q_{i,i-1}=q=1-p,$$
where $p\in(0,1)$. Then, $\xi$ is an irreducible process with decay parameter $\kappa=1-2\sqrt{pq}$. In particular, the process is geometrically transient except when $p=1/2$.

\bigskip

\noindent Now, we can provide some examples.

\bigskip

\noindent{\bf Example 1.}
If $\psi$, $\beta$, $d$ and $n$ don't depend on the underlying type, it is easy to show that $(\langle 1,X_t\rangle, t\geq 0)$ is a CSBP with branching mechanism given by
$$\widetilde{\psi}(z)=\left(b-\beta d-\beta \int_0^{\infty} u {\rm n}(\ud u)\right)z+cz^2+\int_0^{\infty}({\rm e}^{-zu}-1+zu)(\ell+\beta {\rm n})(\ud u), \qquad  z\geq0.$$
In this case, the global extinction probability is given by
$$\mbf{P}_{\delta_i}(\mca{E})={\rm e}^{-\widetilde{\Phi}(0)},$$
where $\widetilde{\Phi}(0)=\sup\{z\leq 0:\widetilde{\psi}(z)=0\}$.

Define $a=\beta d+\beta \int_0^{\infty} u {\rm n}(\ud u)$, then, our process $X$ has global extinction a.s. if and only if $b-a\geq 0$.
On the other hand, let $(\xi,\p_i)$ be an irreducible chain with $Q$-matrix given by
$$Q_{ij}=a(\pi_i(j)-\delta_{i=j}).$$
Then, by equation \eqref{f-k}, the linear semigroup of $X$ is
$$M_tf(i)=\e_i\left[f(\xi_t)\exp\left\{\int_0^t (a-b)(\xi_s){\rm d}s\right\}\right].$$
 In particular,
$$H_{ij}(\lambda)=\int_0^{\infty}{\rm e}^ {(\lambda +a-b)t}P_{ij}(t){\rm d}t.$$
If $(\xi,\p_i)$ is geometrically transient, then $\kappa\in(0,a)$ and $\lambda<b-a+\kappa$  implies $H_{ij}(\lambda)<\infty$. In particular if $a-\kappa<b$, the spectral radius of $M$ satisfies $\Lambda >0$ and, by Theorem \ref{localextintion}, $X$ presents local extinction a.s.

In summary, if $a-\kappa<b<a$ then the process presents local extinction a.s. but global extintion with probability less than one. 

\bigskip
\noindent{\bf Example 2.} Define $a(i)=\beta(i) d(i)+\beta(i)  \int_0^{\infty} u {\rm n}(i,\ud u)$.
Suppose that there exists a constant $c>0$ such that $b(i)-a(i)\geq c>0$. Let  $(\xi,\p_i)$ the associated irreducible chain in Lemma \ref{irreducible}. Let $0\leq \lambda <c$. By equation (\ref{f-k}) we have
$$H_{ij}(\lambda)=\int_0^{\infty} {\rm e}^{\lambda t} \e_i\left[\delta_j({\xi_t})\exp\left\{\int_0^t (a-b)(\xi_s){\rm d}s\right\}\right]{\rm d}t\leq \int_0^{\infty} {\rm e}^{(\lambda-c) t}{\rm d}t<\infty. $$
Then, $ \Lambda>0$ and the process presents local extinction a.s.

\bigskip

\noindent{\bf Example 3.} Suppose now that there exists a constant $c>0$ such that $b(i)-a(i)\leq -c<0$ and $(\xi,\p_i)$ is a recurrent Markov chain. Then, for $-c<\lambda<0$,
$$H_{ij}(\lambda )=\int_0^{\infty} {\rm e}^{\lambda t} \e_i\left[\delta_j(\xi_t)\exp\left\{\int_0^t (a-b)(\xi_s){\rm d}s\right\}\right]{\rm d}t\geq \int_0^{\infty}P_{ij}(t){\rm d}t =\infty. $$
It follows that $\Lambda<0$. If 
$$\sup_{i\in\mathbb{N}}\int_1^\infty (x\log x) \ell(i,{\rm d}x)  + \sup_{i\in\mathbb{N}}\int_1^\infty (x\log x) {\rm n}(i,{\rm d}x) <\infty,$$
then the process presents local extinction in each bounded  subset of $\N$ with probability less than one.
\section{Appendix}

We provide here a technical lemma pertaining to an extended version of the Feynman-Kac formula that is used in the main body of the text. Note that similar formulae have previously appeared in the literature e.g. in the work of Chen and Song \cite{CS}.

\begin{lemma}\label{nonlocal}
Let $(\xi_t, \p)$ be a Markov chain on a finite state space $E$ with $Q$ matrix ${Q}=(q_{ij})_{i,j\in E}$.  Let  $v:E\times\R_+\rightarrow\R$ be a measurable function and $F:E\times E\times\R_+\rightarrow \R$ be a Borel function vanishing on the diagonal of $E$. 
  For $i\in E$ and $t\geq 0$ and $f:E\rightarrow \R$, define 
  $$h(i,t):={\mathcal T}_t[f](i)=\e_i\left[f(\xi_t)\Exp{\int_0^t v(\xi_s,t-s)\ud s}\Exp{\sum_{s\leq t}F(\xi_{s-},\xi_s,t-s)}\right].$$
  Then ${\mathcal T}_t$ is a semigroup and for each $(i,t)\in E\times\R_+$, $h$ satisfies
  \begin{align}\label{feynmankac no local}
  h(i,t)=&\e_i\left[f(\xi_t)\right]+\e_i\left[\int_0^t h(\xi_s,t-s)v(\xi_s,t-s)\ud s\right]\nonumber\\
&+\e_i\left[\int_0^t \sum_{j\in E}h(j,t-s)({\rm e}^{F(\xi_s,j,t-s)}-1)q_{\xi_s, j}\ud s\right].
  \end{align}
  Moreover, if $v$ and $F$ do not depend on $t$, the semigroup has infinitesimal generator matrix ${P}$ given by,  
   \begin{align}\label{infge}
   p_{ij}=q_{ij}{\rm e}^{F(i,j)}+v(i)\Indi{i=j}.
   \end{align}
\end{lemma}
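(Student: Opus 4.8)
The plan is to reduce every assertion to the elementary renewal structure of the finite-state chain $\xi$. Write $q_i=-q_{ii}=\sum_{j\neq i}q_{ij}$ for the total rate out of $i$, and recall that under $\p_i$ the first jump time is exponential with parameter $q_i$, the chain then moving to $j$ with probability $q_{ij}/q_i$. The only genuinely probabilistic input is a first-jump decomposition of $h(i,t)=\mathcal{T}_t[f](i)$: conditioning on the first jump time $r$ and the post-jump state $j$, and using that $\int_0^t v(\xi_s,t-s)\,\ud s$ and $\sum_{s\le t}F(\xi_{s-},\xi_s,t-s)$ split additively across the first holding interval and the remainder, the strong Markov property yields the renewal equation
\[
h(i,t)={\rm e}^{-q_i t}{\rm e}^{\int_0^t v(i,t-u)\,\ud u}f(i)+\int_0^t {\rm e}^{-q_i r}{\rm e}^{\int_0^r v(i,t-u)\,\ud u}\sum_{j\neq i}q_{ij}{\rm e}^{F(i,j,t-r)}h(j,t-r)\,\ud r.
\]
All three claims are extracted from this identity.

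Consider first the time-homogeneous case $v(i,\tau)=v(i)$, $F(i,j,\tau)=F(i,j)$. After the substitution $r\mapsto t-r$, which pulls the $t$-dependence out of the convolution, I would differentiate the renewal equation in $t$ and then re-insert the renewal equation itself to collapse the boundary terms; this yields the backward equation $\partial_t h(i,t)=\sum_j p_{ij}h(j,t)$ with $p_{ij}=q_{ij}{\rm e}^{F(i,j)}+v(i)\Indi{i=j}$, where one uses $q_{ii}=-q_i$ together with ${\rm e}^{F(i,i)}=1$ (as $F$ vanishes on the diagonal) to absorb the $j=i$ contribution into the sum. Evaluating at $t=0$, where $h(\cdot,0)=f$, gives $\frac{\ud}{\ud t}\mathcal{T}_t[f](i)\big|_{t=0}=\sum_j p_{ij}f(j)$, which is precisely \eqref{infge}; and since $h(\cdot,t)={\rm e}^{tP}f$ solves this linear system, the semigroup identity $\mathcal{T}_{t+s}=\mathcal{T}_t\mathcal{T}_s$ follows. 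Directly from the definition, this semigroup property is equally a consequence of the Markov property and the additivity (cocycle property) of the two exponents; for time-dependent $v,F$ it survives in the form of the propagator statement that $\exp\{\int_0^s v(\xi_u,t-u)\,\ud u+\sum_{u\le s}F(\xi_{u-},\xi_u,t-u)\}\,h(\xi_s,t-s)$ is a $\p_i$-martingale on $[0,t]$.

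Finally, \eqref{feynmankac no local} is nothing but the mild (Duhamel) form of this backward equation. Splitting $P=Q+R$ with $(Rg)(i)=v(i)g(i)+\sum_j({\rm e}^{F(i,j)}-1)q_{ij}g(j)$, variation of constants gives $h(\cdot,t)={\rm e}^{tQ}f+\int_0^t {\rm e}^{(t-s)Q}R\,h(\cdot,s)\,\ud s$; rewriting ${\rm e}^{tQ}f=\e_\cdot[f(\xi_t)]$ and ${\rm e}^{(t-s)Q}g=\e_\cdot[g(\xi_{t-s})]$ and substituting $s\mapsto t-s$ converts this into exactly \eqref{feynmankac no local}, the term $\int_0^t\sum_j h(j,t-s)({\rm e}^{F(\xi_s,j,t-s)}-1)q_{\xi_s,j}\,\ud s$ being the compensator of the jump part of the multiplicative functional. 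The time-dependent case runs identically, with $Q$, $R$ and $P$ carrying the remaining-time argument $t-s$ throughout. The only real obstacle I anticipate is the careful differentiation of the renewal equation in this time-dependent setting, where $t$ enters the integrand simultaneously through $v(i,t-u)$, $F(i,j,t-r)$, $h(j,t-r)$ and the limits of integration; keeping the remaining-time arguments consistent is the delicate bookkeeping, whereas the interchange of differentiation with $\e_i$ and all convergence issues are immediate because $E$ is finite and the rates are bounded.
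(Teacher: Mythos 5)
Your proposal is correct, but it takes a genuinely different route from the paper. The paper never forms a renewal equation and never differentiates anything: it sets $A_{s,t}:=\int_s^t v(\xi_r,t-r)\,\ud r+\sum_{s<r\leq t}F(\xi_{r-},\xi_r,t-r)$ and starts from the exact pathwise identity
\begin{equation*}
{\rm e}^{A_{0,t}}-1=\int_0^t v(\xi_s,t-s)\,{\rm e}^{A_{s,t}}\,\ud s+\sum_{s\leq t}{\rm e}^{A_{s-,t}}\bigl({\rm e}^{F(\xi_{s-},\xi_s,t-s)}-1\bigr),
\end{equation*}
then multiplies by $f(\xi_t)$, takes $\e_i$, uses the Markov property to replace the resulting conditional expectations by $h(\xi_s,t-s)$ and $h(j,t-s)$, and finally applies the L\'evy formula
\begin{equation*}
\e_i\Bigl[\sum_{s\leq t}G(\xi_{s-},\xi_s,s)\Bigr]=\e_i\Bigl[\int_0^t\sum_{y\in E}G(\xi_s,y,s)\,q_{\xi_s,y}\,\ud s\Bigr]
\end{equation*}
to convert the jump sum into the $q$-integral; this yields \eqref{feynmankac no local} directly, even for time-dependent $v,F$, and \eqref{infge} is read off afterwards. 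You go in the opposite direction: first-jump decomposition, differentiation to get the backward Kolmogorov equation and hence \eqref{infge}, then recovery of \eqref{feynmankac no local} as the Duhamel (mild) form of that equation --- your jump term being exactly the compensator the paper produces via the L\'evy formula. What the paper's route buys: no differentiability of $h$ is ever required (everything stays in integrated form), and the argument transfers verbatim to general Markov processes admitting a L\'evy system, which is the Chen--Song setting this lemma imitates. What your route buys: on a finite state space it is more elementary and self-contained (the L\'evy formula for a chain is itself usually proved by precisely your compensator/first-jump argument), it identifies the generator directly rather than by inspection of an integral equation, and you are more careful than the paper on one point, namely that for time-dependent $v,F$ the family $\mathcal{T}_t$ is only a propagator (your martingale formulation), not literally a semigroup. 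Two details to nail down in a write-up: in the time-dependent case your substitution $r\mapsto t-r$ turns the renewal equation into $h(i,t)={\rm e}^{-q_it+\int_0^tv(i,w)\,\ud w}f(i)+\int_0^t{\rm e}^{-q_i(t-r)+\int_r^tv(i,w)\,\ud w}\sum_{j\neq i}q_{ij}{\rm e}^{F(i,j,r)}h(j,r)\,\ud r$, so $t$ no longer enters through $F$ or $h$ and the bookkeeping you worry about is painless; and since $v,F$ are only measurable, $h(i,\cdot)$ is merely absolutely continuous, so the backward equation holds almost everywhere --- which is all that variation of constants requires.
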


\begin{proof}
The Markov property implies the semigroup property. For each $0\leq s\leq t$ define 
$$A_{s,t}:=\int_s^t v(\xi_r,t-r)\ud r\sum_{s<r\leq t}F(\xi_{r-},\xi_r,t-r).$$
Then,
\begin{align*}
{\rm e}^{A_{0,t}}-{\rm e}^{A_{t,t}}&=\int_0^t v(\xi_s,t-s){\rm e}^{A_{s,t}}\ud s+\sum_{s\leq t}{\rm e}^{A_{s-,t}}({\rm e}^{F(\xi_{s-},\xi_s,t-s)}-1).
\end{align*}
This implies,
\begin{align*}
h(i,t)&=\e_i\left[f(\xi_t)\right]+\e_i\left[\int_0^t f(\xi_t) v(\xi_s,t-s){\rm e}^{A_{s,t}}\ud s\right]+\e_i\left[\sum_{s\leq t}f(\xi_t){\rm e}^{A_{s-,t}}({\rm e}^{F(\xi_{s-},\xi_s,t-s)}-1)\right].
\end{align*}
By the Markov property
\small{\begin{align*}
h(i,t)=\e_i\left[f(\xi_t)\right]+\e_i\left[\int_0^t  v(\xi_s,t-s)h(\xi_s,t-s)\ud s\right]+\e_i\left[\sum_{s\leq t}h(\xi_s,t-s)({\rm e}^{F(\xi_{s-},\xi_s,t-s)}-1)\right].
\end{align*}}
The L\'evy formula says that for any nonnegative Borel function $G$ on $E\times E\times \R_+$ vanishing on the diagonal and any $i\in E$,
$$\e_i\left[\sum_{s\leq t}G(\xi_{s-},\xi_s,s)\right]=\e_i\left[\int_0^t\sum_{y\in E}G(\xi_s,y,s)q_{\xi_s,y}\ud s\right].$$
Therefore, $h$ satisfies (\ref{feynmankac no local}). Using this expression, we can obtain the infinitesimal matrix.
  \end{proof}

\section*{Acknowledgements}
Both authors would like to thank Yanxia Ren for comments on an early draft of this paper.
The majority of this research was carried out during the 10 month stay at the University of Bath of SP.  During this time she was supported in part by CONACyT-MEXICO grant 351643 and in part by a Global Research Scholarship Scheme through the University of Bath Internationalisation Relations Office. SP would like to express her gratitude for this support and the hospitality of the Department of Mathematical Sciences. Both authors, as members of a Bath-CIMAT research group pairing, were additionally supported by a Royal Society Advanced Newton Fellowship. In addition, S.P. like to acknowledge support from a Royal Society Newton International Fellowship.

\end{document}